\renewcommand{\@seccntformat}[1]
{{\csname the#1\endcsname}.\hspace{0.3em}}
\renewcommand{\section}{\@startsection
{section}
{1}
{0mm}
{-1.5\baselineskip}
{\baselineskip}
{\bfseries\normalsize}}
\renewcommand{\subsection}{\@startsection
{subsection}
{2}
{0mm}
{-\baselineskip}
{0.5\baselineskip}
{\normalsize\itshape}}
\renewcommand{\subsubsection}{\@startsection
{subsubsection}
{3}
{0mm}
{-.5\baselineskip}
{-2mm}
{\normalsize\itshape}}
\theoremstyle{plain}
\newtheorem*{theorem*}{Theorem}
\newtheorem{theorem}{Theorem}[section]
\newtheorem{lemma}{Lemma}[section]
\newtheorem{corollary}[lemma]{Corollary}
\newtheorem{prop}[lemma]{Proposition}
\newtheorem*{corollary*}{Corollary}
\newtheorem*{CNT}{Courant's nodal domain theorem}
\newtheorem*{CST}{Cheng's structure theorem}
\theoremstyle{definition}
\newtheorem*{defin*}{Definition}
\newtheorem{defin}{Definition}[section]
\theoremstyle{remark}
\newtheorem*{remark*}{Remark}
\newtheorem*{quest*}{Question}
\DeclareMathAlphabet{\matheur}{U}{eur}{m}{n}
\DeclareMathAlphabet{\matheus}{U}{eus}{m}{n}
\DeclareMathAlphabet{\matheuf}{U}{euf}{m}{n}
\numberwithin{equation}{section}
\newcommand{\abs}[1]{\left\lvert#1\right\rvert}
\DeclareMathOperator{\dist}{dist}
\DeclareMathOperator{\ord}{ord}
\DeclareMathOperator{\Int}{Int}
\begin{document}

\author{Gerasim  Kokarev
\\ {\small\it Mathematisches Institut der Universit\"at M\"unchen }
\\ {\small\it Theresienstr. 39, D-80333 M\"unchen, Germany}
\\ {\small\it Email: {\tt Gerasim.Kokarev@mathematik.uni-muenchen.de}}
}

\title{On multiplicity bounds for Schrodinger eigenvalues on Riemannian surfaces}
\date{}
\maketitle

\begin{abstract}
\noindent
A classical result by Cheng in 1976, improved later by Besson and Nadirashvili, says that the multiplicities of the eigenvalues of the Schrodinger operator $(-\Delta_g+\nu)$, where $\nu$ is $C^\infty$-smooth, on a compact Riemannian surface $M$ are bounded in terms of the eigenvalue index and the genus of $M$. We prove that these multiplicity bounds hold for an $L^p$-potential $\nu$, where $p>1$. We also discuss similar multiplicity bounds for Laplace eigenvalues on singular Riemannian surfaces.
\end{abstract}

\medskip
\noindent
{\small
{\bf Mathematics Subject Classification (2010)}: 58J50, 35P99, 35B05. 

\noindent
{\bf Keywords}: Schrodinger equation, eigenvalue multiplicity, nodal set, Riemannian surface.}

%
%
%


\section{Introduction and statements of results}
\label{intro}
\subsection{Multiplicity bounds}
Let $M$ be a connected compact surface. For a Riemannian metric $g$ and a $C^\infty$-smooth function $\nu$ on $M$ we denote by
$$
\lambda_0(g,\nu)<\lambda_1(g,\nu)\leqslant\ldots\lambda_k(g,\nu)\leqslant\ldots
$$
the eigenvalues of the Schrodinger operator $(-\Delta_g+\nu)$. If $M$ has a non-empty boundary, we assume that the Dirichlet boundary condition is imposed.

The following theorem is an improved version of the statement originally discovered by Cheng~\cite{Cheng} in 1976. For closed orientable surfaces it is due to Besson~\cite{Be}, and for general closed surfaces due to Nadirashvili~\cite{Na}; multiplicity bounds for general boundary value problems have been obtained in~\cite{KKP}.
\begin{theorem}
\label{t0}
Let $(M,g)$ be a smooth compact surface, possibly with boundary. Then for any $C^\infty$-smooth function $\nu$ on $M$ the multiplicity $m_k(g,\nu)$ of an eigenvalue $\lambda_k(g,\nu)$ satisfies the inequality
$$
m_k(g,\nu)\leqslant 2(2-\chi-l)+2k+1, \qquad k=1,2,\ldots,
$$
where $\chi$ stands for the Euler-Poincare number of $M$ and $l$ is the number of boundary components.
\end{theorem}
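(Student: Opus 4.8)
The plan is to combine a jet‑dimension count for eigenfunctions with Courant's nodal domain theorem and Cheng's structure theorem, everything routed through a single Euler–characteristic inequality. Write $m=m_k(g,\nu)$. Since $2-\chi-l\geqslant0$ for a connected surface and $k\geqslant1$, the right–hand side is $\geqslant3$, so we may assume $m\geqslant4$; set $n=\lfloor m/2\rfloor\geqslant2$, so that $2n-1<m\leqslant2n+1$, and observe that it suffices to produce a $\lambda_k$‑eigenfunction whose nodal set is complicated enough to force $n\leqslant k+2-\chi-l$. To get it, fix an interior point $p$ and bound the dimension of the space of $(n-1)$‑jets at $p$ of solutions of $(-\Delta_g+\nu)w=\lambda w$: expanding such a solution as a formal series $\sum_{j\geqslant0}P_j$ of homogeneous pieces, the equation reads $\Delta P_{j+2}=-\sum_{a+b=j}(\lambda-\nu)_aP_b$, so each $P_{j+2}$ is determined by $P_0,\dots,P_j$ up to the $2$‑dimensional space of harmonic homogeneous polynomials of degree $j+2$; hence the $(n-1)$‑jet ranges over a space of dimension $\leqslant 1+2+2(n-2)=2n-1$. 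As $m>2n-1$, there is a nonzero $\lambda_k$‑eigenfunction $u$ vanishing to some order $\mu\geqslant n$ at $p$. By Cheng's structure theorem the leading homogeneous part of $u$ at $p$ is a nonzero harmonic polynomial of degree $\mu$, and correspondingly the nodal set $N$ of $u$ consists, near $p$, of exactly $2\mu$ equiangular analytic arcs; thus $p$ is a vertex of $N$ of degree $\geqslant2n$, and if $\partial M\neq\varnothing$ the Dirichlet condition puts $\partial M\subseteq N$.

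Assume first that $M$ is closed. By Cheng's structure theorem $N$ is a finite graph‑like set whose singular points all have even degree $2\mu_i\geqslant4$; I regard it as a genuine graph by placing a $2$‑valent vertex on each circle component carrying no singular point. Using additivity of the compactly supported Euler characteristic over $M=N\sqcup\Omega_1\sqcup\dots\sqcup\Omega_b$ (the $\Omega_i$ being the nodal domains) together with $\chi_c(\Omega_i)\leqslant1$ — valid because for $k\geqslant1$ every nodal domain is an open, non‑closed surface — one gets $\chi(M)=\chi(N)+\sum_i\chi_c(\Omega_i)\leqslant\chi(N)+b$, that is,
\[
\sum_i(\mu_i-1)\;=\;E(N)-V(N)\;\leqslant\;b-\chi(M).
\]
Since $p$ makes the left‑hand side at least $n-1$ and $b\leqslant k+1$ by Courant's theorem, this gives $n-1\leqslant(k+1)-\chi(M)$, i.e.\ $n\leqslant k+2-\chi-l$ with $l=0$, and together with $m\leqslant2n+1$ the bound follows in the closed case.

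When $\partial M\neq\varnothing$ the same computation carried out on $M$ only yields $2(2-\chi)+2k+1$, so the extra $-2l$ has to be manufactured. I would collapse each boundary component $\sigma_c$ to a point $z_c$, obtaining a closed surface $M'$ with $\chi(M')=\chi(M)+l$; the eigenfunction, being constant on each $\sigma_c$, descends to a continuous function on $M'$, and the images of the interior nodal arcs form a finite graph $N'$ in which $z_c$ has degree $d_c=\sum(\mu'-1)$, the sum over boundary singular points of order $\mu'$ on $\sigma_c$. A parity observation — the sign of $u$ just inside $\sigma_c$ flips precisely at boundary singular points of \emph{even} order, and the number of flips around the circle is even — shows $d_c$ is even; hence either $d_c\geqslant2$, in which case $z_c$ is a legitimate vertex contributing $d_c/2-1\geqslant0$, or $d_c=0$, in which case $z_c$ becomes an interior point of the compactified nodal domain abutting $\sigma_c$, and since $k\geqslant1$ rules out this domain being a disk meeting $\sigma_c$ along the entire circle, the capping keeps $\chi_c\leqslant1$. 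Thus $\sum_i\chi_c(\Omega_i)\leqslant b$ still holds on $M'$, and the Euler inequality now reads $n-1\leqslant\sum_i(\mu_i-1)+\sum_{d_c\geqslant2}(d_c/2-1)\leqslant b-\chi(M')=b-\chi-l\leqslant k+1-\chi-l$, giving $n\leqslant k+2-\chi-l$ as required.

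The delicate point throughout is the topological bookkeeping in the last two steps: a nodal domain need not be a manifold‑with‑boundary (the nodal set may pinch it at a singular point), and collapsing a boundary circle can raise the Euler characteristic of the adjacent domain. Both difficulties dissolve once one computes with the compactly supported Euler characteristic and its additivity rather than with naive CW‑counts; but pinning down the constant exactly — in particular obtaining the full $-2l$ and not merely $-l$ — hinges on the observation, valid only for $k\geqslant1$, that no nodal domain can be a disk abutting an entire boundary component of $M$.
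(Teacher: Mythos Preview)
Your argument is correct and follows essentially the same route as the paper: a dimension count on jets at a point $p$ produces an eigenfunction of vanishing order at least $n$ (this is the paper's Lemma~\ref{nn}), and then an Euler--characteristic inequality on the nodal graph in the closed surface obtained by collapsing each boundary circle to a point bounds the number of nodal domains from below by $n-1+\chi+l$ (this is the paper's Lemma~\ref{nnd}), which together with Courant yields the bound. Your use of the additivity of $\chi_c$ is equivalent to the paper's appeal to the graph-theoretic Euler inequality $f\geqslant e-v+\bar\chi$; your parity discussion for $d_c$ and your treatment of the case $d_c=0$ are exactly the checks that make the paper's assertion ``each boundary component vertex has degree at least two'' legitimate, so the two arguments match in substance.
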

Above we assume that $l=0$ for closed surfaces. Mention that even the fact that eigenvalue multiplicities on Riemannian surfaces are bounded is by no means trivial, and as is known~\cite{CDV86,CDV87}, fails in higher dimensions, unless some specific hypotheses on a Riemannian metric or a potential are imposed. The key ingredient in the proof of Theorem~\ref{t0} is the so-called {\em Cheng's structure theorem}~\cite{Cheng}: for any solution $u$ to the Schrodinger equation with a smooth potential and any interior point $p\in M$ there exists a neighbourhood of $p$ and its diffeomorphism onto a ball in $\mathbf{R}^2$ centred at the origin that maps the nodal set of $u$ onto the nodal set of a homogeneous harmonic polynomial. This statement is based on a local approximation of solutions by harmonic homogeneous polynomials~\cite{Bers}, and in particular, implies that the nodal set of a solution $u$ is locally homeomorphic to its tangent cone. The latter property of nodal sets does not hold in higher dimensions, see~\cite{BM}. 
The structure theorem holds for sufficiently smooth solutions to the Schrodinger equation, see Appendix~\ref{stru}, and consequently, the multiplicity bounds in Theorem~\ref{t0} hold for Holder continuous potentials. Based on Cheng's structure theorem, the multiplicity bounds for various eigenvalues problems have been extensively studied in the literature. We refer to the papers~\cite{CDV87,HoNa2,HoNa,KKP} and references there for the details.

The purpose of this paper is to show that the multiplicity bounds continue to hold for rather weak potentials when no similar structure theorem for nodal sets is available. For a given real number $\delta\in(0,2)$ we consider the class $K^{2,\delta}(M)$, introduced in~\cite{AS,Si}, formed by absolutely integrable potentials $\nu$ such that
\begin{equation}
\label{pot}
\sup_{x\in M}\int\limits_{B(x,r)}\abs{{x-y}}^{-\delta}\abs{\nu(y)}dVol_g(y)\to 0\qquad\text{as}\quad r\to 0,
\end{equation}
where the absolute value $\abs{x-y}$ above denotes the distance between $x$ and $y$ in the background metric $g$.  It is a straightforward consequence of the Holder inequality that any $L^p$-integrable function with $p>1$ belongs to $K^{2,\delta}$ for some positive $\delta$. However, unlike the traditional $L^p$-hypothesis the potentials from $K^{2,\delta}(M)$ include certain physically important cases, see~\cite{AS,Si}.

The hypothesis that $\nu\in K^{2,\delta}(M)$ implies that the measures $d\mu^\pm=\nu^\pm\mathit{dVol}_g$, where $\nu^+$ and $\nu^-$ are positive and negative parts of $\nu$ respectively,  are $\delta$-uniform:
$$
\mu^\pm(B(x,r))\leqslant Cr^\delta,\qquad\text{for any}\quad r>0\text{ ~and~ }x\in M, 
$$
and some constant $C$. By the results of Maz'ja~\cite{Ma}, see also~\cite{GK}, for such measures $\mu^\pm$ the Sobolev space $W^{1,2}(M,\mathit{Vol}_g)$ embeds compactly into  $L_2(M,\mu^\pm)$. By standard perturbation theory~\cite{Kato}, see also~\cite{Ma,Si}, we then conclude that the spectrum of the Schrodinger operator $(-\Delta_g+\nu)$ is discrete, bounded from below, and all eigenvalues have finite multiplicities. Our main result says that they satisfy the same multiplicity bounds.
\begin{theorem}
\label{t1}
Let $(M,g)$ be a smooth compact surface, possibly with boundary. Then for any absolutely integrable potential $\nu$ from $K^{2,\delta}(M)$, where $\delta\in (0,2)$, the multiplicity $m_k(g,\nu)$ of an eigenvalue $\lambda_k(g,\nu)$ satisfies the inequality
$$
m_k(g,\nu)\leqslant 2(2-\chi-l)+2k+1, \qquad k=1,2,\ldots,
$$
where $\chi$ stands for the Euler-Poincare number of $M$ and $l$ is the number of boundary components.
\end{theorem}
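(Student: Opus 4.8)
The plan is to run the classical Cheng--Besson--Nadirashvili argument behind Theorem~\ref{t0} and to isolate the two places where it uses regularity of $\nu$, replacing each by a statement that survives for $\nu\in K^{2,\delta}(M)$. That argument has three ingredients: (A) Courant's nodal domain theorem --- an eigenfunction of $\lambda_k(g,\nu)$ has at most $k+1$ nodal domains; (B) a description of the nodal set $Z(u)$ of an eigenfunction near a zero, namely that $Z(u)$ is locally a finite graph, homeomorphic to its tangent cone, with exactly $2N$ arcs issuing from a zero of order $N$, together with the fact that ``$\mathrm{ord}_p u\ge N$'' is a linear condition on the eigenspace of rank at most $2N-1$ (the span of harmonic homogeneous polynomials of degree $<N$ in $\mathbf{R}^2$); and (C) a purely combinatorial Euler-characteristic count. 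Only (A) and (B) see the potential, and it is worth noting that even for smooth $\nu$ it is the \emph{harmonic} coefficient count in (B), due to Besson, and not the cruder Taylor count, that makes the bound linear in $k$; we will recover precisely this count.

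First I would record the regularity theory for $\nu\in K^{2,\delta}(M)$. Since the measures $\nu^{\pm}dVol_g$ are $\delta$-uniform, De Giorgi--Nash--Moser iteration (or the Aizenman--Simon kernel bounds, cf.~\cite{AS,Si,Ma}) shows that every eigenfunction $u$ is bounded and continuous on $M$, so that nodal sets are closed and nodal domains open; moreover $u\,\mathbf 1_{\Omega}\in W^{1,2}(M)$ for each nodal domain $\Omega$, and testing the equation against it gives the Schr\"odinger quadratic form identity $\int_{\Omega}(|\nabla u|^2+\nu u^2)=\lambda_k(g,\nu)\int_{\Omega}u^2$. Combined with the weak unique continuation property for $-\Delta_g+\nu$ with a potential in this scale-invariant class (via Carleman estimates adapted to $K^{2,\delta}$, the strict subcriticality $\delta>0$ being essential), this yields (A): Courant's argument applies verbatim, since if an eigenfunction of $\lambda_k(g,\nu)$ had at least $k+2$ nodal domains, the zero extensions of its restrictions to $k+1$ of them would span a $(k+1)$-dimensional subspace of $W^{1,2}(M)$ on which the form equals $\lambda_k(g,\nu)\lVert\cdot\rVert^2$; by the variational principle this subspace would contain an eigenfunction of $\lambda_k(g,\nu)$ vanishing on a non-empty open set, contradicting weak unique continuation.

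The substitute for Cheng's structure theorem, ingredient (B), is the heart of the matter and the place where hypothesis~(\ref{pot}) is genuinely used. Writing the equation as $\Delta_g u=(\nu-\lambda)u$ with $\nu-\lambda\in K^{2,\delta}(M)$, I would attach to $u$ and an interior point $p$ Almgren's frequency function $\mathcal N(r)=rD(r)/H(r)$ with $D(r)=\int_{B(p,r)}|\nabla u|^2dVol_g$ and $H(r)=\int_{\partial B(p,r)}u^2$, and show, using (\ref{pot}) --- precisely, that the weight $|x-y|^{-\delta}$ is exactly what is needed to absorb the potential term $\int_{B(p,r)}|\nu-\lambda|u^2$ against the frequency --- that $\mathcal N$ is almost monotone. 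Then $N(p):=\lim_{r\to0}\mathcal N(r)$ exists and is finite; the blow-ups $x\mapsto u(\exp_p(rx))H(r)^{-1/2}$ are bounded in $W^{1,2}_{loc}$, and because the rescaled potentials $r^{2}(\nu-\lambda)(\exp_p(r\cdot))$ tend to zero in the relevant seminorm they converge in $C^0_{loc}$ to a homogeneous harmonic polynomial $P_{N(p)}$; in particular $N(p)\in\mathbf{Z}_{\ge0}$. From this I would extract: (a) the map sending $u$ to the limits $\lim_{r\to0}r^{-j}(\text{$j$-th }S^1\text{-Fourier coefficient of }u\vert_{\partial B(p,r)})$, $j<N$, is linear on the eigenspace, of rank at most $2N-1$, with kernel exactly $\{\mathrm{ord}_p u\ge N\}$; and (b) for all small $r$, $u\vert_{\partial B(p,r)}$ has exactly $2N(p)$ sign changes, and (by a two-dimensional B\'ezout/frequency-drop estimate bounding $\sum_q(N(q)-1)$ on compacta) only finitely many points of any compact set have order $\ge2$, so $Z(u)$ is locally a finite graph, homeomorphic near $p$ to a cone over $2N(p)$ points. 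Boundary points are handled the same way after an odd reflection across the Dirichlet boundary, which is where the number $l$ of boundary components enters.

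Finally, (C) is formal and identical to Theorem~\ref{t0}. If the eigenspace of $\lambda_k(g,\nu)$ has dimension $m$, then by (a), choosing an interior point $p$ and $N=\lfloor m/2\rfloor$ (so $2N-1<m$) produces a non-zero eigenfunction $u$ with $\mathrm{ord}_p u\ge\lfloor m/2\rfloor$. Viewing $Z(u)$ as a graph on $M$ with the singular points as vertices of degree $2\,\mathrm{ord}_q u$ (dummy vertices on singular-point-free loops), one has $V-E=-\sum_{q}(\mathrm{ord}_q u-1)$ and, by Euler's inequality $V-E+F\ge\chi$ together with Courant's bound $F\le k+1$ from (A), $\sum_{q}(\mathrm{ord}_q u-1)\le k+1-\chi$ (with the boundary contribution absorbing an extra $-l$). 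Hence $\lfloor m/2\rfloor-1\le k+1-\chi-l$, so $m\le 2(2-\chi-l)+2k+1$, as claimed. The main obstacle is step (B): classically the structure of nodal sets rests on Bers' asymptotic expansion~\cite{Bers}, which needs at least H\"older coefficients, whereas here $(\nu-\lambda)u$ is in general only integrable, so the monotonicity of $\mathcal N$ must be carried through with every error term controlled solely by~(\ref{pot}), and the passage from almost-monotonicity to an integer vanishing order, a harmonic-polynomial blow-up, finitely many singular points, and a genuine local graph structure of $Z(u)$ is the delicate technical core; the continuity of eigenfunctions and the weak (hence strong) unique continuation property for $K^{2,\delta}$ potentials underpin the whole scheme and should be secured first.
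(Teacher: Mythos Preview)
Your overall architecture---Courant, local structure of nodal sets, Euler count---matches the paper's, and your linear-algebra step~(a) and the final count~(C) are essentially Lemmas~\ref{nn} and~\ref{nnd}. But the heart of the paper is precisely the place where your plan is vaguest, and the paper's route there is quite different from what you sketch.

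First, the harmonic-polynomial asymptotic at an interior zero is not derived via Almgren's frequency function; the paper quotes it as Prop.~\ref{vo}, a direct consequence of the Hoffmann--Ostenhof and Hoffmann-Ostenhof--Nadirashvili estimates for $K^{2,\delta}$ potentials together with strong unique continuation. More importantly, your claim~(b) that $u|_{\partial B(p,r)}$ has \emph{exactly} $2N(p)$ sign changes, and that $Z(u)$ is locally homeomorphic to a cone over $2N(p)$ points, is precisely Cheng's structure theorem, which the paper stresses is \emph{not} available at this regularity. What the paper proves (Lemma~\ref{iso2}, Theorem~\ref{t2}) is only the inequality $\deg(x)\geqslant 2\ord_x(u)$, with $\deg(x)$ even; the $C^0$ convergence of blow-ups to $P_N$ allows extra zeros to cluster near each zero of $P_N$, so exact equality is not claimed. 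Fortunately only the inequality is needed in the Euler count, so your equality $V-E=-\sum(\ord_q u-1)$ should be replaced by $V-E\leqslant -\sum(\ord_q u-1)$.

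The real gap is finiteness of the singular set $\mathcal N^2(u)$. You propose a ``B\'ezout/frequency-drop'' bound on $\sum_q(N(q)-1)$ over compacta, but the paper explicitly remarks that for arbitrary $K^{2,\delta}$ potentials it is not even known whether $\mathcal N^2(u)$ has Hausdorff dimension zero for general solutions. The paper's argument is not a local frequency estimate at all: it first shows (Lemma~\ref{upper}) that $\ord_x(u)$ is \emph{strictly} upper semicontinuous on $\mathcal N^2(u)$, hence $\mathcal N^2(u)$ is totally disconnected; then, crucially, it feeds Courant's theorem back into~(B)---the finiteness of nodal domains is used to analyse the prime ends of nodal domains (Lemma~\ref{ends}) and to build arbitrarily small ``proper'' neighbourhoods with finitely many local nodal domains (Lemma~\ref{locality}); finally an induction on the maximal vanishing order, combined with an Euler-inequality counting argument (Lemma~\ref{iso}), gives finiteness of $\mathcal N^2(u)$ and of the nodal edges. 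So in the paper (A) is not merely the last step of~(C) but an essential ingredient of~(B); your plan treats them as independent, and the purely analytic route you outline for~(B) is, as far as the paper indicates, open.
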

For the first eigenvalue $\lambda_1(g,\nu)$ the above multiplicity bound is sharp when $M$ is homeomorphic to a sphere $\mathbb S^2$ or a projective plane $\mathbb{R}P^2$. When a potential $\nu$ is smooth, there is an extensive literature, see~\cite{CDV87,Na,Se} and reference there, devoted to sharper multiplicity bounds for the first eigenvalue. In addition,  in~\cite{HoNa2,HoNa} the authors show that when $M$ is a sphere or a disk the multiplicity bounds in Theorem~\ref{t0} can be improved to $m_k(g,\nu)\leqslant 2k-1$ for $k\geqslant 2$.  
We have made no effort in improving our results in these directions. However, it is worth mentioning that the main topological result in~\cite{Se} does yield a sharper multiplicity bound for $\lambda_1(g,\nu)$ for some closed surfaces when a potential $v$ belongs to the space $K^{2,\delta}(M)$. More precisely, if $M$ is a closed surface whose Euler-Poincare number is negative, $\chi<0$, then \cite[Theorem 5]{Se} implies that $m_1(g,\nu)\leqslant 5-\chi$ for any potential $v\in K^{2,\delta}(M)$. By the results in~\cite{CDV87} this bound is sharp for $\mathbb{T}^2\#\mathbb{T}^2$ and $\#n\mathbb{R}P^2$, where $n=3,4,5$.

The multiplicity bounds in Theorem~\ref{t0} also hold for eigenvalue problems on singular Riemannian surfaces; we discuss them in detail in Sect.~\ref{sing}. The proof of Theorem~\ref{t1} is based on the delicate study of the nodal sets of Schrodinger eigenfunctions that we describe below. 

\subsection{Nodal sets of eigenfunctions}
Let $u$ be a solution to the eigenvalue problem
\begin{equation}
\label{sch}
(-\Delta_g +\nu)u=\lambda u\qquad\text{on~ }M,
\end{equation}
where $\nu\in K^{2,\delta}(M)$, and if $\partial M\ne\varnothing$, the Dirichlet boundary hypothesis is assumed. Recall that by results in~\cite{Si} such an eigenfunction $u$ is Holder continuous. By $\mathcal N(u)$ we denote its nodal set, that is the set $u^{-1}(0)$. 

By the results in~\cite{HoHo,HoHoNa} combined with the strong unique continuation property~\cite{Saw,SS}, in appropriate local coordinates around an interior point $x_0\in M$ a non-trivial solution $u$ has the form
$$
u(x)=P_N(x-x_0)+O(\abs{x-x_0}^{N+\delta}),\qquad\text{where}\quad x\in U,
$$
where $P_N$ is a homogeneous harmonic polynomial on the Euclidean plane. We refer to Sect.~\ref{prem} for a precise statement. The degree of this approximating homogeneous harmonic polynomial defines the so-called {\em vanishing order} $\ord_x(u)$ for any interior point $x\in M$. Each point $x\in\mathcal N(u)$ has vanishing order at least one, and we define $\mathcal N^2(u)$ as the set of points $x$ whose vanishing order $\ord_x(u)$ is at least two.

The proof of Theorem~\ref{t1} is based on the following key result.
\begin{theorem}
\label{t2}
Let $(M,g)$ be a compact Riemannian surface, possibly with boundary, and $u$ be a non-trivial eigenfunction for the Schrodinger eigenvalue problem~\eqref{sch} with $\nu\in K^{2,\delta}(M)$, where $\delta\in (0,2)$. Then the set $\mathcal N^2(u)$ is finite, and the complement $\mathcal N(u)\backslash\mathcal N^2(u)$ has finitely many connected components. Moreover, for any $x\in\mathcal N^2(u)$ the number of connected components of $\mathcal N(u)\backslash\mathcal N^2(u)$ incident to $x$ is an even integer that is at least $2\ord_x(u)$.
\end{theorem}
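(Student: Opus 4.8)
\emph{Strategy and local model.} The plan is to substitute for Cheng's structure theorem, which is unavailable for potentials in $K^{2,\delta}(M)$, the polynomial approximation from Sect.~\ref{prem} together with a blow-up analysis of the nodal set. Concretely, fix an interior point $x_0\in\mathcal N(u)$ and set $N=\ord_{x_0}(u)$, so that in suitable coordinates $u(x_0+y)=P_N(y)+O(\abs{y}^{N+\delta})$ with $P_N$ a non-zero homogeneous harmonic polynomial of degree $N$ on $\mathbf R^2$; rotating coordinates, $P_N$ equals $c\,r^{N}\cos(N\theta)$ in polar coordinates, whose zero set is $2N$ equally spaced rays and whose restriction to every circle about the origin has exactly $2N$ sign changes. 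Rescale $u_\rho(y)=u(x_0+\rho y)/M(\rho)$, where $M(\rho)$ is the $L^2$-average of $u$ over $\partial B(x_0,\rho)$; then $u_\rho$ solves $-\Delta u_\rho=\rho^2(\lambda-\nu(x_0+\rho\,\cdot))u_\rho$, and the change of variables $z=x_0+\rho y$ in~\eqref{pot} shows that the rescaled potentials $\rho^2\nu(x_0+\rho\,\cdot)$ have $K^{2,\delta}$-modulus $O(\rho^\delta)\to0$. Hence $u_\rho$ converges, as $\rho\to0$, in $C^0(\overline{B_1})$ and weakly in $W^{1,2}(B_1)$, to a normalisation of $P_N$. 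Since $\cos(N\theta)$ is bounded away from zero off any prescribed neighbourhood of its zero rays, this gives, for every $\eta>0$ and all small $\rho$, that $\mathcal N(u)\cap\bigl(B(x_0,\rho)\setminus B(x_0,\eta\rho)\bigr)$ is contained in $2N$ thin sectors $C_1,\dots,C_{2N}$ about those rays, that $u$ has opposite constant signs on the two boundary rays of each $C_k$, and consequently that $u$ changes sign at least $2N$ times on every small circle $\partial B(x_0,\rho)$.

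\emph{Finiteness of $\mathcal N^2(u)$.} I would argue by contradiction: suppose points $x_j\in\mathcal N^2(u)$, $x_j\ne x_0$, accumulate at an interior point $x_0$, and set $\rho_j=\dist_g(x_j,x_0)\to 0$. Using the almost-monotonicity of the Almgren-type frequency function for the equation~\eqref{sch} with $\nu\in K^{2,\delta}(M)$ --- the same circle of ideas that underlies the strong unique continuation property --- one has that the frequency of $u$ at $x_j$ at scale $s$ is at least $2-O(s)$ uniformly for $s$ below a fixed scale; rescaling at scale $\rho_j$ and passing to the blow-up limit would then force a normalisation of $P_N$ to have frequency at least $2$ at a boundary point of $B_1$ different from the origin, whereas it has frequency $1$ there. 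Hence $\mathcal N^2(u)$ has no interior accumulation point; the Dirichlet case near $\partial M$ is reduced to the interior one by odd reflection across the boundary, and compactness of $M$ then gives finiteness of $\mathcal N^2(u)$. The same blow-up picture shows that around every point $\mathcal N(u)$ has only finitely many local branches, so a covering argument gives that $\mathcal N(u)\setminus\mathcal N^2(u)$ has finitely many connected components.

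\emph{Incidence at $x_0\in\mathcal N^2(u)$.} I would choose $r_0$ so small that $\overline{B(x_0,r_0)}$ meets $\mathcal N^2(u)$ only at $x_0$ and, shrinking $r_0$ further by a Faber--Krahn estimate for $-\Delta_g+\nu-\lambda$ on small balls (the $K^{2,\delta}$-mass of $\nu$ on small balls being negligible), that $B(x_0,r_0)$ contains no closed nodal loop. Then $\mathcal N(u)\cap\overline{B(x_0,r_0)}$ consists of $x_0$ together with finitely many arcs each joining $x_0$ to $\partial B(x_0,r_0)$, and their number $b$ equals the number of components of $\mathcal N(u)\setminus\mathcal N^2(u)$ incident to $x_0$. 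Now $B(x_0,r_0)\setminus\mathcal N(u)$ consists of $b$ cyclically arranged open sectors; on each $u$ has a non-zero constant sign by unique continuation, and adjacent sectors carry opposite signs because $u$ changes sign across an order-one nodal arc; hence $b$ is even. Finally, each of the $2N$ sectors $C_k$ of the local model carries opposite signs of $u$ on its two boundary rays, so $\mathcal N(u)\cap C_k$ meets every sufficiently small circle about $x_0$ and therefore contains a branch approaching $x_0$; the $C_k$ being pairwise disjoint, this yields $b\geqslant 2N=2\ord_{x_0}(u)$.

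I expect the main obstacle to be the finiteness of $\mathcal N^2(u)$: establishing the almost-monotonicity of a frequency (or doubling-index) functional for Schrodinger operators with $K^{2,\delta}$-potentials, and controlling its behaviour under the blow-up rescalings, is where the substantive work lies, whereas the parity and lower-bound arguments and the exclusion of small nodal loops are topological and comparatively routine once the local model is in place.
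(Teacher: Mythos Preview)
Your plan is a genuinely different route from the paper's, and the contrast is worth spelling out.

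The paper never touches a frequency function. Its organising principle is Courant's nodal domain theorem: an eigenfunction has only finitely many nodal domains. This is axiomatised as the notion of a \emph{proper subdomain} (Definition~\ref{proper}), and the paper proves the more general Theorem~\ref{t3} for any solution restricted to such a domain. Finiteness of $\mathcal N^2(u)$ is then obtained by induction on the maximal vanishing order, using three ingredients: (a) a strict upper semi-continuity of $\ord_x(u)$ on $\mathcal N^2(u)$ (Lemma~\ref{upper}), proved directly from the \emph{uniform} polynomial approximation of Prop.~\ref{me} rather than from frequency; (b) a prime-end analysis of nodal domains (Sect.~\ref{pe}, Lemma~\ref{locality}) to manufacture small proper sub-neighbourhoods at every nodal point, so that the induction can be localised; and (c) Euler-characteristic counts (Lemmas~\ref{iso1} and~\ref{iso}) that convert ``finitely many nodal domains'' into ``$\mathcal N^2$ finite and finitely many nodal edges''. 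The parity and the lower bound $\deg(x)\geqslant 2\ord_x(u)$ (Lemma~\ref{iso2}) are done as you do them, via the polynomial blow-up on small circles; but the exclusion of small nodal loops is again by an Euler count against the nodal-domain bound, not by Faber--Krahn.

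Your route---Almgren almost-monotonicity to show $\mathcal N^2$ is discrete, compactness, then Faber--Krahn to kill small loops---is more analytic and, if it goes through, more direct: it would give discreteness of $\mathcal N^2(u)$ for \emph{any} solution of~\eqref{schro}, not only eigenfunctions. The paper explicitly trades that generality away, remarking that without the finite-nodal-domain hypothesis it is not even known whether $\mathcal N^2(u)$ has Hausdorff dimension zero. Conversely, the paper's argument needs no frequency theory at all and works verbatim for any Schr\"odinger solution with finitely many nodal domains.

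The gap you flag is genuine, and there is a second one hiding in the same step. Almost-monotonicity of an Almgren-type frequency for $-\Delta+V$ with $V\in K^{2,\delta}$ is not supplied by the references here and has to be established. Moreover, the convergence you claim for the rescalings $u_\rho$---$C^0$ and weak $W^{1,2}$---is not enough to pass the bound $N_{x_j}(\sigma)\geqslant 2$ to the blow-up limit $P_N$: weak $W^{1,2}$ gives only lower semicontinuity of the Dirichlet energy, so the frequency can \emph{drop} in the limit, and you need strong $W^{1,2}_{\mathrm{loc}}$ convergence (typically obtained from a Caccioppoli inequality and the vanishing of the rescaled potential). If you want to sidestep this entirely, note that the paper's Lemma~\ref{upper} already gives, with no frequency input, that any accumulation point of $\mathcal N^2(u)$ has strictly larger vanishing order than the approximating points; but this alone does \emph{not} yield discreteness (order-$2$ points may still accumulate at an order-$3$ point), which is precisely why the paper falls back on the finite-nodal-domain hypothesis and the Euler counts.
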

The theorem says that the nodal set $\mathcal N(u)$ can be viewed as a graph: the vertices are points from $\mathcal N^2(u)$, and the edges are connected components of $\mathcal N(u)\backslash\mathcal N^2(u)$. This graph structure assigns to each $x\in\mathcal N^2(u)$ its degree $\deg(x)$, that is, the number of edges incident to $x$. If there is an edge that starts and ends at the same point, then it counts twice. The last statement of Theorem~\ref{t2} says that $\deg(x)\geqslant 2\ord_x(u)$ for any $x\in\mathcal N^2(u)$. When the potential $\nu$ is smooth, Theorem~\ref{t2} is a direct consequence of Cheng's structure theorem, and in this case, the degree $\deg(x)$ is precisely $2\ord_x(u)$.

The proof of Theorem~\ref{t2} uses essentially Courant's nodal domain theorem, and is based on topological arguments, which are in turn built on the results in~\cite{HoHo,HoHoNa}. More precisely, one of the key ingredients is the description of prime ends of nodal domains, which leads to a construction of neighbourhoods of $x\in\mathcal N(u)$ where a solution has also a finite number of nodal domains. Our method uses the properties of solutions in the interior of $M$ only; it largely disregards their behaviour at the boundary. Consequently, the main results (Theorems~\ref{t1} and~\ref{t2}) hold for rather general boundary value problems as long as Courant's nodal domain theorem holds, cf.~\cite[Sect.~6]{KKP}. The statement of Theorem~\ref{t2} continues to hold for general solutions to the Schrodinger equation $(-\Delta+V)u=0$ that have  a finite number of nodal domains. Without the latter hypothesis for arbitrary $L^p$-potentials it is unknown even whether the Hausdorff dimension of $\mathcal N^2(u)$ equals zero or not.

The paper is organised in the following way. In Sect.~\ref{prem} we collect the background material on the strong unique continuation property, regularity of nodal sets, and recall the approximation results from~\cite{HoHo,HoHoNa}. Here we also derive a number of consequences of these results that describe qualitative properties of nodal sets; they  are used often in our sequel arguments. In the next section we recall the notion of Caratheodory's prime end and show that prime ends of nodal domains have the simplest possible structure: their impression always consists of a single point. In Sect.~\ref{proofs} we prove Theorems~\ref{t1} and~\ref{t2}. In the last section we discuss multiplicity bounds for eigenvalue problems on surfaces with measures. We show that Laplace eigenvalue problems on singular Riemannian surfaces, such as Alexandrov surfaces of bounded integral curvature, can be viewed as particular instances of such problems. The paper also has an appendix where we give details on Cheng's structure theorem for reader's convenience.

\smallskip
\noindent
{\em Acknowledgements.} Some of our arguments at the end of Sect.~\ref{proofs} (the proof of Lemma~\ref{iso}) are similar in the spirit to the ones in~\cite{KKP}, and I am grateful to Mikhail Karpukhin and Iosif Polterovich for a number of discussions on the related topics. I am also grateful to Yuri Burago for a number of comments on Alexandrov surfaces.

\section{Preliminaries}
\label{prem}
\subsection{Background material}
We start with collecting background material on solutions of the Schrodinger equation, which is used throughout the paper. From now on we assume that a potential $V$ belongs to the space $K^{2,\delta}(M)$, where $\delta\in (0,1)$. The superscript $2$ in the notation for this function space refers to the dimension of $M$. Mention that the space $K^{2,\delta}(M)$ is contained in the so-called {\em Kato space} formed by absolutely integrable functions $V$ such that
$$
\sup_{x\in M}\int\limits_{B(x,r)}\ln(1/\abs{{x-y}})\abs{\nu(y)}dVol_g(y)\to 0\qquad\text{as}\quad r\to 0,
$$
see~\cite{Si}. Consider the Schrodinger equation
\begin{equation}
\label{schro}
(-\Delta_g+V)u=0\qquad\text{on~ }M,
\end{equation} 
understood in the distributional sense. As was mentioned above, by the results in~\cite{Si} its solutions are Holder continuous. They also enjoy the following {\em strong unique continuation property}.
\begin{prop}
\label{uc}
Let $(M,g)$ be a smooth connected compact Riemannian surface, possibly with boundary, and $x_0\in M$ be an interior point. Let $u$ be a non-trivial solution of the Schrodinger equation~\eqref{schro} with $V\in K^{2,\delta}(M)$, where $0<\delta<1$, such that 
$$
u(x)=O(\abs{x-x_0}^\ell)\qquad\text{for any }\ell>0.
$$
Then $u$ vanishes identically on $M$. 
\end{prop}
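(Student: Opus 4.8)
The plan is to reduce Proposition~\ref{uc} to the \emph{local} strong unique continuation property for Schr\"odinger operators with singular potentials on Euclidean domains, which is the content of \cite{Saw,SS}, and then to propagate the vanishing over $M$ by a connectedness argument. The analytic heart of the matter will be entirely contained in the cited local statement; the rest is soft.

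First I would localise near $x_0$. Choose an isothermal coordinate chart on a ball $B=B(x_0,r_0)$ whose closure lies in $\Int M$, so that $g=e^{2\varphi}(dx_1^2+dx_2^2)$ with $\varphi$ bounded. Then $-\Delta_g=e^{-2\varphi}(-\Delta)$, and on $B$ the equation~\eqref{schro} becomes $-\Delta u+\tilde V u=0$ with $\tilde V=e^{2\varphi}V$. Since the defining condition~\eqref{pot} of $K^{2,\delta}$ sees the metric only through comparable distances and volume elements, $\tilde V$ lies in $K^{2,\delta}$ of the coordinate ball (with a possibly smaller $\delta$), and $u$ still vanishes to infinite order at $x_0$ in these coordinates. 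Now the Euclidean strong unique continuation theorem of \cite{Saw,SS} applies and gives $u\equiv0$ on a smaller concentric ball $B(x_0,r_1)$.

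It then remains to pass from vanishing near $x_0$ to vanishing on all of $M$. Let $Z$ be the set of interior points at which $u$ vanishes to infinite order. By hypothesis $x_0\in Z$, so $Z\neq\varnothing$. The set $Z$ is open: if $p\in Z$, then localising around $p$ exactly as above and invoking the local strong unique continuation property shows that $u$ vanishes on an entire neighbourhood of $p$, and every point of that neighbourhood again lies in $Z$. The set $Z$ is also closed in $\Int M$: if $p\in\overline Z$, then $u$ vanishes identically on some nonempty open subset $\Omega_0$ of $\Int M$ (a neighbourhood of a point of $Z$ close to $p$), and the weak unique continuation property for~\eqref{schro} --- available from \cite{Saw,SS}, or deducible from the local statement through the three-balls inequality and a chain of balls joining $\Omega_0$ to an arbitrary point of $\Int M$ --- forces $u\equiv0$ on the connected open set $\Int M$; in particular $p\in Z$. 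Hence $Z$ is a nonempty open and closed subset of the connected manifold $\Int M$, so $Z=\Int M$ and $u\equiv0$ on $\Int M$. Since solutions of~\eqref{schro} are continuous on $M$ by the regularity results of \cite{Si}, this extends to $u\equiv0$ on all of $M$.

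The step I expect to be the main obstacle is the local strong unique continuation property itself: producing a Carleman-type estimate strong enough to absorb a potential of the mere regularity $V\in K^{2,\delta}$ --- rather than a bounded or H\"older potential --- is precisely the delicate point, which is why the argument is forced to rely on the external input \cite{Saw,SS}. Once that is in hand, the isothermal localisation and the clopen-set argument above are routine.
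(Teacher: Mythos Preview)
Your proposal is correct and in full agreement with the paper's approach: the paper does not give a detailed proof of Proposition~\ref{uc} at all, but simply states that it is a consequence of the results in~\cite{Saw}, noting (as pointed out in~\cite{HoHoNa,SS}) that Sawyer's argument in fact yields the \emph{strong} unique continuation property rather than merely the weak one. Your write-up supplies precisely the details the paper leaves implicit --- the reduction to the Euclidean case via isothermal coordinates (which preserves the $K^{2,\delta}$ condition since the conformal factor is smooth and bounded) and the standard open--closed connectedness argument to propagate local vanishing over all of $\Int M$ --- so there is nothing to correct.
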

Prop.~\ref{uc} is a consequence of the results in~\cite{Saw}, where the author proves that a solution $u$ of the Schrodinger equation with the potential $V$ from the Kato space $K^2(M)$ satisfies the unique continuation property: if $u$ vanishes on a non-empty open subset, then it vanishes identically. As was pointed out in~\cite{HoHoNa,SS}, the argument in~\cite{Saw} actually yields the strong unique continuation property.

The following fundamental statement is a combination of the main result in~\cite{HoHo} together with Prop.~\ref{uc}. 
\begin{prop}
\label{vo}
Let $(M,g)$ be a smooth compact Riemannian surface, possibly with boundary, and $u$ be a non-trivial solution of the Schrodinger equation~\eqref{schro} with $V\in K^{2,\delta}(M)$, where $0<\delta<1$. Then for any interior point $x_0\in M$ there exist its coordinate chart $U$ and a non-trivial homogeneous harmonic polynomial $P_N$ of degree $N\geqslant 0$ on the Euclidean plane such that
$$
u(x)=P_N(x-x_0)+O(\abs{x-x_0}^{N+\delta'}),\qquad\text{where}\quad x\in U,
$$
for any $0<\delta'<\delta$.
\end{prop}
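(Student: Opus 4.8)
\smallskip
\noindent
\emph{Proof proposal.}
The plan is to localise and flatten the equation to a Euclidean ball, apply the Bers-type expansion of~\cite{HoHo} there, and use Prop.~\ref{uc} only to rule out infinite vanishing. First I would fix isothermal coordinates centred at $x_0$, in which the metric reads $g=\rho\,\abs{dx}^2$ with $\rho$ a smooth positive function on a Euclidean ball $U$; then $\Delta_g=\rho^{-1}\Delta_{\mathbf{R}^2}$, so equation~\eqref{schro} becomes $\Delta_{\mathbf{R}^2}u=(\rho V)u$ on $U$ in the distributional sense. Since $\rho$ is bounded away from $0$ and $\infty$ on $U$, the Euclidean and the $g$-distances are comparable there and $dx$ is comparable to $dVol_g$; hence property~\eqref{pot} transfers from $V$ to the potential $W:=\rho V$ with respect to the Euclidean structure, so that $W$ belongs to the corresponding local Kato space $K^{2,\delta}(U)$. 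It therefore suffices to establish the expansion for distributional solutions of $\Delta_{\mathbf{R}^2}u=Wu$ on a Euclidean ball with $W\in K^{2,\delta}$, and then transport it back through the chart.

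Next I would invoke the main result of~\cite{HoHo} (see also~\cite{HoHoNa}), which gives the following dichotomy at the origin: either $u$ vanishes there to infinite order, that is $u(x)=O(\abs{x}^\ell)$ for every $\ell>0$, or there exist a unique integer $N\geqslant 0$ and a non-trivial homogeneous harmonic polynomial $P_N$ of degree $N$ such that $u(x)=P_N(x)+O(\abs{x}^{N+\delta'})$ for every $0<\delta'<\delta$; the smallness expressed by~\eqref{pot} is precisely what fixes the H\"older exponent of the remainder. The first alternative is impossible: if $u$ vanished to infinite order at $x_0$, then by Prop.~\ref{uc} it would vanish identically on $M$, contrary to the hypothesis that $u$ is non-trivial. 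Hence the second alternative holds, and this is the claim of the proposition.

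The genuine work is on the side of~\cite{HoHo}, and that is where I expect the main difficulty to lie. Two ingredients are involved there. The leading polynomial $P_N$ and its degree are produced by an iteration over dyadic annuli governed by the monotonicity of a frequency-type quantity, with the Kato condition~\eqref{pot} absorbing the error terms that this monotonicity acquires from the potential. The sharp remainder $O(\abs{x}^{N+\delta'})$, as opposed to a mere $o(\abs{x}^N)$, is then a quantitative elliptic-regularity estimate: one writes $w:=u-P_N$ as the Newtonian potential of $Wu$ plus a harmonic correction, bounds the potential of $Wu$ on small balls using the $\delta$-uniform estimate $\int_{B(x,r)}\abs{W}\,dx\leqslant Cr^\delta$ implied by~\eqref{pot} together with the bound $\abs{u(y)}=O(\abs{y}^N)$ near $x_0$, and notes that the harmonic correction, being real-analytic and $o(\abs{x}^N)$, is automatically $O(\abs{x}^{N+1})$ and hence absorbed into the error. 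On our side the only points needing verification are that the hypotheses of~\cite{HoHo} survive the conformal change made above and that the strong unique continuation of Prop.~\ref{uc} --- itself a consequence of~\cite{Saw}, as recalled after that proposition --- applies to the flattened equation; both are routine.
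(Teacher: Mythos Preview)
Your proposal is correct and matches the paper's treatment: the paper does not prove this proposition in detail but simply declares it to be ``a combination of the main result in~\cite{HoHo} together with Prop.~\ref{uc}'', which is exactly your argument---localise, invoke the Bers-type dichotomy from~\cite{HoHo}, and eliminate the infinite-order branch via strong unique continuation. Your additional remarks on isothermal coordinates and the transfer of the Kato condition under the conformal factor are appropriate fleshing-out of what the paper leaves implicit.
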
 
The proposition says that for any point $x\in M$ there is a well-defined {\em vanishing order} $\ord_x(u)$ of a solution $u$ at $x$, understood as the degree of the harmonic polynomial $P_N$. For a positive integer $\ell$ we define the set
$$
\mathcal N^\ell(u)=\{x\in\Int M~|~ \ord_x(u)\geqslant\ell\}.
$$
Clearly, the nodal set $\mathcal N(u)=u^{-1}(0)$ is precisely the set $\mathcal N^1(u)$. Recall that a connected component of $M\backslash\mathcal N(u)$ is called the {\em nodal domain} of $u$. The combination of the Harnack inequality in~\cite{AS,Si} and the unique continuation property implies that a non-trivial solution $u$ has different signs on adjacent nodal domains. Besides, every point $x\in\mathcal N(u)$ belongs to the closure of at least two nodal domains.

Now suppose that $u$ is an eigenfunction, that is, a solution to eigenvalue problem~\eqref{sch}.  The following version of a classical statement is used in sequel.
\begin{CNT}
Let $(M,g)$ be a smooth compact Riemannian surface, possibly with boundary, and $\nu\in K^{2,\delta}(M)$, where $0<\delta<1$. Then each non-trivial eigenfunction $u$ corresponding to the eigenvalue $\lambda_k(g,\nu)$ of eigenvalue problem~\eqref{sch} has at most $(k+1)$ nodal domains.
\end{CNT}
The proof follows standard arguments, see~\cite{CH}. It uses variational characterisation of eigenvalues $\lambda_k(g,\nu)$, the unique continuation property, Prop.~\ref{uc}, and the continuity of eigenfunctions up to the boundary. The latter can be deduced, for example, from the interior regularity~\cite{Si} by straightening the boundary locally and reflecting across it in an appropriate way.

\subsection{Qualitative properties of nodal sets}
Let $u$ be a solution of the Schrodinger equation~\eqref{schro}. If $u$ is $C^1$-smooth, then the implicit function theorem implies that the complement
\begin{equation}
\label{arcs}
\mathcal N^1(u)\backslash\mathcal N^2(u)
\end{equation} 
is a collection of $C^1$-smooth arcs. The following celebrated nodal set regularity theorem due to~\cite{HoHoNa} says that the latter holds under rather weak assumptions on a potential, when a solution $u$ is not necessarily $C^1$-smooth.
\begin{prop}
\label{reg}
Let $u$ be a non-trivial solution of the Schrodinger equation~\eqref{schro} with $V\in K^{2,\delta}(M)$, where $0<\delta<1$. Then any point $x$ in the complement~\eqref{arcs} has a neighbourhood $U\subset M$  such that  the set $\mathcal N^1(u)\cap U$ is the graph of a  $C^{1,\delta}$-smooth function with non-vanishing gradient. Further, if a potential $V$ is $C^{k,\alpha}$-smooth, then such a point $x$ has a neighborhood $U$ such that $\mathcal N^1(u)\cap U$ is the graph of a  $C^{k+3,\alpha}$-smooth function with non-vanishing gradient.
\end{prop}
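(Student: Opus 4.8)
The plan is to reduce the statement to the implicit function theorem by first upgrading the interior regularity of $u$ itself. Since $u$ is bounded and Hölder continuous and $V\in K^{2,\delta}(M)$, the product $Vu$ again lies in $K^{2,\delta}(M)$, so writing \eqref{schro} in local coordinates as $\Delta_g u=Vu$ one may invoke the standard potential estimate (the Green potential of a density in $K^{2,\delta}$ is of class $C^{1,\delta}$ --- equivalently, after rescaling around a point, the interior $C^{1,\delta}$-estimate for $(-\Delta+W)w=0$ with $W$ of small $K^{2,\delta}$-norm) to conclude that $u\in C^{1,\delta}_{\mathrm{loc}}(\Int M)$. If moreover $V\in C^{k,\alpha}$, then $Vu\in C^{k,\alpha}$ once $u$ is smooth enough, and a Schauder bootstrap for the uniformly elliptic operator $-\Delta_g+V$ (smooth principal part, $C^{k,\alpha}$ zeroth order term) gives $u\in C^{k+2,\alpha}_{\mathrm{loc}}(\Int M)$.

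Next I would identify the complement \eqref{arcs} with the set where $\nabla u\ne 0$. Let $x_0\in\mathcal N^1(u)\setminus\mathcal N^2(u)$, so that $\ord_{x_0}(u)=1$; by Prop.~\ref{vo} there are local coordinates in which $u(x)=P_1(x-x_0)+O(|x-x_0|^{1+\delta'})$ with $P_1$ a non-zero linear form. Since $u$ is differentiable at $x_0$ by the previous step, its differential there is the linear part $P_1$, and hence $\nabla u(x_0)\ne 0$; conversely, if $\nabla u(x_0)\ne 0$ and $u(x_0)=0$ then comparison with the expansion of Prop.~\ref{vo} forces $N=1$, i.e. $x_0\notin\mathcal N^2(u)$. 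By continuity of $\nabla u$ there is a neighbourhood $U$ of $x_0$ on which $\nabla u\ne 0$; after rotating coordinates so that $\partial_2 u(x_0)\ne 0$, the implicit function theorem solves $u=0$ on $U$ as $x_2=\phi(x_1)$, and since $u\in C^{1,\delta}$ (respectively $C^{k+2,\alpha}$) it yields $\phi\in C^{1,\delta}$ (respectively $\phi\in C^{k+2,\alpha}$), with $|\nabla u|\ne 0$ along the graph. This already proves the first assertion and a $C^{k+2,\alpha}$-version of the second.

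To gain the remaining derivative in the smooth case I would use the partial hodograph (Legendre) transform $y=(x_1,u(x))$ near $x_0$: it is a $C^{k+2,\alpha}$-diffeomorphism, the nodal set becomes $\{y_2=0\}$, and the graph function $\phi$ is the trace on $\{y_2=0\}$ of the second component $v$ of the inverse map. A direct computation turns \eqref{schro} into a quasilinear uniformly elliptic equation for $v$ whose inhomogeneity is a multiple of $y_2\cdot V(y_1,v(y))$ and therefore vanishes on $\{y_2=0\}$; a careful Schauder argument exploiting this vanishing produces one extra derivative for the trace, so that $\phi\in C^{k+3,\alpha}$.

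The hard part will be the input regularity of $u$ in the first step --- the $C^{1,\delta}$ potential-theoretic estimate for densities in $K^{2,\delta}$, which is precisely where the weakness of the potential is felt --- and, for the sharp exponent in the smooth case, the derivative-gain in the hodograph step; once these are available, the identification of \eqref{arcs} with $\{\nabla u\ne 0\}$ via Prop.~\ref{vo} and the implicit function theorem are routine.
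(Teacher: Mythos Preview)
The paper does not give a proof of Proposition~\ref{reg}; it is quoted as a known result from~\cite{HoHoNa}, and later the paper remarks only that its proof is based on the uniform approximation statement Proposition~\ref{me}. So there is no argument in the paper to compare yours against, but your proposal does contain a genuine gap worth flagging.

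Your first step asserts that $u\in C^{1,\delta}_{\mathrm{loc}}(\Int M)$ whenever $V\in K^{2,\delta}(M)$. This is exactly what is \emph{not} available, and is the reason the result of~\cite{HoHoNa} is nontrivial: the paper itself stresses that the proposition holds ``when a solution $u$ is not necessarily $C^1$-smooth''. The hypothesis $V\in K^{2,\delta}$ with $0<\delta<1$ controls integrals of the form $\int_{B(x,r)}|x-y|^{-\delta}|V(y)|\,dy$, whereas boundedness (let alone H\"older continuity) of $\nabla u$ via the Green representation requires control of $\int |x-y|^{-1}|V(y)u(y)|\,dy$, since $\nabla G\sim |x-y|^{-1}$ in two dimensions; for $\delta<1$ the former does not imply the latter. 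Concretely, an $L^p$-potential with $p>1$ close to $1$ lies in some $K^{2,\delta}$, but $-\Delta u=Vu$ then only gives $u\in W^{2,p}_{\mathrm{loc}}$, and for $p<2$ in dimension two this does not embed into $C^1$. Hence the implicit function theorem is unavailable, one cannot speak of $\nabla u(x_0)$, and your identification of $\mathcal N^1(u)\setminus\mathcal N^2(u)$ with $\{\nabla u\neq 0\}$ collapses.

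The argument in~\cite{HoHoNa} bypasses differentiability of $u$ altogether: it works with the uniform family of linear harmonic approximants $P_1^y$ from Proposition~\ref{me} as $y$ varies along the zero set, and extracts the $C^{1,\delta}$ regularity of the nodal curve directly from how these approximants vary with $y$. Your Schauder bootstrap and hodograph step for the $C^{k,\alpha}$ case are reasonable once $u$ is genuinely smooth, but for the principal assertion the input regularity you invoke is simply not there.
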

Below by {\em nodal edges} we call the connected components of $\mathcal N^1(u)\backslash\mathcal N^2(u)$. By Prop.~\ref{reg} they are diffeomorphic to intervals of the real line, and their ends belong to the set $\mathcal N^2(u)$. We say that a nodal edge is {\em incident} to $x\in\mathcal N^2(u)$, if its closure contains $x$. A nodal edge is called the {\em nodal loop}, if  it is incident to one point $x\in\mathcal N^2(u)$ only. In other words, such a nodal edge starts and ends at the same point $x$.

The important consequence of Prop.~\ref{reg} is the statement that nodal edges can not accumulate to another nodal edge. We use this fact to describe a nodal set structure around an isolated point $x\in\mathcal N^2(u)$. 
\begin{corollary}
\label{degree}
Let $(M,g)$ be a smooth compact Riemannian surface, possibly with boundary, and $u$ be a non-trivial solution of the Schrodinger equation~\eqref{schro} with  $V\in K^{2,\delta}(M)$, where $0<\delta<1$. Let $x\in\mathcal N^2(u)$ be an isolated point in $\mathcal N^2(u)$. Then the number of nodal edges incident to $x$ that are not nodal loops is finite. Moreover, any sequence of nodal loops incident to $x$ has to contract to $x$.
\end{corollary}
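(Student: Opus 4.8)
The plan is to deduce both assertions from a single finiteness statement about nodal edges crossing a small metric circle around $x$. Since $x$ is isolated in $\mathcal N^2(u)$ and $\mathcal N^2(u)\subset\Int M$, I would first fix $r>0$ so small that the closed ball $\overline{B(x,r)}$ is contained in $\Int M$ and $\mathcal N^2(u)\cap\overline{B(x,r)}=\{x\}$. For $0<\rho<r$ set $S(x,\rho)=\{y\in M:\dist(y,x)=\rho\}$; note that any zero of $u$ lying on $S(x,\rho)$ is an interior point and, being different from $x$ yet inside $\overline{B(x,r)}$, belongs to $\mathcal N^1(u)\setminus\mathcal N^2(u)$, so Prop.~\ref{reg} applies at it.

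The key step is the claim that for each $\rho\in(0,r)$ only finitely many nodal edges meet $S(x,\rho)$. Suppose on the contrary that $e_1,e_2,\dots$ are pairwise distinct nodal edges with $w_n\in e_n\cap S(x,\rho)$. By compactness of $M$ we may pass to a subsequence so that $w_n\to w$; then $w\in S(x,\rho)$, hence $w\neq x$ and $w\in\Int M$, and $w\in\mathcal N^1(u)$ because the nodal set is closed. By the choice of $r$ we get $w\notin\mathcal N^2(u)$, so by Prop.~\ref{reg} there is a neighbourhood $U$ of $w$ in which $\mathcal N^1(u)$ is a single connected $C^{1,\delta}$ arc with non-vanishing gradient. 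In particular every point of this arc has vanishing order one, so $\mathcal N^2(u)\cap U=\varnothing$ and therefore $\mathcal N^1(u)\cap U$ lies inside a single connected component of $\mathcal N^1(u)\setminus\mathcal N^2(u)$, i.e.\ inside one nodal edge. But $w_n\in U\cap\mathcal N^1(u)$ for all large $n$, which places infinitely many of the distinct edges $e_n$ into that one edge — a contradiction. This is exactly the assertion that nodal edges cannot accumulate onto a nodal edge.

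Granting the claim, both parts follow quickly. For the first part, let $e$ be a non-loop nodal edge incident to $x$. Then $\overline e\setminus e\subset\mathcal N^2(u)$ contains a point $y\neq x$, and since $y\in\mathcal N^2(u)$ we must have $y\notin\overline{B(x,r)}$; thus $e$ contains points at distance greater than $r$ from $x$ as well as points arbitrarily close to $x$ (because $x\in\overline e\setminus e$), so by connectedness $e$ meets $S(x,r/2)$. The claim with $\rho=r/2$ then shows there are only finitely many such edges $e$. For the second part, let $(e_n)$ be pairwise distinct nodal loops incident to $x$ and suppose the quantities $\sup_{y\in e_n}\dist(y,x)$ do not tend to $0$. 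Then for some $\varepsilon>0$ there are infinitely many $n$ and points $y_n\in e_n$ with $\dist(y_n,x)\geqslant\varepsilon$. Putting $\rho=\min(\varepsilon,r/2)\in(0,r)$, each such $e_n$ contains the point $y_n$ at distance $\geqslant\rho$ from $x$ together with points within $\rho$ of $x$, hence meets $S(x,\rho)$; this contradicts the claim. Therefore $\sup_{y\in e_n}\dist(y,x)\to0$, i.e.\ the loops contract to $x$.

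I expect the only real obstacle to be the bookkeeping in the key step: one must be sure that the accumulation point $w$ of the crossing points is an interior, first-order zero of $u$ so that Prop.~\ref{reg} genuinely applies there, which is precisely why $r$ is chosen with $\overline{B(x,r)}\subset\Int M$ and $\overline{B(x,r)}\cap\mathcal N^2(u)=\{x\}$; the remainder is compactness plus the intermediate value theorem. No analytic input beyond Prop.~\ref{reg} (hence beyond~\cite{HoHoNa}) is required.
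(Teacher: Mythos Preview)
Your proof is correct and follows essentially the same approach as the paper's: both derive a contradiction from infinitely many nodal edges crossing a small circle around $x$, using that such edges cannot accumulate onto a first-order nodal arc. Your execution is the streamlined variant the paper itself flags as available (``we may either appeal to Prop.~\ref{reg} directly, or argue in the following fashion''): you converge crossing points and invoke Prop.~\ref{reg}, whereas the paper Hausdorff-converges arc pieces and then runs a transversal/tangential case split via Prop.~\ref{vo}.
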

\begin{proof}
Let $B$ be a neighbourhood of $x$ whose closure does not contain any points in $\mathcal N^2(u)$. We view $B$ as a unit ball in $\mathbf{R}^2$ centered at the origin $x=0$. Suppose that there is an infinite number of nodal edges incident to $x$ that are not nodal loops. Denote by $\Gamma_i$ the connected components of the intersections of these nodal edges with the ball $B$ whose closures $\bar\Gamma_i$ contain $x$. By Prop.~\ref{reg}, each $\bar\Gamma_i$ consist of a piece of a $C^1$-smooth nodal arc and the origin $x$. They form a sequence of compact subsets of $\bar B$, and hence, contain a subsequence that converges to a compact subset $\bar\Gamma_0\subset\bar B$ in the Hausdorff distance. Clearly, the subset $\bar\Gamma_0$  belongs to the nodal set $\mathcal N(u)$ and contains the origin $x=0$. 
Since the subsets $\bar\Gamma_i$ contain points on the boundary $\partial B$, then so does $\bar\Gamma_0$; in particular, the limit subset $\bar\Gamma_0$ does not coincide with $x$. Since the origin $x$ is the only higher order nodal point in $\bar B$, then $\bar\Gamma_0\backslash\{x\}$ is the union of pieces of $C^1$-smooth nodal edges. Without loss of generality, we may assume that the sequence $\bar\Gamma_i$ converges to a subset $\bar\Gamma_0$ such that $\bar\Gamma_0\backslash\{x\}$ is a piece of a nodal edge. Now to get a contradiction we may either appeal to Prop.~\ref{reg} directly, or argue in the following fashion. Let $x_i\in\bar\Gamma_i\cap\partial B$ be a sequence of points that converges to a point $x_0\in\bar\Gamma_0\cap\partial B$. We consider the two cases.

\noindent
{\em Case~1: the complement $\bar\Gamma_0\backslash\{x\}$ belongs to a nodal edge that intersects $\partial B$ at $x_0$ transversally.} By Prop.~\ref{vo}, it is straightforward to see that the tangent line to $\Gamma_0$ at $x_0$ is precisely the kernel of an approximating linear function $P_1$ at $x_0$. Since $\Gamma_0$ intersects $\partial B$ at $x_0$ transversally, we conclude that the sequence $P_1((x_i-x_0)/\abs{x_i-x_0})$ is bounded away from zero for all sufficiently large $i$. On the other hand, by Prop.~\ref{vo} we obtain $P_1(x_i-x_0)=O(\abs{x_i-x_0}^{1+\delta})$, and arrive at a contradiction.

\noindent
{\em Case~2: the complement $\bar\Gamma_0\backslash\{x\}$ belongs to a nodal edge that is tangent to $\partial B$ at $x_0$.} Then there exists a sufficiently small ball $B_0$ centred at $x_0$ such that $\Gamma_0$ intersects $\partial B_0$ transversally. Choosing a sequence of points $x'_i\in\Gamma_i\cap\partial B_0$ that converges to a point $x'_0\in\Gamma_0\cap B_0$, and arguing in the fashion similar to the one in Case~1, we again arrive at a contradiction.

Now we demonstrate the last statement of the lemma. Suppose that there is a sequence of nodal loops incident to $x$ that do not contract to $x$. Choosing a subsequence and a sufficiently small neighbourhood $B$ of $x$, we may assume that each nodal loop intersects with $\partial B$. Then the argument above shows that this sequence has to be finite.
\end{proof}

We proceed with another statement on local properties of the nodal set near an isolated point $x\in\mathcal N^2(u)$.
\begin{corollary}
\label{degree2}
Let $(M,g)$ be a smooth compact Riemannian surface, possibly with boundary, and $u$ be a non-trivial solution of the Schrodinger equation~\eqref{schro} with  $V\in K^{2,\delta}(M)$, where $0<\delta<1$. Let $x\in\mathcal N^2(u)$ be an isolated point in $\mathcal N^2(u)$. Then there exists a neigbourhood $B$ of $x$, viewed as a ball in the Euclidean plane, such that the zeroes of $u$ on $\partial B$ are precisely the intersections of the connected components of $\mathcal N^1(u)\backslash\mathcal N^2(u)$ incident to $x$ with $\partial B$.
\end{corollary}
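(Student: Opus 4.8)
The goal is to produce a ball $B$ around the isolated higher-order point $x$ on whose boundary circle the zero set of $u$ is exactly the finite collection of transversal crossings coming from the finitely many nodal edges incident to $x$ (no stray zeros, no tangencies). The plan is to first invoke Corollary~\ref{degree}: only finitely many nodal edges incident to $x$ are not nodal loops, and any sequence of incident nodal loops contracts to $x$. Pick a neighbourhood $B_0$ of $x$, viewed as a Euclidean ball centred at the origin, whose closure meets $\mathcal N^2(u)$ only at $x$. By Corollary~\ref{degree}, by shrinking $B_0$ I may assume that \emph{every} nodal loop incident to $x$ that meets $B_0$ is entirely contained in $B_0$ (equivalently, does not touch $\partial B_0$), so that on $\partial B_0$ only the finitely many non-loop incident nodal edges can appear. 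What remains is to arrange that these finitely many $C^{1,\delta}$ arcs cross $\partial B$ transversally and that $\partial B$ contains no other zeros of $u$.

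The mechanism for the second point is the strong unique continuation property together with Proposition~\ref{vo}. Since $x$ is the only higher-order zero in $\overline{B_0}$, every point $y \in \mathcal N(u) \cap (\overline{B_0}\setminus\{x\})$ has vanishing order exactly $1$, hence by Proposition~\ref{reg} lies on a $C^{1,\delta}$ nodal arc with non-vanishing gradient; such $y$ therefore lies on a nodal edge. Any nodal edge meeting $\partial B_0 \setminus\{x\}$ but not incident to $x$ and not a loop at $x$ would, by Proposition~\ref{reg} (nodal edges cannot accumulate on another nodal edge) and a compactness argument exactly like the one in the proof of Corollary~\ref{degree}, force an additional constraint; but in fact the cleanest route is: consider the function $r \mapsto$ (number of connected components of $\mathcal N(u) \cap \partial B(x,r)$), and show it is locally constant and finite for small $r>0$ off a discrete set of ``bad'' radii. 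Concretely, I would use the co-area-type observation that for all but countably many radii $r$ the circle $\partial B(x,r)$ meets the finitely many relevant nodal arcs transversally (a nodal arc, being $C^1$, is tangent to a concentric circle only at isolated radii, because tangency at $y$ means the radial direction lies in $\ker P_1(y)$, and the set of such $y$ on a $C^1$ curve has no accumulation point unless the curve is an arc of the circle — which it is not, since it limits to $x$). Choosing such a \emph{good} radius $r_0$ and setting $B = B(x,r_0)$ gives transversal crossings; combined with the previous paragraph, every zero of $u$ on $\partial B$ lies on one of the finitely many incident non-loop nodal edges, and conversely each such edge, limiting to $x$, must cross $\partial B$.

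I expect the main obstacle to be ruling out the ``hidden'' zeros on $\partial B$: points of $\mathcal N(u) \cap \partial B$ that are neither on an incident nodal edge nor on a loop at $x$. A priori such a point $y$ lies on some nodal edge $e$ whose closure meets $\mathcal N^2(u)$; but inside $\overline{B_0}$ the only point of $\mathcal N^2(u)$ is $x$, so $\bar e \cap \overline{B_0}$ has its endpoints either at $x$ or on $\partial B_0$. If neither endpoint of the piece of $e$ through $y$ is $x$, then that piece is a nodal arc with both ends on $\partial B_0$; one then runs the compactness argument of Corollary~\ref{degree} to the family of such arcs as the radius varies, or more simply notes that for a good radius $r_0$ such a piece, if it existed, would still meet $\partial B(x,r_0)$ and one estimates as in Case~1 of the proof of Corollary~\ref{degree} using $P_1(y - x) = O(\abs{y-x}^{1+\delta})$ against the transversal lower bound on $P_1$ restricted to the circle — the contradiction being that this arc does not degenerate to $x$. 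Handling simultaneously the loops (which must be controlled so none reaches $\partial B$) and the non-loop edges (which must all reach $\partial B$ transversally) while choosing a single good radius is the bookkeeping core of the argument; the analytic input is entirely contained in Propositions~\ref{vo} and~\ref{reg} and Corollary~\ref{degree}, so no new estimate is needed.
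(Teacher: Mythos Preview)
Your proposal overcomplicates the argument by aiming to prove more than the corollary actually asserts. The statement does \emph{not} require transversality of the crossings, finiteness of the zeros on $\partial B$, or that nodal loops avoid $\partial B$; it only asks that every zero of $u$ on $\partial B$ lie on some nodal edge incident to $x$. Since nodal loops \emph{are} incident edges, their intersections with $\partial B$ are permitted by the conclusion.

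In fact, your attempt to force all incident loops strictly inside $B_0$ contains a genuine error: a loop incident to $x$ of large diameter meets every ball around $x$ (because $x$ lies in its closure) yet is not contained in any sufficiently small one, so shrinking $B_0$ cannot confine it. Corollary~\ref{degree} only says that any infinite \emph{sequence} of loops contracts to $x$; it does not bound the diameter of an individual loop. Fortunately this step is irrelevant to the actual statement, so the error does not invalidate your core argument.

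The paper's proof is much shorter and is exactly your ``main obstacle'' stripped of the surrounding apparatus: one shows that $x$ is not a limit point of the union of nodal edges \emph{not} incident to $x$, by the same Hausdorff-limit and transversal/tangent dichotomy argument as in the proof of Corollary~\ref{degree}. Once this is known, take $B$ small enough to avoid all non-incident edges and all other points of $\mathcal N^2(u)$; then every zero on $\partial B$ has vanishing order one, hence lies on some nodal edge by Proposition~\ref{reg}, and that edge must be incident to $x$. No co-area argument, good-radius selection, or loop bookkeeping is needed.
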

\begin{proof}
First, since $x$ is isolated in $\mathcal N^2(u)$, one can choose a neighbourhood $B$ such that it does not contain other points from $\mathcal N^2(u)$. Thus, for a proof of the lemma it is sufficient to show that the point $x$ is not a limit point of the nodal edges that are not incident to $x$. This can be demonstrated following an argument similar to the one used in the proof of Corollary~\ref{degree}.
\end{proof}

Let $x\in\mathcal N^2(u)$ be a point isolated in $\mathcal N^2(u)$ such that the number of nodal edges incident to $x$ is finite. The number of these nodal edges, where nodal loops are counted twice, is a characteristic of a point $x$, called the {\em degree} $\deg(x)$. It is closely related to the vanishing order $\ord_x(u)$. More precisely, if a solution $u$ is sufficiently smooth, then by Cheng's structure theorem~\cite{Cheng}, it equals $2\ord_x(u)$. The following lemma describes its relationship to $\ord_x(u)$ under rather weak regularity assumptions on $u$.
\begin{lemma}
\label{iso2}
Let $(M,g)$ be a smooth compact Riemannian surface, possibly with boundary, and $u$ be a non-trivial solution of the Schrodinger equation~\eqref{schro} with  $V\in K^{2,\delta}(M)$, where $0<\delta<1$. Let $x\in\mathcal N^2(u)$ be an isolated point in $\mathcal N^2(u)$ such that the degree $\deg(x)$ is finite. Then $\deg(x)$ is an even integer that is at least $2\ord_x(u)$.
\end{lemma}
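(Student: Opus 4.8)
The plan is to compare the nodal set of $u$ near $x$ with that of its approximating homogeneous harmonic polynomial. Put $N:=\ord_x(u)\geqslant 2$. By Prop.~\ref{vo} I fix a chart in which $x=0$ and $u=P_N+O(\abs{y}^{N+\delta'})$ with $0<\delta'<\delta$ and $P_N$ a non-trivial harmonic polynomial of degree $N$; in polar coordinates $P_N(r,\theta)=r^N\phi(\theta)$, where $\phi(\theta)=a\cos(N\theta-\theta_0)$ with $a\neq0$ has exactly $2N$ simple zeroes $\theta^*_1,\dots,\theta^*_{2N}$, spaced by $\pi/N$, and the $2N$ ``antinodal'' directions $\alpha_m$ between consecutive zeroes carry the extremal values of $\phi$, alternating in sign with $m$. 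Feeding these into the error estimate I would record, for all sufficiently small $r_0$, two facts: \emph{(i)} every nodal point $y$ with $0<\abs{y}<r_0$ lies in one of $2N$ pairwise disjoint cusp sectors $W_m=\{\,0<\abs{y}<r_0,\ \abs{\theta-\theta^*_m}<C_0\abs{y}^{\delta'}\,\}$; and \emph{(ii)} $u$ has a fixed, $m$-alternating sign along $\alpha_m$ for $0<\abs{y}<r_0$, so on every circle $\{\abs{y}=r\}$, $r<r_0$, the function $u$ changes sign between consecutive antinodal directions and therefore has a zero inside each $W_m\cap\{\abs{y}=r\}$.

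Since $\deg(x)<\infty$, only finitely many nodal edges meet $x$: let $e_1,\dots,e_t$ be the non-loops, with far endpoints $y_j\neq x$, and $\gamma_1,\dots,\gamma_s$ the loops. I would pick $B=B(x,\bar r)$ with $\bar r$ smaller than $r_0$, than every $\abs{y_j-x}$ and every $\max_{\overline{\gamma_i}}\abs{\cdot-x}$, small enough (Corollary~\ref{degree2}) that $\mathcal N^2(u)\cap B=\{x\}$ and all zeroes of $u$ on $\partial B$ lie on edges incident to $x$, and, by Sard's theorem applied to $\abs{\cdot-x}$ on the $C^{1,\delta}$-manifold $\mathcal N^1(u)\setminus\mathcal N^2(u)$, a regular value of this radial function. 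Call a connected component of $(\mathcal N(u)\cap\overline B)\setminus\{x\}$ a \emph{prong} if its closure contains $x$ and a \emph{chord} if its closure meets $\partial B$ but not $\{x\}$. A routine check with the parametrisations of the $e_j$ and $\gamma_i$ (the size conditions on $\bar r$ separating the two germs of each loop) gives that $e_j$ produces exactly one prong, $\gamma_i$ exactly two, and nothing else does, so $\#\{\text{prongs}\}=t+2s=\deg(x)$. As each prong lies in a single $W_m$, the bound $\deg(x)\geqslant 2N$ now reduces to the claim that \emph{each $W_m$ contains a prong}.

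To establish the claim I argue by contradiction: suppose no prong lies in $W_{m_0}$. By \emph{(ii)}, for each $r<\bar r$ there is a nodal point $z_r\in W_{m_0}\cap\{\abs{y}=r\}$; its component $P_r$ in $(\mathcal N(u)\cap\overline B)\setminus\{x\}$ then lies in $W_{m_0}$, and by assumption $x\notin\overline{P_r}$, so $\overline{P_r}$ is compact and avoids $x$, whence $\min_{\overline{P_r}}\abs{\cdot-x}\leqslant r$. Were only finitely many components to occur this minimum would be bounded below in $r$, so in fact infinitely many distinct such components $P^{(k)}$ occur, with points $w_k\in\overline{P^{(k)}}$, $w_k\to x$. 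Passing to a Hausdorff limit $\overline{P^{(k)}}\to K$, the set $K\subseteq\mathcal N(u)\cap\overline{W_{m_0}}$ is compact, connected and contains $x$. If $K\neq\{x\}$, I choose $y_0\in K$ with $0<\abs{y_0-x}<\bar r$; by Prop.~\ref{reg} the nodal set agrees with a single arc in some ball $U\ni y_0$, and since $\overline{P^{(k)}}$ meets $U$ for all large $k$, each such $P^{(k)}$ contains the connected set $\mathcal N(u)\cap U$, which is impossible because the $P^{(k)}$ are pairwise disjoint. If $K=\{x\}$, then $\overline{P^{(k)}}$ shrinks to $x$, so $\overline{P^{(k)}}\cap\partial B=\varnothing$ and $\overline{P^{(k)}}\setminus P^{(k)}\subseteq\{x\}$; the possibility $\overline{P^{(k)}}\setminus P^{(k)}=\{x\}$ is ruled out (it would make $P^{(k)}$ a prong), so $P^{(k)}$ is a nodal circle lying in an arbitrarily small ball about $x$. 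Such a circle bounds a disk $D$ containing no point of $\mathcal N^2(u)$, and (passing to an innermost nodal curve inside if needed) $\pm u$ is a positive solution of $(-\Delta_g+V)u=0$ on $D$ vanishing on $\partial D$, so $0$ belongs to the Dirichlet spectrum of $-\Delta_g+V$ on $D$; but the Poincar\'e inequality together with the infinitesimal form-boundedness of $V$ (a consequence of the compact embedding $W^{1,2}(M)\hookrightarrow L_2(M,\mu^\pm)$ recalled in the introduction) drives the bottom of that spectrum to $+\infty$ as $\mathrm{diam}(D)\to0$, a contradiction. This proves the claim, hence $\deg(x)\geqslant2N$.

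Finally, for evenness: for the good radius $\bar r$ the restriction $u|_{\partial B}$ has finitely many zeroes, all transversal, and it changes sign at each of them because neighbouring nodal domains carry opposite signs, so their number is even; since each zero lies on a prong (contributing one), a chord (contributing two), or a nodal circle in $B$ (contributing none), $\#\{\text{prongs}\}$ differs from this even number by twice the number of chords, hence is even. Therefore $\deg(x)=\#\{\text{prongs}\}$ is an even integer which is $\geqslant 2N=2\ord_x(u)$, as asserted. The essential difficulty is the sector claim of the third paragraph: when $V$ is smooth, Cheng's structure theorem gives a homeomorphism onto the model of $P_N$, so each $W_m$ trivially contains one radial nodal arc and $\deg(x)=2N$; for $K^{2,\delta}$-potentials a nodal edge may approach $x$ non-radially and oscillate, and it is precisely this loss of structure that forces the confinement-plus-compactness argument above — including the exclusion of ever-smaller nodal circles clustering at $x$ — and makes it the delicate point of the proof.
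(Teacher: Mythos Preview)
Your proof is correct, and its overall architecture --- approximate by the harmonic polynomial $P_N$, confine the nodal set to $2N$ sectors, show each sector carries an incident branch, then count parities --- matches the paper's. The execution, however, is considerably heavier than the paper's, and the difference is instructive.

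The paper rescales: with $u_\lambda(y)=\lambda^{-N}u(\lambda y)$ one has $u_\lambda\to P_N$ uniformly on the unit circle, so for small $\lambda$ the zeroes of $u$ on $\partial B_\lambda$ lie in $2N$ disjoint arcs, at least one in each. Then a single appeal to Corollary~\ref{degree2} (whose proof actually shows that \emph{all} nodal points in a sufficiently small punctured ball lie on edges incident to $x$) finishes the lower bound immediately: each arc already contains a point on an incident edge, hence each cone $C_i(\lambda)$ contains a branch. For evenness the paper observes that $u$ has opposite signs on the two complementary sectors adjacent to a given cone (inherited from $P_N$), so the number of branches in that cone is odd; summing over $2N$ cones gives an even total. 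No Hausdorff limits, no spectral input, no Sard argument.

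By contrast, you invoke Corollary~\ref{degree2} only for the single radius~$\bar r$ and then reprove its consequence from scratch inside each sector $W_{m_0}$: the absence of a prong forces infinitely many nodal components clustering at $x$, and you rule these out via a Hausdorff-limit dichotomy, one branch of which appeals to Prop.~\ref{reg} (as in the proof of Corollary~\ref{degree}) and the other to a Dirichlet-spectrum blow-up for shrinking nodal disks. This is valid, and the spectral step is a nice independent device, but it is work you could have avoided: the stronger content of Corollary~\ref{degree2} already excludes both non-incident arcs and small nodal circles near $x$. Your evenness argument via the global parity of sign changes on $\partial B$ is likewise correct and an acceptable alternative to the paper's per-cone count.

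In short: nothing is wrong, but you have built substantial extra machinery (Sard, Hausdorff limits, the spectral lower bound) to reach a conclusion that the paper gets in two lines from rescaling plus Corollary~\ref{degree2}.
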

\begin{proof}
Denote by $N$ the vanishing order $\ord_x(u)$, that is the degree of an approximating homogeneous harmonic polynomial $P_N(y-x)$, see Prop.~\ref{vo}. Choose a sufficiently small neighbourhood $B$ of $x$ such that it does not contain other points from $\mathcal N^2(u)$ and does not contain nodal loops. We identify $B$ with a unit ball in the Euclidean plane such that the point $x$ corresponds to the origin. By $B_\lambda\subset B$ we mean a neighbourhood that corresponds to a ball of radius $\lambda$, where $0<\lambda<1$. Consider the rescaled function
$$
u_\lambda(y)=\lambda^{-N}u(\lambda\cdot y)
$$
defined on the unit circle $S=\{y:\abs{y}=1\}$. Prop.~\ref{vo} implies that $u_\lambda(y)$ converges uniformly to the homogeneous harmonic polynomial $P_N(y)$ as $\lambda\to 0$, when $y$ ranges over the unit circle $S$. As is known, $P_N(y)$ changes sign on $S$ precisely $2N$ times, and hence, the corresponding zeroes are stable under the perturbation of $P_N(y)$. Thus, we conclude that for all sufficiently small $\lambda>0$ the zeroes of $u_\lambda$ lie in small pair-wise non-intersecting neighourhoods $U_i\subset S$, where $i=1,\ldots,2N$, of the zeroes of $P_N(y)$, and each $U_i$ contains at least one zero of $u_\lambda$. Choosing a sufficiently small $\lambda>0$,  by Corollary~\ref{degree2} we may assume that the zeroes of $u_\lambda$ correspond to the intersections of nodal edges incident to $x$ with $\partial B_\lambda$. Further, the intersections of the nodal edges incident to $x$ with $B_\lambda$ lie in the cones
$$
C_i(\lambda)=\{t\cdot\lambda U_i: 0<t<1\},\qquad\text{where}\quad i=1,\ldots,2N.
$$
Since the cones $C_i(\lambda)$ are pair-wise non-intersecting and each of them contains at least one connected piece incident to $x$ of a nodal edge, we conclude that $\deg(x)$ is at least $2N$. 

Now we claim that each cone $C_i(\lambda)$ contains an odd number of nodal edge pieces incident to $x$, and hence, the degree $\deg(x)$ is an even integer. Indeed, the solution $u$ has different signs on the connected components of $B_\lambda\backslash\cup C_i(\lambda)$ adjacent to the same cone; they coincide with the signs of $u_\lambda$ and the approximating homogeneous harmonic polynomial $P_N$. Since $u$ also has different signs on adjacent nodal domains, each nodal edge piece incident to $x$ contributes to the change of sign, and the claim follows in a straightforward fashion.
\end{proof}

\subsection{Properties of the vanishing order}
The proof of Prop.~\ref{reg} is based on the following improvement of Prop.~\ref{vo} due to~\cite{HoHoNa}, which is important for our sequel considerations. Below we denote by $B$ a coordinate chart viewed as a ball in the Euclidean plane, and by $B_{1/2}$ the ball of twice smaller radius.
\begin{prop}
\label{me}
Let $(M,g)$ be a smooth compact Riemannian surface, possibly with boundary, and $u$ be a non-trivial solution of the Schrodinger equation~\eqref{schro} with  $V\in K^{2,\delta}(M)$, where $0<\delta<1$. Let $B$ be a coordinate chart in the interior of $M$ viewed as a ball in the Euclidean plane.  Then for a sufficiently small $B$ and any $\ell\geqslant 1$ there exists a constant $C>0$ such that for any  point $y\in\mathcal N^\ell(u)\cap B_{1/2}$  there exists a degree $\ell$ homogeneous harmonic polynomial $P_\ell^y$ such that
$$
\abs{u(x)-P_\ell^y(x-y)}\leqslant C(\sup_B\abs{u})\abs{x-y}^{\ell+\delta}\qquad\text{for any~~}x\in B,
$$
and the polynomials $P_\ell^y$ satisfy $\abs{P_\ell^{y}(\bar x)}\leqslant C_*(\sup_B\abs{u})$ for any $\abs{\bar x}=1$, where the constants $C$  and $C_*$ do not depend on a solution $u$. 
\end{prop}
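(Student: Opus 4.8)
The plan is to follow the frequency-function and blow-up method of~\cite{HoHo,HoHoNa}, keeping track of how every constant depends on the data. By the linearity of~\eqref{schro} one may rescale so that $\sup_B\abs u=1$; this is legitimate since $\sup_B\abs u>0$ by the unique continuation property, Prop.~\ref{uc}. After this it suffices to produce, for each $\ell\geqslant 1$ and each $y\in\mathcal N^\ell(u)\cap B_{1/2}$, a degree-$\ell$ homogeneous harmonic polynomial $P_\ell^y$ (possibly the zero polynomial) with $\abs{P_\ell^y(\bar x)}\leqslant C_*$ on $\abs{\bar x}=1$ and $\abs{u(x)-P_\ell^y(x-y)}\leqslant C\abs{x-y}^{\ell+\delta}$ for all $x\in B$, where $C$ and $C_*$ may depend on the radius of $B$, on the geometry of $g$ over $\bar B$, on $\delta$, on $\ell$, and on the Kato modulus
$$
\eta(r)=\sup_{x}\int\limits_{B(x,r)}\abs{x-z}^{-\delta}\abs{V(z)}\,dVol_g(z),\qquad \eta(r)\to 0\ \ \text{as}\ \ r\to 0,
$$
but \emph{not} on $u$ or on $y$.

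Fix $y\in\mathcal N^\ell(u)\cap B_{1/2}$ and a radius $\rho_0>0$ small enough that $\overline{B(y,\rho_0)}\subset B$ for all such $y$, and for $0<r\leqslant\rho_0$ form the Almgren-type frequency $N(y,r)=rD(y,r)/H(y,r)$ with $H(y,r)=\int_{\partial B(y,r)}u^2$ and $D(y,r)=\int_{B(y,r)}(\abs{\nabla u}^2+Vu^2)$. The analytic input from~\cite{HoHo}, available because $V\in K^{2,\delta}$, is that $N(y,r)$ is almost nondecreasing in $r$ up to a correction governed by $\eta$, that $\lim_{r\to 0}N(y,r)$ exists and equals the integer $\ord_y(u)$, and that $H$ satisfies a doubling inequality $H(y,2r)\leqslant C\,H(y,r)$ with $C$ controlled by an upper bound for $N(y,2r)$. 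The rescalings $u_{y,r}(x)=u(y+rx)\,H(y,r)^{-1/2}$ have unit $L^2$-norm on the unit sphere and solve a Schrödinger equation whose potential has Kato modulus tending to $0$ with $r$; passing to the limit $r\to 0$ produces a harmonic function, homogeneous of degree $\ord_y(u)$, and the uniqueness of this limit --- the substance of~\cite{HoHo} together with Prop.~\ref{uc} --- identifies it with $P^y/\norm{P^y}_{L^2(\partial B_1)}$ for a well-defined homogeneous harmonic polynomial $P^y$ of degree $\ord_y(u)$. Reading off the rate of convergence from the control on $N(y,r)$ upgrades this to the expansion $u(y+z)=P^y(z)+O(\abs{z}^{\ord_y(u)+\delta})$ (cf.\ Prop.~\ref{vo}). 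One then sets $P_\ell^y=P^y$ if $\ord_y(u)=\ell$ and $P_\ell^y=0$ if $\ord_y(u)>\ell$; since $\ord_y(u)\geqslant\ell$ on $\mathcal N^\ell(u)$, in both cases $\abs{u(x)-P_\ell^y(x-y)}\leqslant C\abs{x-y}^{\ell+\delta}$ for $\abs{x-y}$ small, hence for all $x\in B$ after enlarging $C$ and using $\abs u\leqslant 1$. Evaluating this at $x=y+\rho_0\bar x$, $\abs{\bar x}=1$, gives $\abs{P_\ell^y(\bar x)}=\rho_0^{-\ell}\abs{P_\ell^y(\rho_0\bar x)}\leqslant\rho_0^{-\ell}\bigl(\abs{u(x)}+C\rho_0^{\ell+\delta}\bigr)\leqslant\rho_0^{-\ell}+C\rho_0^{\delta}=:C_*$, and undoing the normalisation restores the factor $\sup_B\abs u$ in both estimates.

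The main obstacle is that the preceding discussion is a priori valid only for a single solution $u$ and point $y$, whereas the constants must be uniform. Three ingredients supply the uniformity. First, the interior Caccioppoli and Hölder estimates for Kato-class Schrödinger operators~\cite{AS,Si} bound $D(y,\rho_0)$ by $C(\sup_B\abs u)^2$, uniformly. Second, a three-balls inequality --- a quantitative form of Prop.~\ref{uc} --- chained along a uniformly bounded number of overlapping balls of radius comparable to $\rho_0$ that join $y$ to a point where $\abs u$ is close to $\sup_B\abs u$, yields a uniform lower bound $H(y,\rho_0)\geqslant c(\sup_B\abs u)^2$, and hence a uniform upper bound $N(y,\rho_0)\leqslant\Lambda$; by the almost-monotonicity this controls $N(y,r)$, the doubling constant, the orders $\ord_y(u)$ that can occur at such points, and all constants entering the blow-up, uniformly in $u$ and $y$. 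Third, the Kato modulus $\eta$ is by construction uniform in the base point, so the corrections of order $\eta(r)$ in the monotonicity and in the rate of convergence are uniform as well. Assembling these, $C$ and $C_*$ depend only on $B$, $g|_{\bar B}$, $\delta$, $\ell$ and $\eta$, as required. The hardest single step is the quantitative unique continuation underlying the uniform lower bound for $H(y,\rho_0)$; the uniqueness of the blow-up limit, although deep, is already furnished by~\cite{HoHo}.
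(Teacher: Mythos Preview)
The paper does not prove this proposition at all: it is quoted from \cite[Theorem~1]{HoHoNa}, with the bound $\abs{P_\ell^y(\bar x)}\leqslant C_*\sup_B\abs u$ attributed to \cite[p.~1256]{HoHoNa}. So there is no ``paper's proof'' to compare against beyond the citation; the question is whether your sketch stands on its own.

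It does not. The step you flag as the hardest --- the chained three-balls inequality yielding a uniform lower bound $H(y,\rho_0)\geqslant c(\sup_B\abs u)^2$ and hence a uniform frequency bound $N(y,\rho_0)\leqslant\Lambda$ --- is false. Take $V\equiv 0$ on the unit disk $B$, $u_N(z)=\mathrm{Re}(z^N)$, and $y=0\in B_{1/2}$. Then $\sup_B\abs{u_N}=1$, while $H(0,\rho_0)=\pi\rho_0^{2N+1}\to 0$ and $N(0,\rho_0)=N\to\infty$. No chain of three-balls inequalities can prevent this: the inequality only propagates information already present at intermediate scales, and for $u_N$ the $L^2$-mass on every ball $B(0,r)$ with $r<1$ decays like $r^{2N}$. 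Since your entire uniformity argument (doubling constants, blow-up rate, the constant in $\abs{u(x)-P_\ell^y(x-y)}\leqslant C\abs{x-y}^{\ell+\delta}$) is routed through the bound $N(y,\rho_0)\leqslant\Lambda$, the proof collapses at this point.

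The underlying issue is structural: you try to control the \emph{full} vanishing order uniformly in $u$, which is impossible, whereas the proposition only asks for an approximation to the \emph{fixed} order $\ell+\delta$, with $C$ allowed to depend on $\ell$. The argument in \cite{HoHoNa} does not pass through a frequency bound; it builds the degree-$\ell$ harmonic approximant by iteratively comparing $u$ with harmonic functions on dyadic annuli, using only that the Kato modulus $\eta(r)\to 0$. The constants produced this way depend on $\ell$ and on $\eta$, but never on $\ord_y(u)$, which may be arbitrarily large. Your derivation of the bound on $\abs{P_\ell^y(\bar x)}$ in the last line of the second paragraph is, by contrast, correct once the main estimate is in hand.
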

Mention that the harmonic polynomials $P_\ell^y$ above either vanish identically or coincide with approximating harmonic polynomials at $y$ from Prop.~\ref{vo}. The main estimate of Prop.~\ref{me} is stated in~\cite[Theorem~1]{HoHoNa}. The bound for the values of the harmonic polynomials on the unit circle follows from the proof, and is explained explicitly on~\cite[p.1256]{HoHoNa}.

We proceed with studying the vanishing order $\ord_x(u)$ as a function of $x\in M$. The following lemma is a straightforward consequence of Prop.~\ref{me}. We include a proof for the completeness of exposition.
\begin{lemma}
\label{weak_upper}
Let $(M,g)$ be a smooth compact Riemannian surface, possibly with boundary, and $u$ be a non-trivial solution of the Schrodinger equation~\eqref{schro} with  $V\in K^{2,\delta}(M)$, where $0<\delta<1$. Then the function $\ord_x(u)$ is upper semi-continuous in the interior of $M$, that is, for any sequence $x_i$ converging to an interior point $x\in M$ one has the inequality $\lim\sup\ord_{x_i}(u)\leqslant\ord_x(u)$.
\end{lemma}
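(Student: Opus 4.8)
The plan is to argue by contradiction, feeding the uniform approximation estimate of Prop.~\ref{me} into the uniqueness of the local asymptotic expansion from Prop.~\ref{vo}. Fix an interior point $x$ and put $N=\ord_x(u)$. If the asserted semicontinuity fails for some sequence $x_i\to x$, then after passing to a subsequence we may assume $\ord_{x_i}(u)\geqslant N+1$ for every $i$, that is, $x_i\in\mathcal N^{N+1}(u)$, and we must derive a contradiction.

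First I would fix a coordinate chart $B$ around $x$, viewed as a Euclidean ball centred at $x$, small enough that Prop.~\ref{me} applies with $\ell=N+1$; since $x_i\to x$ we have $x_i\in B_{1/2}$ for all large $i$. Prop.~\ref{me} then supplies homogeneous harmonic polynomials $P^{x_i}$ of degree $N+1$ with
$$
\abs{u(z)-P^{x_i}(z-x_i)}\leqslant C\,(\sup_B\abs{u})\,\abs{z-x_i}^{N+1+\delta}\qquad\text{for all }z\in B,
$$
together with the uniform bound $\abs{P^{x_i}(\bar z)}\leqslant C_*(\sup_B\abs{u})$ for $\abs{\bar z}=1$, the constants $C,C_*$ being independent of $i$. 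Since the homogeneous harmonic polynomials of a fixed degree in two variables form a finite-dimensional space and the $P^{x_i}$ are uniformly bounded on the unit circle, a further subsequence of the $P^{x_i}$ converges, uniformly on compact sets, to a homogeneous harmonic polynomial $Q$ of degree $N+1$, possibly the zero polynomial. Passing to the limit $i\to\infty$ in the displayed inequality (using $x_i\to x$) then yields
$$
\abs{u(z)-Q(z-x)}\leqslant C\,(\sup_B\abs{u})\,\abs{z-x}^{N+1+\delta}\qquad\text{for all }z\in B,
$$
that is, $u(z)=Q(z-x)+O(\abs{z-x}^{N+1+\delta})$ near $x$.

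It then remains to rule out both possibilities for $Q$. If $Q\not\equiv 0$, the expansion just obtained and the uniqueness part of Prop.~\ref{vo} force $\ord_x(u)=N+1$, contradicting $\ord_x(u)=N$. If $Q\equiv 0$, then $u(z)=O(\abs{z-x}^{N+1+\delta})$; subtracting the expansion $u(z)=P_N(z-x)+O(\abs{z-x}^{N+\delta'})$ of Prop.~\ref{vo} (take any $0<\delta'<\delta$) gives $P_N(z-x)=O(\abs{z-x}^{N+\delta'})$, and since $P_N$ is homogeneous of degree $N$ this is possible only if $P_N\equiv 0$, again contradicting $\ord_x(u)=N$. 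Either way we reach a contradiction, which proves the lemma.

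The one point requiring care is the passage to the limit: one needs the coefficients of the approximating polynomials to remain bounded so that a convergent subsequence exists and the error term in Prop.~\ref{me} survives the limit. This is exactly what the uniform bound $\abs{P^{x_i}(\bar z)}\leqslant C_*(\sup_B\abs{u})$ in Prop.~\ref{me} guarantees, so this is not a serious obstacle; everything else is routine manipulation of the asymptotic expansions.
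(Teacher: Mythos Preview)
Your argument is correct and rests on the same input as the paper's proof, namely the uniform estimate of Prop.~\ref{me}; but your execution is more elaborate than necessary. The paper shows directly that $\mathcal N^\ell(u)$ is closed: given $x_i\in\mathcal N^\ell(u)$ with $x_i\to x$, it simply bounds
\[
\abs{u(z)}\leqslant\abs{u(z)-P^i_\ell(z-x_i)}+\abs{P^i_\ell(z-x_i)}\leqslant C\abs{z-x_i}^{\ell+\delta}+C_*\abs{z-x_i}^{\ell}
\]
and lets $i\to\infty$ to obtain $\abs{u(z)}\leqslant C'\abs{z-x}^\ell$, hence $\ord_x(u)\geqslant\ell$. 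No subsequence extraction, no limiting polynomial $Q$, no case split.

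Your route---passing to a convergent subsequence of the $P^{x_i}$ and analysing the limit $Q$---works, but the dichotomy $Q\equiv 0$ versus $Q\not\equiv 0$ is artificial here: in both cases the conclusion is just $u(z)=O(\abs{z-x}^{N+1})$, which already contradicts $\ord_x(u)=N$. The machinery you introduce (compactness of the polynomial family, convergence of $P^{x_i}(z-x_i)$) is exactly what is genuinely needed in the \emph{next} lemma (Lemma~\ref{upper}, the strict inequality), where one must compare the limiting polynomial with $P_N$ and count zeroes on the circle; for the present weak statement it is overkill.
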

\begin{proof}
For a proof of the lemma it is sufficient to show that if $x_i$ belong to $\mathcal N^\ell(u)$, then so does the limit point $x$. Without loss of generality, we may assume that the points $x_i$ lie in a coordinate chart $B$ that is identified with a unit ball in $\mathbf R^2$ centered at the origin $x=0$, and $x_i\to 0$ as $i\to +\infty$.  In addition, to simplify the notation, we assume that $\sup\abs{u}$ on $B$ equals $1$. Let $P^i_\ell$ be a degree $\ell$ homogeneous harmonic polynomial corresponding to $x_i$ from Prop.~\ref{me}. Representing $u$ as the sum of $u-P^i_\ell$ and $P^i_\ell$, we obtain
\begin{multline*}
\abs{u(x)}\leqslant \abs{u(x)-P^i_\ell(x-x_i)}+\abs{P^i_\ell(x-x_i)}\\ \leqslant C\abs{x-x_i}^{\ell+\delta}+C_*\abs{x-x_i}^{\ell}\qquad\text{for any~~}x\in B,
\end{multline*}
where the second inequality for a sufficiently large $i$ follows from Prop.~\ref{me}. Passing to the limit as $i\to +\infty$, we get
$$
\abs{u(x)}\leqslant C'\abs{x}^{\ell}\qquad\text{for any~~}x\in B,
$$
and conclude that the vanishing order at the origin is at least $\ell$. 
\end{proof}

Our last lemma says that the vanishing order $\ord_x(u)$ is strictly upper semi-continuous on $\mathcal N^2(u)$.
\begin{lemma}
\label{upper}
Let $(M,g)$ be a smooth compact Riemannian surface, possibly with boundary, and $u$ be a non-trivial solution of the Schrodinger equation~\eqref{schro} with  $V\in K^{2,\delta}(M)$, where $0<\delta<1$. Then for any sequence $x_i\in\mathcal N^2(u)$ converging to an interior point $x\in M$ we have $\lim\sup\ord_{x_i}(u)<\ord_x(u)$.
\end{lemma}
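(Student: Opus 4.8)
The plan is to combine the weak (non-strict) upper semi-continuity already established in Lemma~\ref{weak_upper} with a contradiction argument that exploits the stability estimate of Prop.~\ref{me} together with the graph structure of the nodal set near an isolated higher-order point. Suppose, for contradiction, that $x_i\in\mathcal N^2(u)$ converge to an interior point $x$ and $N:=\limsup\ord_{x_i}(u)\geqslant\ord_x(u)$. Passing to a subsequence we may assume $\ord_{x_i}(u)\geqslant N$ for all $i$, so that $x_i\in\mathcal N^N(u)$, and by Lemma~\ref{weak_upper} applied with $\ell=N$ we get $x\in\mathcal N^N(u)$, hence $\ord_x(u)=N$ exactly. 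Thus the point $x$ is \emph{not} isolated in $\mathcal N^N(u)\subset\mathcal N^2(u)$, and I want to show this is incompatible with Courant's nodal domain theorem, which forces $u$ to have only finitely many nodal domains. The heuristic is that each $x_i$, being a zero of order $N\geqslant 2$, is a branch point where at least $2N\geqslant 4$ nodal edges emanate, and accumulating infinitely many such branch points near $x$ should create infinitely many nodal domains.

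First I would set up local coordinates identifying a small coordinate chart $B$ around $x$ with a Euclidean ball centred at the origin $x=0$, and normalise $\sup_B\abs{u}=1$. Let $P_N^{x_i}$ be the degree-$N$ homogeneous harmonic polynomial from Prop.~\ref{me} attached to $x_i$; by the bound $\abs{P_N^{x_i}(\bar x)}\leqslant C_*$ on the unit circle and harmonicity, these polynomials lie in a compact set, so after passing to a subsequence they converge to a degree-$N$ harmonic polynomial $Q$. Since $x\in\mathcal N^N(u)$ with $\ord_x(u)=N$, the approximating polynomial of $u$ at $x$ is a genuine (non-zero) degree-$N$ harmonic polynomial $P$, and a comparison of the two expansions $u(y)=P(y)+O(\abs{y}^{N+\delta})$ and $u(y)=P_N^{x_i}(y-x_i)+O(\abs{y-x_i}^{N+\delta})$ as $y\to 0$ should identify $Q$ with $P$, in particular $Q\not\equiv 0$, so the $P_N^{x_i}$ are non-trivial for large $i$. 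A non-trivial degree-$N$ harmonic polynomial has exactly $2N$ sign changes on any circle centred at its vanishing point; using the stability of these zeros under the $C^0$-perturbation controlled by Prop.~\ref{me}, I would argue that for large $i$ and suitably chosen small radii $\rho_i$ the function $u$ has at least $2N\geqslant 4$ sign changes on $\partial B(x_i,\rho_i)$, and hence at least two distinct nodal domains meeting every such circle.

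The core combinatorial step is then to choose the radii $\rho_i$ and pass to a further subsequence so that the balls $B(x_i,\rho_i)$ are pairwise disjoint (possible since $x_i\to x$ and one may take $\rho_i$ much smaller than $\dist(x_i,x_j)$ for $j<i$ and than $\abs{x_i}$), and to extract from each an independent nodal domain, producing infinitely many nodal domains and contradicting Courant's theorem. I would make this precise using the Harnack inequality and the unique continuation property, as recalled in the excerpt: $u$ takes both signs on $\partial B(x_i,\rho_i)$, so there is a nodal domain $\Omega_i$ of $u$ on which $u>0$ (say) that intersects $B(x_i,\rho_i)$; by choosing a point $p_i\in\partial B(x_i,\rho_i)$ where $u(p_i)$ is close to $\max_{\partial B(x_i,\rho_i)}P_N^{x_i}>0$ and tracking the connected component of $\{u>0\}$ containing $p_i$, one gets domains $\Omega_i$ which are forced to be mutually distinct — for instance because $\Omega_i$ contains a definite ``peak'' near $x_i$ that the others cannot reach, or simply by a counting argument on how many of the $\Omega_i$ can coincide.

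The main obstacle I anticipate is this last disjointness/distinctness argument: the nodal domains $\Omega_i$ are global objects and could a priori all be pieces of a single large nodal domain that snakes through all the tiny balls $B(x_i,\rho_i)$, so ``infinitely many branch points'' does not trivially give ``infinitely many nodal domains''. Overcoming this requires a genuinely local-to-global bookkeeping — one natural route is to count nodal domains \emph{restricted to} a fixed small ball $B(x,r)$: inside $B(x,r)$ one has infinitely many pairwise disjoint sub-balls $B(x_i,\rho_i)$ each of which is met by a component of $\{u>0\}$ and a component of $\{u<0\}$ of $B(x,r)\setminus\mathcal N(u)$, and a topological planarity / Euler-characteristic argument (in the spirit of the degree counting in Lemma~\ref{iso2} and the graph structure the authors attach to $\mathcal N(u)$) shows that a planar domain cannot have a connected $\{u>0\}$-part whose complement region structure is compatible with infinitely many order-$N$ branch points unless the number of components of $B(x,r)\setminus\mathcal N(u)$ is infinite; combined with the fact that each such component lies in a nodal domain of $u$, this contradicts Courant's bound. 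I would expect to carry this out by first showing $\mathcal N^2(u)$ must be discrete as an intermediate step, or alternatively by invoking the prime-end analysis promised later in the paper; in the write-up I would isolate the planar counting as the crux and handle the analytic inputs (stability of zeros, Harnack, unique continuation) as routine applications of the propositions already available.
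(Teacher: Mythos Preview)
Your approach differs substantially from the paper's and contains a genuine gap that you yourself identify but do not close.

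The paper's argument is purely local and analytic, with no appeal to Courant's theorem or to any global topology of nodal domains. Assuming for contradiction that $\ord_x(u)=N$, one compares the expansion $u(y)=P_N(y)+O(\abs{y}^{N+\delta})$ at $x=0$ with the expansion $u(y)=P_N^i(y-x_i)+O(\abs{y-x_i}^{N+\delta})$ at $x_i$ from Prop.~\ref{me}, obtaining $\abs{P_N(y)-P_N^i(y-x_i)}\leqslant C(\abs{y}^{N+\delta}+\abs{y-x_i}^{N+\delta})$. The key step is to \emph{rescale} by $\lambda_i=\abs{x_i}$: writing $\bar x_i=x_i/\lambda_i$ and using homogeneity, the inequality on the unit circle becomes $\abs{P_N(\bar y)-P_N^i(\bar y-\bar x_i)}\leqslant C'\lambda_i^\delta\to 0$. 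After passing to a subsequence, $\bar x_i\to\bar x_0$ with $\abs{\bar x_0}=1$ and $P_N^i\to P_N^0$, forcing the identity $P_N(\bar y)=P_N^0(\bar y-\bar x_0)$ on the unit circle. This is impossible for $N\geqslant 2$: the left side has exactly $2N$ zeros on the unit circle (its nodal lines pass through the origin), while the right side has at most $N+1$ (its nodal lines pass through $\bar x_0\ne 0$). No nodal-domain counting is needed.

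Your proposal, by contrast, tries to deduce infinitely many nodal domains from the accumulation of order-$N$ points and then contradict Courant. You correctly flag the obstacle: many sign changes on many disjoint small circles need not produce many \emph{global} nodal domains, since a single nodal domain can visit every ball $B(x_i,\rho_i)$. The fixes you suggest are circular within the paper's logic: Lemma~\ref{iso2}, the graph structure of $\mathcal N(u)$, and the prime-end analysis of Sect.~\ref{pe} all rely (directly or via Corollary~\ref{cor_upper}) on the present lemma, and you offer no independent planarity or Euler argument that works before $\mathcal N^2(u)$ is known to be discrete. There is also a scope mismatch: Lemma~\ref{upper} is stated for arbitrary non-trivial solutions, for which Courant's bound is unavailable, so even a successful version of your argument would only establish the lemma under the extra hypothesis of finitely many nodal domains. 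The rescaling trick above avoids all of this and yields the lemma in full generality in a few lines.
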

\begin{proof}
As in the proof of Lemma~\ref{weak_upper}, we assume that the points $x_i$ belong to a coordinate chart $B$, viewed as  a unit ball in $\mathbf R^2$ centered at the origin $x=0$,  and $x_i\to 0$ as $i\to+\infty$.  We also suppose that $\sup\abs{u}$ on $B$ equals $1$. First, by Lemma~\ref{weak_upper} we conclude that the upper limit  $\lim\sup\ord_{x_i}(u)$ is finite; we denote it by $N$. After a selection of a subsequence, we may assume that the vanishing order $\ord_{x_i}(u)$ equals $N$ for each $x_i$.  By Lemma~\ref{weak_upper} it remains to show that the vanishing order $\ord_x(u)$ at the origin $x $ can not be equal to $N$.

Suppose the contrary; the order of $u$ at the origin equals $N\geqslant 2$. Let $P_N$ be an approximating homogeneous harmonic polynomial for $u$ at the origin. By Prop.~\ref{me}, for a sufficiently large index $i$ we have
\begin{multline}
\label{diff}
\abs{P_N(x)-P_N^i(x-x_i)}\leqslant\abs{u(x)-P_N(x)}+\abs{u(x)-P^i_N(x-x_i)}\\
\leqslant C(\abs{x}^{N+\delta}+\abs{x-x_i}^{N+\delta}) \qquad\text{for any~~}x\in B,
\end{multline}
where $P^i_N$ is an approximating homogeneous harmonic polynomial at $x_i$. Denote by $\lambda_i$ the absolute value $\abs{x_i}$, and by $\bar x_i$ the point $\lambda_i^{-1}x_i$ on the unit circle. Setting $x=\lambda_i\bar x$ in inequality~\eqref{diff} and using the homogeneity of the left hand-side, we obtain
\begin{equation}
\label{scaled_diff}
\abs{P_N(\bar x)-P_N^i(\bar x-\bar x_i)}\leqslant (1+2^{N+\delta})C\lambda_i^\delta\ \qquad\text{for any~~}\abs{\bar x}=1.
\end{equation}
Without loss of generality, we may assume that the sequence $\bar x_i$ converges to a point $\bar x_0$, $\abs{\bar x_0}=1$. Setting $\bar x$ to be equal to $\bar x_i$ in inequality~\eqref{scaled_diff} and passing to the limit as $i\to +\infty$, we see that $\bar x_0$ is a zero of $P_N$. Recall that the nodal set of $P_N$ consists of $n$ straight lines passing through the origin; the vanishing order of the origin equals $N$, and any other nodal point, such as $\bar x_0$, has vanishing order $1$. On the other hand, by Prop.~\ref{me} the polynomials $P^i_N$ are uniformly bounded on the unit circle, and since in polar coordinates they have the form 
$$
a_ir^N\cos (N\theta)+b_ir^N\sin (N\theta),
$$
we conclude that, after a selection of a subsequence, they converge either to zero or to a harmonic homogeneous polynomial $P^0_N$ of degree $N$. If the former case occurs, then after passing to the limit in inequality~\eqref{scaled_diff}, we see that $P_N(x)$ vanishes, and arrive at a contradiction. Now assume that the harmonic polynomials $P^i_N$ converge to a non-trivial harmonic polynomial $P^0_N$. Then the polynomials $P^i_N(\bar x-\bar x_i)$ converge uniformly to $P^0_N(\bar x-\bar x_0)$, and passing to the limit in inequality~\eqref{scaled_diff}, we conclude that $P_N(\bar x)$ coincides identically with $P^0_N(\bar x-\bar x_0)$. Now, since $N\geqslant 2$, it is straightforward to arrive at a contradiction. The polynomial $P_N(\bar x)$ has precisely $2N$ zeroes as $\bar x$ ranges over the unit circle, while the polynomial $P^0_N(\bar x-\bar x_0)$ has at most $N+1$.
\end{proof}
\begin{corollary}
\label{cor_upper}
Let $(M,g)$ be a smooth compact Riemannian surface, possibly with boundary, and $u$ be a non-trivial solution of the Schrodinger equation~\eqref{schro} with  $V\in K^{2,\delta}(M)$, where $0<\delta<1$. Then the set $\mathcal N^2(u)$ is totally disconnected, that is its every non-empty connected subset is a single point. Besides, the complement $\mathcal N(u)\backslash\mathcal N^2(u)$ is open and dense in the nodal set.
\end{corollary}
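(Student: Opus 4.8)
The plan is to deduce both assertions from the strict upper semicontinuity of the vanishing order on $\mathcal N^2(u)$ established in Lemma~\ref{upper}, together with the local normal form of Prop.~\ref{vo} and elementary planar topology; in particular, no analogue of Cheng's structure theorem is needed. For the total disconnectedness, suppose $S\subseteq\mathcal N^2(u)$ is a connected subset with at least two points. Since $\mathcal N^2(u)\subseteq\Int M$, the function $x\mapsto\ord_x(u)$ restricted to $S$ is integer-valued and bounded below by $2$, so it attains a minimal value $m$ at some $a\in S$. A point of a connected metric space having more than one point cannot be isolated, so there is a sequence $x_i\in S\setminus\{a\}$ with $x_i\to a$. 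Applying Lemma~\ref{upper} to this sequence of points of $\mathcal N^2(u)$ gives $\limsup_i\ord_{x_i}(u)<\ord_a(u)=m$, which contradicts $\ord_{x_i}(u)\geqslant m$ for all $i$. Hence $S$ is a single point.

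For the openness, it suffices to show that $\mathcal N^2(u)$ is relatively closed in $\mathcal N(u)$. If $x_i\in\mathcal N^2(u)$ converge to an interior point $x$, then Lemma~\ref{weak_upper} yields $\ord_x(u)\geqslant\limsup_i\ord_{x_i}(u)\geqslant 2$, so $x\in\mathcal N^2(u)$; the possibility that a sequence in $\mathcal N^2(u)$ accumulates at a point of $\partial M$ is disposed of by reflecting $u$ across the smooth boundary as in Sect.~\ref{prem}, which reduces it to the interior case. Thus $\mathcal N(u)\setminus\mathcal N^2(u)$ is open in the nodal set.

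For the density it is enough to prove that every $x\in\mathcal N^2(u)$ lies in the closure of $\mathcal N(u)\setminus\mathcal N^2(u)$, and I would do this in two steps. First, if $y\in\mathcal N^2(u)$ is isolated in $\mathcal N^2(u)$, choose a small ball $B\subset\Int M$ around $y$ with $\mathcal N^2(u)\cap B=\{y\}$; by Prop.~\ref{vo} the solution $u$ is approximated near $y$ by a non-trivial homogeneous harmonic polynomial $P_N(\,\cdot-y)$ of degree $N=\ord_y(u)\geqslant 2$, and since $P_N$ takes both signs arbitrarily close to $y$ and the remainder is of strictly higher order, $u$ itself takes both signs in every punctured neighbourhood of $y$. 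As $B\setminus\{y\}$ is connected, $u$ must vanish somewhere on it, and any such zero lies in $\mathcal N(u)\setminus\mathcal N^2(u)$; since $B$ can be taken arbitrarily small, $y\in\overline{\mathcal N(u)\setminus\mathcal N^2(u)}$. Second, given an arbitrary $x\in\mathcal N^2(u)$ and $\varepsilon>0$, I would reach a point of $\mathcal N^2(u)$ isolated in $\mathcal N^2(u)$ near $x$ by a finite descent: set $y_0=x$, and as long as $y_j$ is not isolated in $\mathcal N^2(u)$, use Lemma~\ref{upper} to choose $y_{j+1}\in\mathcal N^2(u)$ with $\ord_{y_{j+1}}(u)<\ord_{y_j}(u)$ and $\abs{y_{j+1}-y_j}<2^{-j-2}\varepsilon$. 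Since the orders are integers $\geqslant 2$, this terminates at some $y_k$ isolated in $\mathcal N^2(u)$ with $\abs{y_k-x}<\varepsilon/2$, and the first step then supplies a point of $\mathcal N(u)\setminus\mathcal N^2(u)$ within distance $\varepsilon/2$ of $y_k$, hence within $\varepsilon$ of $x$. This gives the density.

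I expect the only genuinely delicate points to be minor ones: in the descent one must ensure that one lands at a point isolated in \emph{all} of $\mathcal N^2(u)$ rather than merely in some fixed ball, which is precisely why the argument is iterative rather than a single minimisation; and the accumulation of $\mathcal N^2(u)$ at $\partial M$ in the openness step, which is handled by the reflection device already used in Sect.~\ref{prem}. Everything else is routine once Lemma~\ref{upper} and Prop.~\ref{vo} are in hand.
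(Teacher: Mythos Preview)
Your proof is correct, but it follows a different path than the paper's. For total disconnectedness, the paper argues that if $C\subset\mathcal N^2(u)$ is connected and not a point, then every $x\in C$ is a limit of points of $C$, and repeated application of Lemma~\ref{upper} bootstraps $C\subset\mathcal N^\ell(u)$ for all $\ell$, contradicting strong unique continuation. Your minimality trick is neater: it reaches a contradiction in one step and does not need to invoke Prop.~\ref{uc} explicitly. For density, however, the paper's argument is considerably shorter than your descent: it simply observes (via the Harnack inequality, already recorded in Sect.~\ref{prem}) that no nodal point is isolated, so if some ball met $\mathcal N(u)$ only in $\mathcal N^2(u)$, the same bootstrapping-to-infinite-order argument would apply verbatim. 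Your two-step route---first handling isolated points of $\mathcal N^2(u)$ via the sign change of $P_N$, then descending in order to reach such a point---works, but it reproves by hand facts (nearby zeros of $u$) that the paper gets for free from Harnack. In short: your argument for the first claim is an improvement, while for the second claim the paper's reuse of the bootstrapping idea is more economical.
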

\begin{proof}
Suppose the contrary to the first statement. Then there exists a non-empty connected subset $C\subset\mathcal N^2(u)$ that is not a single point. Since any point $x\in C$ is the limit of a non-trivial sequence in $C$, by Lemma~\ref{upper} we conclude that $C\subset\mathcal N^\ell(u)$ for any $\ell\geqslant 2$. Hence, the solution $u$ vanishes to an infinite order at  $C$, and by the strong unique continuation, Prop.~\ref{uc}, vanishes identically.  This contradiction demonstrates the first statement.

By Lemma~\ref{weak_upper} the set $\mathcal N^2(u)$ is closed, and for a proof of the second statement of the corollary it remains to show that the complement $\mathcal N(u)\backslash\mathcal N^2(u)$ is dense. Suppose the contrary. Then for some point $p\in\mathcal N(u)$ there exists a ball $B_\varepsilon(p)$ such that $C=B_\varepsilon(p)\cap\mathcal N(u)$ is contained in $\mathcal N^2(u)$. By Harnack inequality~\cite{AS,Si} no point in the nodal set can be isolated, and we conclude that any $x\in C$ is the limit of a non-trivial sequence in $C$. Now we arrive at a contradiction in the fashion similar to the one above.
\end{proof}

\section{Prime ends of nodal domains}
\label{pe}
Now we study the nodal set $\mathcal N(u)$ from the point of view of the topology of nodal domains. More precisely, we describe the structure of prime ends of nodal domains. The notion of prime end goes back to Caratheodory~\cite{Cara}, who used it to describe the behaviour of conformal maps on the boundaries of simply connected domains. Later his theory has been extended to general open subsets in manifolds~\cite{Ep}. However, main applications seem to be restricted to $2$-dimensional problems, see~\cite{Mil}.  We start with recalling the necessary definitions, following closely~\cite{Ep}. 

Let $\Omega\subset M$ be a connected open subset, where we view $M$ as the interior of a compact Riemannian surface. For a subdomain $D\subset\Omega$ by $\partial D$ we mean the interior  boundary, that is
$$
\partial D=\Omega\cap\bar D\cap (\overline{\Omega\backslash D}).
$$
\begin{defin}
A {\em chain} in $\Omega$ is a sequence $\{D_i\}$, $i=1,2,\ldots$, of open connected subsets of $\Omega$ such that:
\begin{itemize}
\item $\partial D_i$ is connected and non-empty for each $i$, and
\item $\bar D_{i+1}\cap\Omega\subset D_i$ for each $i$.
\end{itemize}
Two chains $\{D_i\}$ and $\{D'_i\}$ are called {\em equivalent} if for a given $i$ there exists $j>i$ such that $D'_j\subset D_i$ and $D_j\subset D'_i$.
\end{defin}
\begin{defin}
A chain in $\Omega$ is called the {\em topological chain} if there exists a point $p\in M$ such that:
\begin{itemize}
\item the diameter of $(p\cup\partial D_i)$ tends to zero as $i\to+\infty$, and
\item the distance $\dist(p,\partial D_i)>0$ for each $i$.
\end{itemize}
The point $p$ above is called the {\em principal point} of $\{D_i\}$. A {\em prime point} of $\Omega$ is the equivalence class of a topological chain.
\end{defin}
Clearly, for a given topological chain the principal point $p\in\bar\Omega$ is unique. Mention also that the above definitions do not depend on a metric on $M$. The set of all prime points of $\Omega$ is denoted by $\hat\Omega$. It is made into a topological space by taking the sets $\hat U$, formed by prime points represented by chains $\{D_i\}$ such that each $D_i$ lies in an open subset $U\subset\Omega$, as a topological basis. There is a natural embedding $\omega:\Omega\to\hat\Omega$, defined by sending a point $x\in\Omega$ to the equivalence class of a sequence of concentric balls centered at $x$ whose diameters tend to zero. As is shown in~\cite[Sect.~2]{Ep}, the map $\omega$ embeds $\Omega$ homeomorphically onto an open subset in $\hat\Omega$. A {\em prime end} of $\Omega$ is  a prime  point which is not in $\omega(\Omega)$. A {\em principal point} of a prime end is any principal point of any representative topological chain.

Although a given topological chain has only one principal point, a prime end may have many. The simplest example is given by considering a domain whose boundary has an oscillating behaviour similar to the graph of $\sin(1/x)$. The collection of all principal points is a subset of the {\em impression} $\cap\bar D_i$ of a prime end. The latter does not depend on a representative topological chain, and is a compact connected subset of the boundary $\partial\Omega$. Mention also that a given point $x\in\partial\Omega$ can be a principal point of many different prime ends. We refer to~\cite{Ep,Mil} for examples and other details.

The following statement, proved in~\cite[Sect.~6]{Ep}, shows that prime ends give a useful compactification (the so-called {\em Caratheodory compactification}) of open subdomains.
\begin{prop}
\label{compact}
Let $(M,g)$ be a Riemannian surface, viewed as the interior of a compact surface, and $\Omega\subset M$ be a connected open subset such that the first homology group $H_1(\Omega,{\mathbf Q} )$ is finite-dimensional. Then there is a homeomorphism of $\hat\Omega$ onto a compact surface with boundary that maps the set of prime ends onto its boundary. 
\end{prop}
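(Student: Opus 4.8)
The plan is to combine the structure theory of non-compact surfaces with the classical Carath\'eodory prime end theorem, the latter, in its generalisation to surfaces, being taken over from~\cite{Ep} as the local model near a prime end. Since the prime end construction does not involve the metric, one works purely topologically. In the situations of interest --- in particular for nodal domains on a closed surface --- one has in addition that $\bar\Omega$ is compact in $M$ and that the frontier $\partial\Omega=\bar\Omega\setminus\Omega$ has no isolated points, and I would assume this (it rules out the sole exceptional behaviour, a puncture-type end).

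I would first establish the global finiteness. An open subsurface of $M$ that is non-compact is homotopy equivalent to a $1$-complex, so $\pi_1(\Omega)$ is free and $H_1(\Omega,\mathbf{Q})\cong\pi_1(\Omega)^{\mathrm{ab}}\otimes\mathbf{Q}$; finite-dimensionality of $H_1(\Omega,\mathbf{Q})$ then forces $\pi_1(\Omega)$ to have finite rank. Hence $\Omega$ is of finite topological type: it has finitely many ends, each carrying a neighbourhood $E_j\cong S^1\times(0,1)$ in $\Omega$, the $E_j$ pairwise disjoint, and there is a relatively compact open $\Omega_0\subset\Omega$ with $\bar\Omega_0\subset\Omega$ and $\Omega=\Omega_0\cup\bigcup_{j=1}^l E_j$. (If $\Omega$ is compact it equals $M$, a closed surface, $\hat\Omega=\Omega$, and the claim is immediate.) By the results quoted from~\cite{Ep} the canonical map $\omega\colon\Omega\hookrightarrow\hat\Omega$ is an open embedding and $\omega(\bar\Omega_0)$ is compact, so the proposition is reduced to describing $\hat\Omega$ over each end $E_j$.

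Next comes the local analysis at one end $E=E_j$. If a topological chain $\{D_i\}$ does not represent a point of $\omega(\Omega)$, its connected interior boundaries $\partial D_i$ have diameters tending to zero about its principal point $p$, which lies in $\bar\Omega\setminus\Omega=\partial\Omega$; relative compactness of $\bar\Omega$ makes $\partial\Omega$ a compact subset of $M$, so $p$ is an honest interior point of $M$ at which all $D_i$, $i$ large, accumulate. Choosing a small coordinate disc $B\subset M$ about $p$, every such $D_i$ lies in a single connected component $V$ of $\Omega\cap B$ --- an open subset of the disc $B$ --- and the tail of $\{D_i\}$ is a topological chain in $V$ with principal point $p$. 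Thus the prime ends of $\Omega$ accumulating at $p$ are precisely the prime ends of the planar pieces $V$ whose closures contain $p$. For such a $V$ one applies Carath\'eodory's theorem: after reducing to the simply connected case by exhausting $V$ near $p$ with crosscut regions, the Riemann map of the simply connected model and the boundary correspondence for conformal maps furnish a homeomorphism of $\hat V$ onto a closed disc sending $V$ into the interior and the prime ends onto the bounding circle. Because $p$ is not isolated in $\partial\Omega$, the end $E$ is not a puncture, and splicing these discs along their overlaps in $\Omega$ identifies $\hat\Omega$ over $E$ with a half-open collar $S^1\times(0,1]$ whose circle $S^1\times\{1\}$ consists entirely of prime ends.

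Assembling, $\hat\Omega$ is covered by the open set $\omega(\Omega_0)$ --- homeomorphic to a relatively compact subsurface of $\Omega$ with smooth boundary --- together with the $l$ collars $S^1\times(0,1]$ of the previous paragraph, glued along the overlaps $\omega(E_j)$; this exhibits $\hat\Omega$ as a compact surface whose boundary is the disjoint union of the $l$ circles of prime ends, and the resulting identification is the desired homeomorphism. I expect the genuine difficulty to be the local step: proving that prime ends with large, ``pathological'' impressions --- such as those produced by a $\sin(1/x)$-type boundary, or by the infinite families of small nodal loops permitted here --- still contribute honest manifold-with-boundary points that organise into a single circle. This is exactly the content of Carath\'eodory's theorem and of its extension to surfaces in~\cite{Ep}; the underlying mechanism is that the connected crosscuts $\partial D_i$ chop $\Omega$ into a nested, shrinking sequence of ``ends'' with connected frontier, so that the equivalence class of a chain behaves as a single ideal boundary point no matter how complicated its impression is. The bookkeeping for the non-simply-connected local pieces $V$ is a secondary technicality, dealt with by the crosscut reduction above.
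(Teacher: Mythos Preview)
The paper does not actually prove this proposition: it is quoted verbatim from Epstein and attributed to \cite[Sect.~6]{Ep}, with no argument given. So there is no ``paper's own proof'' to compare against; your sketch is an attempt to reconstruct what Epstein does.

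On the substance of your sketch: the global reduction is sound --- finite-dimensional $H_1(\Omega,\mathbf{Q})$ forces finite topological type, and the problem localises to the finitely many annular ends. The local step, however, has real gaps. You add the hypotheses that $\bar\Omega$ is compact and that $\partial\Omega$ has no isolated points; neither is in the statement, and while the first is harmless here (since $M$ is the interior of a compact surface), the second genuinely changes the problem: a puncture-type end yields a single prime end, which becomes an \emph{interior} point of $\hat\Omega$, not a boundary circle --- so either the proposition as stated needs this hypothesis, or Epstein's conventions absorb it, and you should say which. More seriously, your passage from ``Carath\'eodory on each planar piece $V\subset\Omega\cap B$'' to ``a single circle $S^1\times\{1\}$ of prime ends over the whole end $E_j$'' is not justified. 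There may be infinitely many such components $V$ as $p$ ranges over $\partial\Omega$, their prime-end circles overlap in a complicated pattern, and the ``splicing'' you invoke is exactly the hard part: one must show that the cyclic order on prime ends induced by crosscuts in the annular end $E_j$ is well defined and complete, which is a global statement about $E_j$, not a patchwork of local Carath\'eodory discs. Epstein's argument (or a uniformisation of the annular end followed by the classical boundary correspondence) handles precisely this, and that is the step you would need to either cite or carry out.
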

We proceed with studying properties of nodal sets. The following lemma says that all prime ends of nodal domains have the simplest possible structure: any of them has only one principal point that coincides with its impression.
\begin{lemma}
\label{ends}
Let $(M,g)$ be a smooth compact Riemannian surface, possibly with boundary. Let $u$ be a non-trivial solution to the Schrodinger equation~\eqref{schro} with a potential $V\in K^{2,\delta}(M)$, where $0<\delta<1$, and $\Omega$ be its nodal domain. Then for any prime end $[D_i]$ of $\Omega$ its impression $\cap\bar D_i$ consists of a single point. In particular, any prime end has only one principal point.
\end{lemma}
\begin{proof}
First, the statement holds for any prime end that has a principal point $x$ in the complement $\mathcal N(u)\backslash\mathcal N^2(u)$. Indeed, then the point $x$ belongs to a nodal edge, which is the image of $C^1$-smooth regular path, see Prop.~\ref{reg}. By the implicit function theorem we can view a small nodal arc containing $x$ as a line segment in $\mathbf{R}^2$. Then it is straightforward to see that any chain that has $x$ as a principal point is equivalent to a chain that consists of concentric semi-disks centered at $x$ whose diameters converge to zero. Its impression consists of the point $x$ only. 

Now suppose that a given prime end has a principal point $x\in\mathcal N^2(u)$. Then we claim that its impression $I$ does not have any points in $\mathcal N(u)\backslash\mathcal N^2(u)$. Suppose the contrary. Then, since the impression $I$ of a prime end is connected, we conclude that $I$ contains a non-trivial arc $C$ that belongs to some nodal edge; that is, $C$ is a connected subset of $\mathcal N(u)\backslash\mathcal N^2(u)$ that is not a single point, and $\dist(x,C)>0$. Let $\{D_i\}$ be a representative topological chain whose principal point is $x$, and $E_i$ be the set $\partial D_i\backslash I$, where $\partial D_i$ is the boundary of $D_i$ viewed as a subset in $M$. First, it is straightforward to see that for any $y\in C\subset I$ the distance $\dist (y,E_i)$ converges to zero as $i\to +\infty$. Indeed, for otherwise there is a neighbourhood $U$ of $y$ in $\bar D_i$ such that $U\subset\bar D_i$ for any $i$. More precisely, viewing $C$ around $y$ as a straight segment in $\mathbf{R}^2$, we may choose $U$ to be diffeomorphic to a semi-disk $B^+_\varepsilon(y)$, assuming that $\dist(y,E_i)\geqslant 2\varepsilon$. Then we obtain the inclusions $U\subset I\subset\partial\Omega$, which are impossible. Thus, we see that any point $y\in C$ is the limit of a sequence $y_i\in\bar E_i$. Indeed, as $y_i$ one can take a point at which the distance $\dist(y,E_i)$ is attained. This implies that there is a sequence $C_i\subset\bar E_i$ of subsets that converges to a nodal arc $C$ in the Hausdorff distance. Clearly,  the sets $E_i\backslash (\partial D_i\cap\Omega)$ lie in the nodal set $\mathcal N(u)$, and since the interior boundaries $\partial D_i\cap\Omega$ converge to the point $x$, we conclude that for a sufficiently large $i$ the subset $C_i$ lies in the nodal set. Further, since the set $\mathcal N(u)\backslash\mathcal N^2(u)$ is open in the nodal set (see Lemma~\ref{weak_upper}), we see that each $C_i$ lies in $\mathcal N(u)\backslash\mathcal N^2(u)$. Thus, without loss of generality, we may assume that $C_i$ are arcs of nodal edges. Combining the latter with Prop.~\ref{reg}, or following the argument in the proof of Corollary~\ref{degree}, we arrive at a contradiction.

Thus, the impression $I$ does not have points in the complement $\mathcal N(u)\backslash\mathcal N^2(u)$, and is contained in $\mathcal N^2(u)$. By Corollary~\ref{cor_upper} the set $\mathcal N^2(u)$ is totally disconnected, and since the impression $I$ is connected, it has to coincide with the point $x$.
\end{proof}
\begin{corollary}
\label{top}
Under the hypotheses of Lemma~\ref{ends}, the following statements hold:
\begin{itemize}
\item[(i)] any point $x\in\partial\Omega$ is accessible, that is it can be joined with any interior point in $\Omega$ by a continuous path $\gamma:[0,1]\to M$ such that $\gamma(0)=x$ and the image $\gamma(0,1]$ lies in $\Omega$;
\item[(ii)] for any point $x\in\partial\Omega$ and any sufficiently small neighbourhood $U$ of $x$ there are only finitely many connected components $U_1,\ldots,U_k$ of $\Omega\cap U$ such that $x\in\bar U_i$ and the union $\cup\bar U_i$ is a neighbourhood of $x$ in $\bar\Omega$;
\item[(iii)] the boundary $\partial\Omega$ is locally connected.
\end{itemize}
\end{corollary}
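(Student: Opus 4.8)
The plan is to realise the closure $\bar\Omega$ as a quotient of the Carath\'eodory compactification $\hat\Omega$, using Lemma~\ref{ends} to pin down exactly which points get identified. Provided $H_1(\Omega,\mathbf{Q})$ is finite-dimensional (this is where the local structure of the nodal set enters, and I return to it at the end), Prop.~\ref{compact} gives a homeomorphism of $\hat\Omega$ onto a compact surface with boundary $\Sigma$ carrying the set of prime ends onto $\partial\Sigma$; recall also that $\omega$ identifies $\Omega$ with $\Int\Sigma$. By Lemma~\ref{ends} every prime end $[D_i]$ has one-point impression $\cap\bar D_i=\{x\}$, so the diameters of $\bar D_i$ tend to $0$ while $x\in\bar D_i$, and any point of $\Omega$ lying in $D_i$ converges to $x$. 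Hence the assignment sending a prime point to the point it represents (itself if it lies in $\omega(\Omega)$, its impression point if it is a prime end) is a well-defined continuous map $\Phi\colon\Sigma\to\bar\Omega$: it restricts to a homeomorphism $\Int\Sigma\to\Omega$ and, since impressions lie in $\partial\Omega$, satisfies $\Phi^{-1}(\partial\Omega)=\partial\Sigma$. As $\Sigma$ is compact and $\bar\Omega$ Hausdorff, $\Phi$ is a closed surjection onto $\Phi(\Sigma)=\overline{\Phi(\Int\Sigma)}=\bar\Omega$; in particular $\Phi$ and its restriction $\Phi|_{\partial\Sigma}\colon\partial\Sigma\to\partial\Omega$ are closed quotient maps, and $\partial\Sigma$ is a finite disjoint union of circles.

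Statements (i) and (iii) would then follow quickly from this model. For (i), given $x\in\partial\Omega$ I would pick $\tilde x\in\partial\Sigma$ with $\Phi(\tilde x)=x$, take a radial segment issuing from $\tilde x$ in a half-disc coordinate chart of $\Sigma$ around $\tilde x$ (its interior lies in $\Int\Sigma$), push it forward by $\Phi$ to get a path starting at $x$ with interior in $\Omega$, and prolong it inside $\Int\Sigma\cong\Omega$ to reach any prescribed interior point. For (iii), $\Phi|_{\partial\Sigma}$ is a quotient map whose source is a finite union of circles, hence locally connected; since a quotient of a locally connected space is locally connected, $\partial\Omega$ is locally connected.

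For (ii) the model reduces everything to the finiteness of the fibre $\Phi^{-1}(x)$ for $x\in\partial\Omega$, i.e.\ to the fact that only finitely many prime ends of $\Omega$ have impression $\{x\}$. Granting this, I would cover the finite set $\Phi^{-1}(x)\subset\partial\Sigma$ by pairwise disjoint half-disc charts $V_1,\dots,V_k$ contained in $\Phi^{-1}(U)$; then each $\Phi(V_j\cap\Int\Sigma)$ is a connected subset of $\Omega\cap U$ whose closure contains $x$, so $\Omega\cap U$ has at most $k$ components touching $x$. Since $\Phi$ is closed and $\Phi^{-1}(x)\subset\bigcup_jV_j$, the set $\bar\Omega\setminus\Phi(\Sigma\setminus\bigcup_jV_j)$ is an open neighbourhood of $x$ inside $\bigcup_j\Phi(V_j)$, which yields the final clause of (ii).

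The main obstacle is therefore twofold and of one nature: checking the hypothesis of Prop.~\ref{compact} and bounding the number of prime ends over a boundary point. Both are controlled by the local approximation results: near $x$ the solution $u$ is uniformly close to a homogeneous harmonic polynomial of degree $N=\ord_x(u)$ (Prop.~\ref{vo}, Prop.~\ref{me}), whose nodal set is $2N$ rays, so $\mathcal N(u)$ near $x$ is trapped in $2N$ thin sectors. When $x$ is isolated in $\mathcal N^2(u)$ this, together with Corollary~\ref{degree} and Corollary~\ref{degree2}, already shows that only finitely many nodal edges are incident to $x$, that $\Omega\cap U$ has finitely many components touching $x$, and that $H_1(\Omega,\mathbf{Q})$ is finitely generated. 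The general case must be reduced to this by using total disconnectedness of $\mathcal N^2(u)$ (Corollary~\ref{cor_upper}) to screen $x$ off from the remaining higher-order nodal points by small sectors, and carrying this reduction out carefully -- the passage from ``$x$ isolated in $\mathcal N^2(u)$'' to arbitrary $x\in\mathcal N^2(u)$ -- is the technical heart of the argument.
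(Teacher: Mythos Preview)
Your quotient-map framework is sound and is, in effect, what Epstein's theorems (which the paper cites) encode: once all prime ends have one-point impression, the continuous surjection $\Phi\colon\Sigma\to\bar\Omega$ exists and statements (i) and (iii) drop out just as you describe. The divergence from the paper is in how the hypotheses of Prop.~\ref{compact} are secured and how (ii) is obtained.

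The paper does \emph{not} attempt to verify $H_1(\Omega,\mathbf{Q})<\infty$ for the nodal domain $\Omega$ itself, nor does it analyse the nodal structure near $x$ to bound the number of prime ends over $x$. Instead it exploits that all three statements are local and trivially true at points of $\mathcal N(u)\setminus\mathcal N^2(u)$: one first cuts $\Omega$ along smooth simple closed curves to kill the genus, then along arcs joining points of $\mathcal N(u)\setminus\mathcal N^2(u)$ on different boundary components, so as to reduce to a simply connected piece containing a neighbourhood of the given $x\in\mathcal N^2(u)$. After this reduction $H_1=0$ trivially and Epstein's Theorems~7.4, 8.2, 8.3 apply directly; in particular (ii) follows from Lemma~\ref{ends} and \cite[Theorem~8.2]{Ep} without any input from the nodal geometry.

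Your route instead tries to establish $H_1(\Omega,\mathbf{Q})<\infty$ and the finiteness of $\Phi^{-1}(x)$ from the approximation results and Corollaries~\ref{degree}--\ref{degree2}. There are two problems. First, your sentence ``\dots shows\dots that $H_1(\Omega,\mathbf{Q})$ is finitely generated'' does not follow: $H_1(\Omega)$ is a global invariant of the nodal domain and is not controlled by the local picture near a single boundary point $x$, isolated or not. Second, Corollaries~\ref{degree} and~\ref{degree2} are stated only for $x$ \emph{isolated} in $\mathcal N^2(u)$; the reduction from general $x$ to isolated $x$ that you flag as ``the technical heart'' is precisely what the paper proves \emph{afterwards} in Theorem~\ref{t3}, using Corollary~\ref{top} as an input (via Lemma~\ref{locality}). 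So attempting that reduction here would be circular. The localisation-by-cutting trick sidesteps both issues entirely.
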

\begin{proof}
We derive the statements using the results in~\cite{Ep}, which apply to open domains $\Omega\subset M$ whose first homology group $H_1(\Omega,{\mathbf Q} )$ is finite-dimensional. Mention that all statements are local, and hold trivially for the boundary points $x\in\mathcal N(u)\backslash\mathcal N^2(u)$. To prove the corollary for the boundary points $x\in\mathcal N^2(u)$ we may assume, after cutting $\Omega$ along smooth simple closed paths, that $\Omega$ has zero genus. Moreover, after cutting along paths joining points from $\mathcal N(u)\backslash\mathcal N^2(u)$ on different boundary components of $\Omega$, we may assume that $\Omega$ is simply connected, and the results in~\cite{Ep} apply.

In more detail, the first statement is a consequence of Lemma~\ref{ends}, \cite[Theorem~7.4]{Ep}, and~\cite[Theorem~8.2]{Ep}. The second statement follows from Lemma~\ref{ends} and~\cite[Theorem~8.2]{Ep}, and the third from Lemma~\ref{ends} and~\cite[Theorem~8.3]{Ep}.
\end{proof}

\section{The proofs}
\label{proofs}
\subsection{Proof of Theorem~\ref{t2}}
Let $(M,g)$ be a compact Riemannian surface, and $u$ be a solution to the Schrodinger equation~\eqref{schro} with a potential $V\in K^{2,\delta}(M)$, where $0<\delta<1$. First, we intend to generalise Theorem~\ref{t2} to certain subdomains $\Omega\subset M$. 
\begin{defin}
\label{proper}
A connected open subset $\Omega\subset M$ is called the {\em proper subdomain} with respect to a solution $u$ if its boundary consists of finitely many connected components, and the solution $u$ has finitely many nodal domains in $\Omega$, that is, the number of connected components $\Omega\backslash\mathcal N(u)$ is finite.
\end{defin}
If $u$ is an eigenfunction, then by Courant's nodal domain theorem the surface $M$ itself is a proper subdomain with respect to $u$. However, for our method it is also important to consider proper subdomains whose closures are contained in the interior of $M$. The hypothesis on the finite number of boundary components guarantees that a domain $\Omega$ has finite topology, and by Prop.~\ref{compact}, is homeomorphic to the interior of a compact surface with boundary. The second hypothesis in Definition~\ref{proper} mimics an important property of eigenfunctions, and is essential for our sequel arguments. Below by $\mathcal N_\Omega(u)$ and $\mathcal N^\ell_\Omega(u)$ we denote the sets $\mathcal N(u)\cap\Omega$ and $\mathcal N^\ell(u)\cap\Omega$ respectively.

Theorem~\ref{t2} is a consequence of the following more general result.
\begin{theorem}
\label{t3}
Let $(M,g)$ be a compact Riemannian surface, possibly with boundary, and $u$ be a non-trivial solution to the Schrodinger equation~\eqref{schro} with a potential $V\in K^{2,\delta}(M)$, where $0<\delta<1$. Then for any proper subdomain $\Omega\subset M$ with respect to $u$ the set $\mathcal N_\Omega^2(u)$ is finite, and the complement $\mathcal N_\Omega(u)\backslash\mathcal N^2(u)$ has finitely many connected components. Moreover, for any $x\in\mathcal N_\Omega^2(u)$ the number of connected components of  $\mathcal N_\Omega(u)\backslash\mathcal N^2(u)$ incident to $x$ (if one connected component starts and ends at $x$, then it counts twice) is an even integer that is at least $2\ord_x(u)$.
\end{theorem}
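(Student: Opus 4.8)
The plan is to establish the three assertions --- finiteness of $\mathcal N^2_\Omega(u)$, finiteness of the number of nodal edges in $\Omega$, and the degree inequality $\deg(x)\geqslant 2\ord_x(u)$ --- in that order, exploiting the finiteness of the number of nodal domains of $u$ in $\Omega$ as the only global input. First I would set up the combinatorial framework: each nodal domain $\Omega_j$ of $u$ in $\Omega$ has finite topology (by Prop.~\ref{compact}, since $H_1(\Omega_j,\mathbf{Q})$ embeds in $H_1(\Omega,\mathbf{Q})$, which is finite-dimensional), and by Lemma~\ref{ends} its Caratheodory compactification $\hat\Omega_j$ is a compact surface with boundary whose boundary circles map onto $\partial\Omega_j$ with each prime end having a single-point impression. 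So $\partial\Omega_j$ is locally connected (Corollary~\ref{top}(iii)) and is the continuous image of finitely many circles.

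For the first assertion, I would argue that a point $x\in\mathcal N^2_\Omega(u)$ forces several distinct nodal domains to have $x$ on their boundary, or forces some single nodal domain to "see" $x$ along several prime ends. More precisely, by Lemma~\ref{iso2} (applicable once we know $x$ is isolated in $\mathcal N^2_\Omega(u)$ with finite degree), a point of order $N$ has $\deg(x)\geqslant 2N\geqslant 4$, so locally $u$ changes sign at least $4$ times around $x$; hence at least two nodal domains are incident to $x$, or one nodal domain wraps around $x$ contributing at least two prime ends with principal point $x$. Since there are only finitely many nodal domains, and each has finitely many prime ends (its Caratheodory boundary being a compact $1$-manifold), the set of principal points that are higher-order nodal points is finite --- provided no point of $\mathcal N^2_\Omega(u)$ is a limit of other such points. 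That last exclusion is exactly Corollary~\ref{cor_upper}: $\mathcal N^2(u)$ is totally disconnected, and moreover by Lemma~\ref{upper} the order is strictly upper semicontinuous on $\mathcal N^2(u)$, so $\mathcal N^2_\Omega(u)$ has no accumulation point inside $\Omega$ --- an accumulation point would have infinite order, contradicting Prop.~\ref{uc}. Thus $\mathcal N^2_\Omega(u)$ is a closed discrete subset of $\Omega$; finiteness then needs a little care near $\partial\Omega$, handled by observing that each accumulation point on $\partial\Omega$ would again have to be a principal point of some nodal domain, of which there are finitely many, each contributing finitely many prime ends. I expect this passage --- controlling $\mathcal N^2_\Omega(u)$ uniformly up to the finitely many boundary components --- to be the main obstacle, and I would route it through the prime-end compactification: on $\hat\Omega_j$ the preimages of points of $\mathcal N^2$ are isolated (each impression is a single point and orders are strictly decreasing along sequences), and a compact $1$-manifold boundary meets a discrete set in finitely many points.

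Once $\mathcal N^2_\Omega(u)$ is finite, the complement $\mathcal N_\Omega(u)\setminus\mathcal N^2(u)$ is, by Prop.~\ref{reg} and Corollary~\ref{degree}, a locally finite union of $C^{1,\delta}$ arcs whose endpoints lie in the finite set $\mathcal N^2_\Omega(u)$; together with Corollary~\ref{degree2} each $x\in\mathcal N^2_\Omega(u)$ has only finitely many incident nodal edges, and away from $\mathcal N^2_\Omega(u)$ the nodal set is a closed $1$-submanifold of $\Omega$ with finitely many ends (bounded by the number of nodal domains and the finite topology of $\Omega$), hence finitely many components --- so the graph structure has finitely many vertices and edges. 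Finally the degree inequality is precisely Lemma~\ref{iso2} applied at each $x\in\mathcal N^2_\Omega(u)$: $x$ is isolated in $\mathcal N^2(u)$ and has finite degree by the previous step, so $\deg(x)$ is an even integer at least $2\ord_x(u)$. This reduces Theorem~\ref{t3}, and hence Theorem~\ref{t2} (take $\Omega=\mathrm{Int}\,M$, which is a proper subdomain by Courant's nodal domain theorem), to the local results of Section~\ref{prem} plus the prime-end analysis of Section~\ref{pe}.
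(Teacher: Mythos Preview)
Your proposal has a genuine gap at the heart of the finiteness argument for $\mathcal N^2_\Omega(u)$. You assert that ``$\mathcal N^2_\Omega(u)$ has no accumulation point inside $\Omega$ --- an accumulation point would have infinite order, contradicting Prop.~\ref{uc}.'' But Lemma~\ref{upper} does not say this: it only says that if $x_i\in\mathcal N^2(u)$ converge to $x$, then $\ord_x(u)>\limsup\ord_{x_i}(u)$. So an accumulation point of order-$2$ points has order at least $3$, not infinite order. Nothing you have written rules out, a priori, a configuration such as $\{1/n:n\geqslant 1\}\cup\{0\}$ with order $2$ at each $1/n$ and order $3$ at $0$. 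To force infinite order you would need the accumulation points themselves to accumulate, and those to accumulate, and so on; such a Cantor--Bendixson descent does not terminate without extra input at every stage. The extra input is the finiteness of nodal domains, but your argument never explains how to feed it back in after the first step. The prime-end reformulation you offer has the same defect: even granting that the boundary map $\partial\hat\Omega_j\to\partial\Omega_j$ is continuous with finite fibres over $\mathcal N^2$, the preimage of $\mathcal N^2$ on the boundary circles is discrete only if $\mathcal N^2\cap\partial\Omega_j$ is already discrete in $\partial\Omega_j$, which is the very point at issue. (Separately, the claim that each nodal domain has ``finitely many prime ends'' is simply false --- the Caratheodory boundary is a finite union of circles --- and $H_1(\Omega_j,\mathbf Q)$ need not inject into $H_1(\Omega,\mathbf Q)$.)

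The paper closes this gap with two auxiliary lemmas and an induction you do not attempt. Lemma~\ref{iso} shows that \emph{if} $\mathcal N^2_\Omega(u)$ consists of isolated points, then it is finite and the nodal graph in $\Omega$ is finite; the proof is an Euler-inequality count against the number of nodal domains. Lemma~\ref{locality} (this is where the prime-end analysis is actually used) produces, around any nodal point, arbitrarily small neighbourhoods that are themselves proper subdomains. With these in hand, isolatedness of $\mathcal N^2_\Omega(u)$ is proved by induction on the maximal vanishing order $\ell=\max\{\ord_x(u):x\in\Omega\}$, which is finite whenever $\bar\Omega$ lies in the interior of $M$. For $\ell=2$, Lemma~\ref{upper} gives isolatedness directly. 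For the step, Lemma~\ref{upper} makes $\mathcal N^\ell_\Omega(u)$ isolated; around any $p\in\mathcal N^2_\Omega(u)$ one takes a proper neighbourhood $U$ via Lemma~\ref{locality}, deletes the finitely many order-$\ell$ points to obtain a proper subdomain $U_0$ with maximal order at most $\ell-1$, and applies the inductive hypothesis there. This is the mechanism that converts the one-step strict drop of Lemma~\ref{upper} into global finiteness, and it is missing from your outline.
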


The proof of Theorem~\ref{t3} is based on the two lemmas below. The first lemma shows that proper neighbourhoods form a topological basis at any point $x\in\Omega$. Its proof relies on the topological consequences of our study of prime ends in Sect.~\ref{pe}.
\begin{lemma}
\label{locality}
Under the hypotheses of Theorem~\ref{t3}, for any point $x\in\mathcal N_\Omega(x)$ and any sufficiently small ball $B_\varepsilon(x)$ centered at $x$ there exists a proper subdomain $U_\varepsilon(x)$ with respect to $u$ such that $x\in U_\varepsilon(x)\subset B_\varepsilon(x)$. 
\end{lemma}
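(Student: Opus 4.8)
The plan is to build the proper subdomain $U_\varepsilon(x)$ by taking a small ball $B_\varepsilon(x)$ and cutting along a carefully chosen path through the nodal set, so that the resulting domain has only finitely many boundary components and a solution restricted to it has only finitely many nodal domains. First I would observe that the case $x\in\mathcal N_\Omega(u)\setminus\mathcal N^2(u)$ is trivial: by Proposition~\ref{reg} a small half-ball straightened via the implicit function theorem already works, since $u$ has exactly two nodal domains there. So assume $x\in\mathcal N^2(u)$, which by Lemma~\ref{weak_upper} is a closed set and by Corollary~\ref{cor_upper} is totally disconnected; hence for all sufficiently small $\varepsilon$ the ball $B_\varepsilon(x)$ meets $\mathcal N^2(u)$ only at $x$, and we may work inside a single coordinate chart identifying $B_\varepsilon(x)$ with a Euclidean disk centered at the origin.

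The core of the argument is to show that, after possibly shrinking $\varepsilon$, the intersection $B_\varepsilon(x)\cap\mathcal N(u)$ consists of $x$ together with finitely many nodal edges emanating from $x$ and finitely many other nodal arcs, none of which accumulates at $x$ from ``outside''. This is essentially the content of Corollaries~\ref{degree} and~\ref{degree2}: the nodal edges incident to $x$ that are not loops are finite in number, nodal loops at $x$ contract to $x$ (so we can discard them by shrinking $\varepsilon$), and $x$ is not a limit point of nodal edges not incident to $x$. The potential subtlety is that the full nodal set inside $B_\varepsilon(x)$ need not be finite — there could be infinitely many small nodal components away from $x$. To deal with this I would invoke the hypothesis that $\Omega$ is a proper subdomain: since $u$ has finitely many nodal domains in $\Omega$, the open ball $B_\varepsilon(x)\subset\Omega$ also has only finitely many nodal domains of $u$ (each nodal domain of $u|_{B_\varepsilon(x)}$ is contained in one of $\Omega$, and a connected open subset of $\Omega\setminus\mathcal N(u)$ meets only finitely many components of $\Omega\setminus\mathcal N(u)$ by local connectedness of the latter); hence any proper subdomain $U_\varepsilon(x)\subset B_\varepsilon(x)$ automatically inherits finitely many nodal domains, and it remains only to arrange that $U_\varepsilon(x)$ has finitely many boundary components.

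To control the boundary components I would pick $\varepsilon$ small enough that $\partial B_\varepsilon(x)$ is transverse to $\mathcal N(u)$ in the sense of Corollary~\ref{degree2} — i.e. the zeros of $u$ on the circle $\partial B_\varepsilon(x)$ are exactly the transverse intersections of nodal edges incident to $x$ with that circle — which is possible for a.e.\ small radius by Proposition~\ref{reg} and Corollary~\ref{degree2}. Then set $U_\varepsilon(x)=B_\varepsilon(x)$ if $x$ lies on finitely many nodal arcs only; in general, since $x\in\mathcal N^2(u)$ and $B_\varepsilon(x)$ may still carry infinitely many tiny closed nodal loops based at points other than $x$, I would further shrink to guarantee (using again that there are finitely many nodal domains, hence finitely many nodal components large enough to reach $\partial B_\varepsilon(x)$) that $B_\varepsilon(x)$ already has the desired form. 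The boundary of $U_\varepsilon(x)=B_\varepsilon(x)$ is then the single circle $\partial B_\varepsilon(x)$, so it has one boundary component, and we have just checked it has finitely many nodal domains; thus it is a proper subdomain containing $x$ and contained in $B_\varepsilon(x)$, and since $\varepsilon$ can be taken arbitrarily small, such neighborhoods form a topological basis at $x$.

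The main obstacle, as indicated, is precisely ruling out an infinitude of nodal components inside $B_\varepsilon(x)$ that would spoil either the finite-boundary-component condition or the finite-nodal-domain condition; this is where one genuinely needs the global ``proper subdomain'' hypothesis rather than purely local nodal-set regularity, together with the local connectedness of nodal domain boundaries established in Corollary~\ref{top}(iii) to pass from ``finitely many nodal domains in $\Omega$'' to ``finitely many nodal domains and boundary pieces in a small ball''.
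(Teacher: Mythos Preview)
Your proposal contains a genuine circularity. You write that since $\mathcal N^2(u)$ is totally disconnected (Corollary~\ref{cor_upper}), ``for all sufficiently small $\varepsilon$ the ball $B_\varepsilon(x)$ meets $\mathcal N^2(u)$ only at $x$''. But totally disconnected does not imply discrete: think of $\{0\}\cup\{1/n:n\geqslant 1\}$, or a Cantor set. At this stage of the paper it is \emph{not} known that $x$ is isolated in $\mathcal N^2(u)$; indeed, proving that is the content of Theorem~\ref{t3}, and Lemma~\ref{locality} is one of the ingredients in that proof. Consequently you are not entitled to invoke Corollaries~\ref{degree} and~\ref{degree2}, both of which have ``let $x$ be an isolated point in $\mathcal N^2(u)$'' as an explicit hypothesis.

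A second, related gap: your construction sets $U_\varepsilon(x)=B_\varepsilon(x)$ and asserts this is a proper subdomain. Even granting that only finitely many global nodal domains $\Omega_1,\ldots,\Omega_m$ have $x$ in their closure, a single $\Omega_i$ may intersect $B_\varepsilon(x)$ in infinitely many connected components (only finitely many of which touch $x$). Your parenthetical about local connectedness does not rule this out. The paper avoids the problem by \emph{not} taking the whole ball: it applies Corollary~\ref{top}(ii) to each $\Omega_i$ to extract the finitely many components $\Omega_i^j$ of $\Omega_i\cap B_\varepsilon(x)$ whose closure contains $x$, and defines $U_\varepsilon(x)$ as the interior of $\bigcup_{i,j}\bar\Omega_i^j$. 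This set contains $x$ because $\bigcup_j\bar\Omega_i^j$ is a neighbourhood of $x$ in $\bar\Omega_i$ (again Corollary~\ref{top}(ii)), has by construction only finitely many nodal domains, and can be made simply connected by shrinking $\varepsilon$. Note that this argument uses only the prime-end machinery of Section~\ref{pe} and makes no assumption about $x$ being isolated in $\mathcal N^2(u)$.
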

\begin{proof}
Let $x\in\mathcal N(u)$ be an interior nodal point in $\Omega$, and $\Omega_1,\ldots,\Omega_m$ be a collection of all nodal domains whose closure contains $x$. By Corollary~\ref{top} for any sufficiently small open ball $B_\varepsilon(x)\subset\Omega$ there are only finitely many connected components $\Omega_i^j$, $j=1,\ldots, r_i$, of the intersection $B_\varepsilon(x)\cap\Omega_i$ whose closure contains $x$. Besides, the union $F_i=\cup_j\bar\Omega_i^j$ is a neighbourhood of $x$ in $\bar\Omega_i$. Thus, we conclude that the set $U_\varepsilon(x)=\Int(\cup F_i)$ contains $x$. Clearly, the connected components of the complement $U_\varepsilon(x)\backslash\mathcal N(u)$ are precisely the domains $\Omega_i^j$, and it remains to show that $U_\varepsilon(x)$ has finitely many boundary components. Choosing $\varepsilon>0$ such that the metric ball $B_\varepsilon(x)$ is homeomorphic to a ball in $\mathbf{R}^2$, it is straightforward to see that any boundary component of $U_\varepsilon(x)$ that lies in $B_\varepsilon(x)$ bounds a union of nodal domains. Since the number of nodal domains is finite, then choosing $\varepsilon>0$ even smaller we conclude that $U_\varepsilon(x)$ is simply connected, and hence, its boundary is connected. Thus, the neighbourhood $U_\varepsilon(x)$ is indeed a proper subdomain with respect to a solution $u$.
\end{proof}

The second lemma says that if the set $\mathcal N^2_\Omega(u)$ consists of isolated points, then it is necessarily finite, and the nodal set has the structure of a finite graph with the vertex set $\mathcal N^2_\Omega(u)$.
\begin{lemma}
\label{iso}
Under the hypotheses of Theorem~\ref{t3}, suppose that the set $\mathcal N^2_\Omega(u)$ consists of isolated points.
Then the set $\mathcal N_\Omega^2(u)$ is finite, and the complement $\mathcal N_\Omega(u)\backslash\mathcal N^2(u)$ has finitely many connected components. 
\end{lemma}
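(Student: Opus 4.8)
The plan is to argue by contradiction on both assertions simultaneously, exploiting the fact that $\Omega$ has finite topology and $u$ has only finitely many nodal domains in $\Omega$. First I would invoke the Caratheodory compactification of Prop.~\ref{compact}: since $H_1(\Omega,\mathbf{Q})$ is finite-dimensional, $\hat\Omega$ is a compact surface with boundary, and by Lemma~\ref{ends} every prime end has a single-point impression, so the natural map $\hat\Omega\to\bar\Omega$ is well-behaved. Doing the same for each nodal domain $\Omega_j$, $j=1,\dots,n$ (they are finite in number by hypothesis), I get that each $\widehat{\Omega_j}$ is a compact surface with boundary whose boundary circles map onto $\partial\Omega_j\subset\mathcal N_\Omega(u)$, with each point of $\mathcal N^2_\Omega(u)$ having only finitely many preimages on each $\partial\widehat{\Omega_j}$ by Corollary~\ref{top}(ii). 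The nodal set $\mathcal N_\Omega(u)$ is then covered by the images of these finitely many boundary circles.

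Next I would set up the graph structure. Each point $x\in\mathcal N^2_\Omega(u)$ is isolated in $\mathcal N^2_\Omega(u)$ by hypothesis; by Corollary~\ref{degree} the nodal edges incident to $x$ that are not loops are finite in number, and by Corollary~\ref{degree2} there is a ball $B$ around $x$ meeting $\mathcal N^2_\Omega(u)$ only in $x$ and meeting $\mathcal N_\Omega(u)$ only along these incident edges. The key point is to rule out infinitely many nodal loops at $x$ and, more importantly, infinitely many distinct points of $\mathcal N^2_\Omega(u)$. Here is where the finiteness of the genus and of the number of nodal domains must be used: I would argue that each nodal loop at $x$, together with $x$, is a simple closed curve in $\bar\Omega$ that either bounds a disk containing further nodal structure or is homotopically essential; since $\Omega$ has finite topology only finitely many homotopy classes occur, and a disk bounded by the smallest loop in a nested family would have to contain a nodal domain or a point of $\mathcal N^2$, both in finite supply. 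A cleaner route: lift everything to the doubled/compactified surface $\widehat{\Omega_j}$ of each nodal domain. The preimage of $\mathcal N^2_\Omega(u)$ on the compact $1$-manifold $\partial\widehat{\Omega_j}$ is a closed set, and the preimages of the nodal edges are the complementary open arcs; if $\mathcal N^2_\Omega(u)$ were infinite, some $\partial\widehat{\Omega_j}$ would meet infinitely many of its points, so the complementary arcs would accumulate — but each such arc is (a portion of) a nodal edge, and by Prop.~\ref{reg} nodal edges cannot accumulate onto another nodal edge, giving a contradiction once one checks the limiting arc is genuinely a nodal edge (using Corollary~\ref{cor_upper}, $\mathcal N^2_\Omega(u)$ has empty interior in $\mathcal N_\Omega(u)$, so the accumulation arc lies in $\mathcal N_\Omega(u)\setminus\mathcal N^2(u)$).

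Once $\mathcal N^2_\Omega(u)$ is known to be finite, the second assertion follows: remove small disjoint balls $B_x$ around each of the finitely many points $x\in\mathcal N^2_\Omega(u)$ as furnished by Corollary~\ref{degree2}; outside $\bigcup_x B_x$ the nodal set $\mathcal N_\Omega(u)$ is a properly embedded $C^1$-submanifold of the compact surface $\bar\Omega\setminus\bigcup_x B_x$ whose boundary meets $\mathcal N_\Omega(u)$ in finitely many points (the edges crossing $\partial B_x$, finite by Corollary~\ref{degree}, plus a compactness argument near $\partial\Omega$), hence has finitely many components; reattaching the balls, each of which contains only finitely many edge-germs, shows $\mathcal N_\Omega(u)\setminus\mathcal N^2(u)$ has finitely many components. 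I expect the main obstacle to be the accumulation argument for the finiteness of $\mathcal N^2_\Omega(u)$: one must carefully verify that a Hausdorff limit of nodal edges lying between accumulating points of $\mathcal N^2_\Omega(u)$ is again a single nodal edge (not passing through a point of $\mathcal N^2$), so that Prop.~\ref{reg}'s non-accumulation conclusion applies — this is the same delicate point handled in Corollary~\ref{degree} and in Lemma~\ref{ends}, and I would reduce to those.
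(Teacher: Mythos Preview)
Your Carath\'eodory-compactification approach has a genuine gap. You apply Prop.~\ref{compact} to each nodal domain $\Omega_j$, which requires $H_1(\Omega_j,\mathbf{Q})$ to be finite-dimensional; this is unverified and essentially circular, since infinitely many nodal edges or vertices could give $\Omega_j$ infinitely many boundary components. More seriously, your accumulation argument cannot handle $\partial\Omega$. The hypothesis that $\mathcal N^2_\Omega(u)$ consists of isolated points concerns isolation \emph{within} $\Omega$, and since $\mathcal N^2_\Omega(u)$ is closed in $\Omega$ (Lemma~\ref{weak_upper}), any accumulation point of an infinite $\mathcal N^2_\Omega(u)$ \emph{must} lie on $\partial\Omega$. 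Your only tool, Prop.~\ref{reg} (nodal edges do not accumulate onto a nodal edge), is an interior result and says nothing there; the ``compactness argument near $\partial\Omega$'' you allude to in the last paragraph is precisely the missing ingredient, and local regularity alone cannot supply it. The same issue undermines the finiteness of edges: infinitely many edges may run off to $\partial\Omega$ without ever accumulating onto an interior nodal edge.

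The paper's proof avoids this entirely by a purely combinatorial count, never asking where vertices might accumulate. It collapses each boundary component of $\Omega$ to a point, obtaining a closed surface $\bar\Omega$ with a reduced nodal graph having finitely many ``boundary-component vertices'' plus the interior vertices from $\mathcal N^2_\Omega(u)$. First (Lemma~\ref{iso1}) it uses Euler's inequality $f\geqslant e-v+\bar\chi$ together with $f\leqslant m$ (each face of the graph is a union of nodal domains) to show each interior vertex has finite degree. Then it introduces a ``vertex resolution'' move---replacing a degree-$2n$ interior vertex by $n$ disjoint arcs, which does not increase the number of faces---and applies Euler's inequality to the resulting finite subgraphs to bound both the degrees of boundary-component vertices and the total number of interior vertices. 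The finiteness of nodal domains enters directly as the face bound $f\leqslant m$, which is exactly what your regularity-based approach underuses.
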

The proof of the last lemma appears at the end of the section. Now we proceed with the proof of Theorem~\ref{t3}.

\begin{proof}[Proof of Theorem~\ref{t3}]
By Lemma~\ref{iso} for a proof of the theorem it is sufficient to show that the set $\mathcal N_\Omega^2(u)$ consists of isolated points in $\Omega$. The second statement of the theorem is a direct consequence of Lemma~\ref{iso2}. First, we consider the case of proper subdomains $\Omega\subset M$ whose closures are contained in the interior of $M$, $\bar\Omega\subset M$. Given such a subdomain $\Omega$, it is straightforward to see that the {\em maximal vanishing order} $\ell=\max\{\ord_x(u)\}$, where $x\in\Omega$, is finite. Indeed, for otherwise there exists a point $p\in\bar\Omega$ that is the limit of points $x_i\in\Omega$ such that $\ord_{x_i}(u)\to +\infty$ as $i\to +\infty$. Then, by Lemma~\ref{weak_upper}, the solution $u$ vanishes to an infinite order at $p$, and the strong unique continuation, Prop.~\ref{uc}, implies that $u$ vanishes identically. 

Let $\Omega\subset M$ be a proper subdomain whose closure is contained in the interior of $M$. We prove that the set $\mathcal N^2_\Omega(u)$ is finite by induction in the maximal vanishing order $\ell$. Clearly, the statement holds for all solutions $u$ and proper subdomains $\Omega$ such that the maximal vanishing order  equals $2$. Indeed, in this case by Lemma~\ref{upper} the set $\mathcal N^2_\Omega(u)$ consists of isolated points, and by Lemma~\ref{iso}, is finite. Now we perform an induction step. Suppose that the set $\mathcal N^2_\Omega(u)$ is finite for all solutions $u$ to the Schrodinger equation~\eqref{schro} on $M$ and all proper subdomains $\Omega$ , whose closure is contained in the interior of $M$, such that
$$
\max\{\ord_x(u): x\in\Omega\}\leqslant \ell-1.
$$
Now let $u$ be a solution on $M$ and $\Omega$ be a proper subdomain such that  the maximal vanishing order equals $\ell$,
$$
\max\{\ord_x(u): x\in\Omega\}=\ell.
$$
By Lemma~\ref{upper} the set $\mathcal N^\ell_\Omega(u)$ consists of isolated points in $\Omega$. Pick a point $p\in\mathcal N^2_\Omega(u)$. By Lemma~\ref{locality} there is its neighbourhood $U$ that is a proper subdomain such that $\bar U\subset\Omega$. Then the neighbourhood $U$ may contain only finitely many points $p_1, p_2,\ldots, p_m$ whose vanishing order equals $\ell$. Since the domain $U_0=U\backslash\{p_1,\ldots, p_m\}$ is proper with respect to $u$, then the induction hypothesis implies that the set $\mathcal N^2(u)\cap U_0$ is finite. Hence, so is the set $\mathcal N^2(u)\cap U$.  Thus, we conclude that $\mathcal N^2_\Omega(u)$ consists of isolated points in $\Omega$, and by Lemma~\ref{iso}, is finite.

The statement that the set $\mathcal N^2_\Omega(u)$ consists of isolated points in $\Omega$ for an arbitrary proper subdomain $\Omega\subset M$ follows directly from the case considered above together with Lemma~\ref{locality}. 
\end{proof}

\subsection{Proof of Theorem~\ref{t1}}
Now we show how Theorem~\ref{t2} implies the multiplicity bounds. We give an argument following the strategy described in~\cite[Sect. 6]{KKP}.  It relies on two lemmas that appear below. The first lemma gives a lower bound for the number of nodal domains via the vanishing order of points $x\in\mathcal N^2(u)$.
\begin{lemma}
\label{nnd}
Under the hypotheses of Theorem~\ref{t1}, for any non-trivial eigenfunction $u$ of an eigenvalue $\lambda_k(g,\nu)$ the number of its nodal domains is at least $\sum(\ord_x(u)-1)+\chi+l$, where the sum is taken over all points in $\mathcal N^2(u)$ and $\chi$ and $l$ stand for the Euler-Poincare number and the number of boundary components of $M$ respectively.
\end{lemma}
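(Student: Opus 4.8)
The plan is to regard the nodal set $\mathcal N(u)=u^{-1}(0)$ as a finite graph embedded in $M$ and to read off the number $\beta$ of nodal domains from additivity of the Euler characteristic. By Theorem~\ref{t2}, $\mathcal N^2(u)$ is finite and $\mathcal N(u)$ is indeed a finite graph: its vertices are the points of $\mathcal N^2(u)$, the points where interior nodal arcs reach $\partial M$, and auxiliary vertices placed on the free nodal circles and on the boundary components of $M$ not met by nodal arcs; its edges are the interior nodal arcs together with the arcs into which $\partial M$ is thereby cut. Since $k\geqslant 1$, the eigenfunction $u$ is orthogonal to the positive ground state, so it changes sign; hence $\beta\geqslant 2$, and, as $M$ is connected, every nodal domain $\Omega_i$ is a proper subdomain with $\partial\Omega_i\not\subseteq\partial M$, so that (by Proposition~\ref{reg} and Theorem~\ref{t2}) $\overline{\Omega}_i$ is a compact surface whose boundary consists of $b_i\geqslant 1$ circles. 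By Courant's theorem (or again by Theorem~\ref{t2}) there are only finitely many such $\Omega_i$.

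First I would apply additivity of the compactly supported Euler characteristic to the partition $M=\mathcal N(u)\sqcup\bigsqcup_i\Omega_i$; using $\chi_c(\Omega_i)=\chi(\overline{\Omega}_i)$ this gives $\chi=\chi(\mathcal N(u))+\sum_i\chi(\overline{\Omega}_i)$, equivalently
\[
\beta=\chi-\chi(\mathcal N(u))+\sum_i\bigl(1-\chi(\overline{\Omega}_i)\bigr).
\]
Let $e_2,e_1,e_0$ be the numbers of interior nodal arcs with, respectively, two, one and no endpoints in $\mathcal N^2(u)$; a routine count of the cells of $\mathcal N(u)$ (the free nodal circles and the untouched boundary components contributing nothing, their Euler characteristic being zero) yields $-\chi(\mathcal N(u))=e_2+e_1+e_0-\#\mathcal N^2(u)$. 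By Theorem~\ref{t2} every $x\in\mathcal N^2(u)$ is incident to at least $2\ord_x(u)$ nodal arcs (loops counted twice); summing this handshake relation over $\mathcal N^2(u)$ gives $2e_2+e_1\geqslant 2\sum_x\ord_x(u)$, hence
\[
-\chi(\mathcal N(u))\;\geqslant\;\sum_{x\in\mathcal N^2(u)}\bigl(\ord_x(u)-1\bigr)+\tfrac12 e_1+e_0 .
\]

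It then remains to prove $\tfrac12 e_1+e_0+\sum_i\bigl(1-\chi(\overline{\Omega}_i)\bigr)\geqslant l$, which together with the two displays gives the claimed bound. I would call a boundary component $C$ \emph{touched} if some interior nodal arc reaches it and \emph{free} otherwise. If $C$ is touched then, the sign of $u$ on a collar of $C$ being locally constant off the landing points and periodic around $C$, an even — hence $\geqslant 2$ — number of nodal-arc ends lie on $C$; since the total number of nodal-arc ends on $\partial M$ equals $e_1+2e_0$, this gives $\tfrac12 e_1+e_0=\tfrac12(e_1+2e_0)\geqslant\#\{\text{touched components}\}$. If $C$ is free, it is an entire boundary circle of a single nodal domain $\Omega_i$, and since $\partial\Omega_i\not\subseteq\partial M$ the surface $\overline{\Omega}_i$ possesses a further boundary circle, so $1-\chi(\overline{\Omega}_i)\geqslant b_i-1$ accounts for all free components contained in $\overline{\Omega}_i$; summing over $i$, $\sum_i(1-\chi(\overline{\Omega}_i))\geqslant\#\{\text{free components}\}$. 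Adding the two estimates yields $l$.

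The step I expect to be the main obstacle is precisely this last boundary bookkeeping: one must make sure that \emph{each} boundary component of $M$ genuinely contributes a unit to the bound — either through at least two landing nodal arcs, or through a nodal domain of strictly smaller Euler characteristic enclosing it — and this is exactly where the hypothesis $k\geqslant 1$ is indispensable, the inequality being false for the ground state. A subsidiary technical point is to justify the local structure of $\mathcal N(u)$ near $\partial M$ used above (finiteness of the landing points and the parity of their number on each component); this can be arranged by an odd reflection of $u$ across $\partial M$, which turns $\partial M$ into an interior locus to which Theorem~\ref{t2} and Proposition~\ref{reg} apply.
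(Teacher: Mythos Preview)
Your argument is correct in outline, but it follows a genuinely different route from the paper's. The paper collapses each boundary component of $M$ to a single point, obtaining a closed surface $\bar M$ with Euler number $\bar\chi=\chi+l$, and considers the \emph{reduced nodal graph} $\bar{\mathcal N}(u)\subset\bar M$ whose interior vertices are the points of $\mathcal N^2(u)$ and whose remaining ``boundary-component vertices'' are the collapsed boundary circles that are touched by nodal edges. It then applies the Euler inequality $f\geqslant e-v+\bar\chi$ directly to this graph: the handshake relation $2e=\sum\deg(x)$, together with $\deg(x)\geqslant 2\ord_x(u)$ for interior vertices (Theorem~\ref{t2}) and $\deg\geqslant 2$ for boundary-component vertices (sign alternation on adjacent faces), gives $e\geqslant r+\sum\ord_x(u)$ with $r$ the number of boundary-component vertices; since $v=r+\#\mathcal N^2(u)$, the bound $f\geqslant\sum(\ord_x(u)-1)+\chi+l$ drops out in one line.

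The practical difference is located exactly where you yourself flag the main obstacle. By collapsing the boundary first, the paper never needs to know \emph{where} nodal arcs land on $\partial M$, that there are finitely many landing points, or any reflection argument: it only needs that each collapsed boundary vertex has even degree, which follows from sign alternation and from the finiteness of nodal edges already guaranteed by Theorem~\ref{t2}. Your approach --- working on $M$ with its boundary and using additivity of $\chi_c$ --- is equally valid and arguably more informative (it isolates the separate excess contributions $\tfrac12 e_1+e_0$ and $\sum_i(1-\chi(\hat\Omega_i))$), but the price is the boundary bookkeeping, which forces you to invoke odd reflection to control the nodal structure near $\partial M$; the paper's philosophy is precisely to avoid this. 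A minor further point: when you treat $\overline\Omega_i$ as a compact surface with $b_i$ boundary circles, you are implicitly using the Carath\'eodory compactification $\hat\Omega_i$ (Proposition~\ref{compact} and Lemma~\ref{ends}), since the literal closure in $M$ need not be a manifold with boundary at points of $\mathcal N^2(u)$; this does not affect the Euler-characteristic count, but should be said.
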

Before giving a proof we introduce some notation that is useful in sequel. First, by Theorem~\ref{t3} the nodal set $\mathcal N(u)$ of any eigenfunction $u$ on $M$ can be viewed as a finite graph, called {\em nodal graph}. Its vertices are points in $\mathcal N^2(u)$ and the edges are connected components of $\mathcal N(u)\backslash\mathcal N^2(u)$. Below we denote by $\bar M$ a closed surface, viewed as the image of $M$ under collapsing its boundary components to points, and by $\bar\chi$ its Euler-Poincare number.  Let $\bar{\mathcal N}(u)$ be the corresponding image of a nodal graph $\mathcal N(u)$, called the {\em reduced nodal graph}. Its edges are the same nodal arcs, and there are two types of vertices: vertices that correspond to the boundary components that contain limit points of nodal lines, called {\em boundary component vertices}, and genuine vertices that correspond to the points in $\mathcal N^2(u)$, called {\em interior vertices}. By {\em faces} of the graph $\bar{\mathcal N}(u)$ we mean the connected components of the complement $\bar M\backslash\bar{\mathcal N}(u)$. Clearly, they can be identified with the nodal domains of an eigenfunction $u$. 

\begin{proof}[Proof of Lemma~\ref{nnd}]
Let $\bar{\mathcal N}(u)$ be a reduced nodal graph in $\bar M$. By Theorem~\ref{t3} it is a finite graph, and let $v$, $e$, and $f$ be the number of its vertices, edges, and faces respectively. We also denote by $r$ the number of boundary component vertices in $\bar{\mathcal N}(u)$. Recall that the number of edges satisfies the relation $2e=\sum\deg(x)$, where the sum is taken over all vertices. Since an eigenfunction $u$ has different signs on adjacent nodal domains, the degree of each boundary component vertex is at least two, and we obtain
$$
e\geqslant r+\frac{1}{2}\sum\deg(x) \geqslant r+\sum\ord_x(u),
$$
where the sum is taken over all interior vertices $x\in\mathcal N^2(u)$. The second inequality above follows from the relation $\deg(x)\geqslant 2\ord_x(u)$, see Theorem~\ref{t3}. Viewing the number of vertices $v$ as the sum $r+\sum 1$, where the sum symbol is again taken over $x\in\mathcal N^2(u)$, by the Euler inequality~\cite[p. 207]{Gib} we have
$$
f\geqslant e-v+\bar\chi\geqslant\sum (\ord_x(u)-1)+\bar\chi,
$$
where $\bar\chi=\chi+l$ is the Euler-Poincare number of $\bar M$. Since $f$ is precisely the number of nodal domains, we are done.
\end{proof}

We proceed with the second lemma. In the case when the potential of a Schrodinger equation is smooth it is due to~\cite{Na}, see also~\cite{KKP}. The proof relies essentially on Prop.~\ref{vo}.

\begin{lemma}
\label{nn}
Let $(M,g)$ be a compact Riemannian surface, possibly with boundary, and $u_1,\ldots,u_{2n}$ be a collection of non-trivial linearly independent solutions to the Schrodinger equation~\eqref{schro} with a potential $V\in K^{2,\delta}(M)$, where $0<\delta<1$. Then for a given interior point $x\in M$ there exists a non-trivial linear combination $u=\sum\alpha_iu_i$ whose vanishing order $\ord_x(u)$ at the point $x$ is at least $n$.
\end{lemma}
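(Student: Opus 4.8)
\textbf{Proof proposal for Lemma~\ref{nn}.} The plan is to reduce the statement to a dimension count for the linear map that sends a solution to the low-order coefficients of its local expansion at $x$. Fix the interior point $x$ and a coordinate chart around it as in Prop.~\ref{vo}. For each solution $w$ to~\eqref{schro} with $\ord_x(w)=N\geqslant 0$ one has, in polar coordinates centred at $x$, an approximating homogeneous harmonic polynomial $P_N$, which can be written as $a\,r^N\cos(N\theta)+b\,r^N\sin(N\theta)$ (with the convention that a constant when $N=0$ is recorded by a single coefficient). I would assemble, for each $j=0,1,\ldots,n-1$, the "degree-$j$ Fourier pair" of the expansion; more precisely, I would show there is a well-defined linear map
$$
T\colon \operatorname{span}\{u_1,\ldots,u_{2n}\}\longrightarrow \mathbf{R}^{2n-1},
$$
$$
T(w)=(\text{degree-}0\text{ coeff};\ \text{degree-}1\text{ pair};\ \ldots;\ \text{degree-}(n-1)\text{ pair}),
$$
whose total target dimension is $1+2(n-1)=2n-1$, and with the property that $T(w)=0$ forces $\ord_x(w)\geqslant n$.

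The first step is to make the map $T$ precise and check linearity. Here the key technical input is Prop.~\ref{vo} (and its quantitative form Prop.~\ref{me}): the expansion $w(y)=P_N(y-x)+O(|y-x|^{N+\delta'})$ determines $P_N$ uniquely, and more generally, for a solution $w$ and any $j$, the "degree-$j$ part" of $w$ at $x$ is well-defined because two solutions differing by $O(|y-x|^{j+\delta'})$ have the same homogeneous-degree-$j$ Fourier coefficients in any expansion up to that order. To see this cleanly I would argue inductively on $j$: subtract off the harmonic polynomial recovered at the previous steps and use the strong unique continuation / the expansion of Prop.~\ref{vo} applied to the remainder, which itself is not a solution but is controlled by the $L^p$/$K^{2,\delta}$-estimates of Prop.~\ref{me} well enough to read off its leading Fourier data. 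Linearity of $T$ in $w$ is then immediate from the uniqueness of these coefficients.

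The second step is the dimension count. Since $\dim\operatorname{span}\{u_1,\ldots,u_{2n}\}=2n$ and the target of $T$ has dimension $2n-1<2n$, the kernel of $T$ is non-trivial; pick $0\ne u=\sum\alpha_i u_i$ with $T(u)=0$. By construction this means the degree-$j$ harmonic part of $u$ at $x$ vanishes for all $j=0,\ldots,n-1$, i.e. $u(y)=O(|y-x|^{n-1+\delta'})$ and in fact (again by Prop.~\ref{vo}, since $u$ is itself a solution and its approximating polynomial has degree $\geqslant n$) $\ord_x(u)\geqslant n$. Note $u$ is non-trivial by linear independence of the $u_i$, and non-triviality of the solution is what lets us invoke Prop.~\ref{vo} to conclude the vanishing order is a well-defined integer $\geqslant n$ rather than $u$ vanishing identically (which is anyway excluded).

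\textbf{Main obstacle.} The delicate point is Step~1: defining the intermediate-degree Fourier data $w\mapsto(\text{degree-}j\text{ part})$ for a solution whose vanishing order might be strictly less than $j$, and proving this assignment is linear and well-defined in the low-regularity setting. For a smooth potential one would just Taylor-expand; here one must instead iterate Prop.~\ref{vo}/Prop.~\ref{me}, peeling off one harmonic homogeneous layer at a time and checking that the error terms $O(|y-x|^{N+\delta'})$ do not contaminate the next Fourier coefficient — this uses $\delta'>0$ in an essential way and is where the $K^{2,\delta}$-hypothesis does its work. Once that bookkeeping is set up, the rest is the elementary observation that a linear map from a $2n$-dimensional space to a $(2n-1)$-dimensional space has non-trivial kernel.
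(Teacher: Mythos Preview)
Your overall strategy --- a dimension count using that homogeneous harmonic polynomials of degree $j\geqslant 1$ on $\mathbf{R}^2$ form a $2$-dimensional space --- is the same as the paper's. The gap is exactly the one you flag in your ``Main obstacle'': the map $T$ is not well-defined as stated, and your proposed fix does not work. If $\ord_x(w)=N<j$, then Prop.~\ref{vo} gives only $w(y)=P_N(y-x)+O(\abs{y-x}^{N+\delta'})$, and the ``degree-$j$ part'' would have to be read off from the error term, which carries no harmonic-polynomial structure in the $K^{2,\delta}$ setting. Your peeling-off idea fails because after subtracting $P_N$ the remainder $w-P_N(\cdot-x)$ is no longer a solution of~\eqref{schro} (indeed $(-\Delta+V)P_N=VP_N$ need not vanish), so neither Prop.~\ref{vo} nor Prop.~\ref{me} applies to it.

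The paper sidesteps this entirely by replacing your single map $T$ with a filtration. Set $V=\operatorname{span}\{u_1,\ldots,u_{2n}\}$ and $V_i=\{w\in V:\ord_x(w)\geqslant i\}$, so $V=V_0\supset V_1\supset\cdots$. For $w\in V_i$ the leading term from Prop.~\ref{vo} is a homogeneous harmonic polynomial of degree $\geqslant i$; taking its degree-$i$ part gives a well-defined linear map $V_i/V_{i+1}\hookrightarrow\{\text{degree-}i\text{ homogeneous harmonics}\}$, which is injective by definition of the vanishing order. If $V_n=\{0\}$ this yields
\[
\dim V\leqslant \dim(V_0/V_1)+\sum_{i=1}^{n-1}\dim(V_i/V_{i+1})\leqslant 1+2(n-1)=2n-1,
\]
contradicting $\dim V=2n$. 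This is the same $1+2(n-1)$ count you wrote down, but organized so that only the \emph{leading} term of the local expansion is ever needed --- precisely what Prop.~\ref{vo} provides.
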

\begin{proof}
Let $V$ be a linear space spanned by the functions $u_1,\ldots, u_{2n}$, and $V_i$ be its subspace formed by solutions $u\in V$ whose vanishing order at $x$ is at least $i$, $\ord_x(u)\geqslant i$. Clearly, the subspaces $V_i$ form a nested sequence, $V_{i+1}\subset V_i$. The statement of the lemma says that $V_n$ is non-trivial. Suppose the contrary: the subspace $V_n$ is trivial. Then, it is straightforward to see that the dimension of $V$ satisfies the inequality
$$
\dim V\leqslant 1+\sum_{i=1}^{n-1}\dim (V_i/V_{i+1});
$$
the equality occurs if the space $V$ does not coincide with $V_1$. By Prop.~\ref{vo} the factor-space $V_i/V_{i+1}$ can be identified with a subspace of homogeneous harmonic polynomials on $\mathbf R^2$ of degree $i$. When the degree $i\geqslant 1$, the space of such polynomials has dimension two, and we obtain
$$
\dim V\leqslant 1+2(n-1)=2n-1.
$$
Thus, we arrive at a contradiction with the hypotheses of the lemma.
\end{proof}

Now we finish the proof of Theorem~\ref{t1}. Suppose the contrary to its statement. Then there exists at least $2(2-\chi-l)+2k+2$ linearly independent eigenfunctions corresponding to the eigenvalue $\lambda_k(\mu,g)$. Pick an interior point $x\in M$. By Lemma~\ref{nn} there exists a new eigenfunction $u$ whose vanishing order at the point $x$ is at least $2-\chi-l+k+1$. Now the combination with Lemma~\ref{nnd} implies that the number of the nodal domains of $u$ is at least $k+2$. Thus, we arrive at a contradiction with Courant's nodal domains theorem. \qed

\subsection{Proof of Lemma~\ref{iso}}
Since the set $\mathcal N_\Omega^2(u)$ consists of isolated points, we can view the nodal set $\mathcal N_\Omega(u)$ as a graph: the vertices are points in $\mathcal N_\Omega^2(u)$, and the edges are connected components of $\mathcal N_\Omega(u)\backslash\mathcal N_\Omega^2(u)$. Recall that the {\em degree} $\deg(x)$ of a vertex $x\in\mathcal N_\Omega^2(u)$ is defined as the number of edges incident to $x$; if one edge starts and ends at $x$, then it counts twice. The following lemma says that the degree of each vertex has to be finite.
\begin{lemma}
\label{iso1}
Under the hypotheses of Theorem~\ref{t3}, suppose that the set $\mathcal N^2_\Omega(u)$ consists of isolated points. Then the degree $\deg(x)$ of any point $x\in\mathcal N^2_\Omega(u)$ is finite.
\end{lemma}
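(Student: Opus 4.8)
The plan is to argue by contradiction. Suppose that $\deg(x)=\infty$ for some $x\in\mathcal N^{2}_\Omega(u)$; I will then construct infinitely many pairwise distinct nodal domains of $u$ in $\Omega$, which contradicts the requirement in Definition~\ref{proper} that $\Omega$ be a proper subdomain.

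The first step is to reduce to the case of nodal loops. Since $x$ is isolated in $\mathcal N^{2}_\Omega(u)$, Corollary~\ref{degree} shows that only finitely many nodal edges incident to $x$ fail to be nodal loops; hence $\deg(x)=\infty$ forces the existence of infinitely many nodal loops $\bar\gamma_{1},\bar\gamma_{2},\ldots$ incident to $x$, and by the last assertion of Corollary~\ref{degree} this sequence has to contract to $x$, so that all but finitely many $\bar\gamma_{i}$ lie in any prescribed neighbourhood of $x$. By Prop.~\ref{reg} each nodal edge $\gamma_{i}$ is an embedded $C^{1}$-arc, and, being a loop, its closure is $\gamma_{i}\cup\{x\}$; thus $\bar\gamma_{i}$ is a Jordan curve contained entirely in the nodal set $\mathcal N(u)$. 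Fixing a coordinate ball $B$ around $x$ with $\bar B\subset\Omega$, identified with a disk in $\mathbf R^{2}$, all but finitely many $\bar\gamma_{i}$ are contained in $B$; by the Jordan curve theorem each such $\bar\gamma_{i}$ encloses a Jordan domain $D_{i}$ with $\bar D_{i}\subset B$ and $\partial D_{i}=\bar\gamma_{i}$, and since the loops shrink to $x$ the diameters of the $D_{i}$ tend to zero.

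The next step is to recognise each $D_{i}$ as a union of nodal domains. Since $\partial D_{i}=\bar\gamma_{i}\subset\mathcal N(u)$ and $\bar D_{i}$ lies in $\Omega$, the set $D_{i}$ is simultaneously open and closed in $\Omega\setminus\mathcal N(u)$, hence a union of nodal domains of $u$ in $\Omega$; it cannot be contained in $\mathcal N(u)$, for then $u$ would vanish on a non-empty open set and therefore identically on $M$ by the strong unique continuation property, Prop.~\ref{uc}. Thus $D_{i}$ contains some nodal domain $\Delta_{i}$ of $u$ in $\Omega$. Finally, no nodal domain can equal $\Delta_{i}$ for infinitely many $i$: such a domain would be contained in $D_{i}$ for infinitely many $i$, and since the diameters of the $D_{i}$ tend to zero it would then be a single point, which is impossible for a non-empty open set. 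Consequently the family $\{\Delta_{i}\}$ contains infinitely many distinct nodal domains of $u$ in $\Omega$, giving the desired contradiction and forcing $\deg(x)<\infty$.

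I expect the only delicate point to be the passage from $\deg(x)=\infty$ to this geometric picture of shrinking Jordan curves: this rests on Corollary~\ref{degree} (finiteness of the non-loop edges and the forced contraction of the nodal loops) together with the elementary but essential observation that, because a nodal loop has its entire closure inside $\mathcal N(u)$, the Jordan domain it bounds is a union of nodal domains. Everything that follows is a pigeonhole argument playing the shrinking regions $D_{i}$ against the finite supply of nodal domains guaranteed by properness of $\Omega$.
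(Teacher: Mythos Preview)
Your argument is correct, and it takes a genuinely different route from the paper's proof. The paper argues as follows: after reducing (via Corollary~\ref{degree}) to showing that there are only finitely many nodal loops at $x$, it passes to the one-point compactification $\bar\Omega$ of $\Omega$ at each boundary component, selects a finite subgraph consisting of the single vertex $x$ together with $m+2-\bar\chi$ of the nodal loops (where $m$ is the number of nodal domains and $\bar\chi$ the Euler number of $\bar\Omega$), and applies Euler's inequality $f\geqslant e-v+\bar\chi$ to obtain $f\geqslant m+1$ faces, contradicting $f\leqslant m$.

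By contrast, you exploit the second conclusion of Corollary~\ref{degree} --- that an infinite family of nodal loops at an isolated point of $\mathcal N^2_\Omega(u)$ must contract to $x$ --- which the paper does not invoke here. This lets you work entirely inside a small coordinate disk, use the Jordan curve theorem to produce shrinking Jordan domains $D_i$, observe that each $D_i$ is a nonempty union of nodal domains (since $\partial D_i\subset\mathcal N(u)$ and unique continuation rules out $D_i\subset\mathcal N(u)$), and finish by pigeonhole against the finite supply of nodal domains. Your approach is more elementary and self-contained: it avoids the compactification of $\Omega$ and the graph-theoretic Euler inequality. The paper's approach, on the other hand, is uniform with the rest of Section~\ref{proofs}, where the same Euler-inequality machinery is reused to prove the remaining parts of Lemma~\ref{iso}; so while your argument is cleaner for this particular lemma, the paper's method has the advantage of fitting into a single combinatorial framework that handles all the finiteness statements at once.
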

\begin{proof}
By Corollary~\ref{degree} it is sufficient to show that the number of nodal loops that start and end at a given point $x\in\mathcal N^2_\Omega(u)$ is finite. Suppose the contrary, that is the number of such nodal loops is infinite. Let $\bar\Omega$ be a compactification of $\Omega$, obtained by adding one point for each boundary component. By Prop.~\ref{compact} it is homeomorphic to a closed surface, and we denote by $\bar\chi$ its Euler-Poincare number. Let $\Gamma$ be a subgraph in the nodal graph formed by one vertex $x$ and $m+2-\bar\chi$ nodal loops that start and end at $x$, where $m$ is the number of nodal domains of $u$ in $\Omega$. Denote by $v=1$, $e=m+2-\bar\chi$, and $f$ the number of vertices, edges, and faces of $\Gamma$ respectively. Here by the faces of $\Gamma$ we mean the connected components of $\bar\Omega\backslash\Gamma$. Clearly, they are unions of nodal domains, and $f\leqslant m$. On the other hand, viewing $\Gamma$ as a graph in $\bar\Omega$, by Euler's inequality~\cite[p.~207]{Gib}, we obtain
$$
f\geqslant e-v+\bar\chi=m+1.
$$
This contradiction demonstrates the lemma.
\end{proof}

Now we prove the statement of Lemma~\ref{iso}: the set $\mathcal N_\Omega^2(u)$ is finite, and the complement $\mathcal N_\Omega(u)\backslash\mathcal N^2(u)$ has finitely many connected components. The argument below is based on the results in Sect.~\ref{prem}, and is close in the spirit to the one in~\cite[Sect.~3]{KKP}.

Let $\bar\Omega$ be a closed surface obtained by collapsing boundary components of $\Omega$ to points. By $\bar{\mathcal N}_\Omega$ we denote the {\em reduced nodal graph} in $\bar\Omega$, defined in the proof of Theorem~\ref{t1}. Recall that its edges are the same nodal edges, and there are two types of vertices: vertices that correspond to the boundary components of $\Omega$ that contain limit points of nodal lines, called {\em boundary component vertices}, and genuine vertices that correspond to the points in $\mathcal N^2_\Omega(u)$, called {\em interior vertices}. For a proof of the lemma it is sufficient to show that $\bar{\mathcal N}_\Omega(u)$ is a finite graph. Our strategy is to show that:
\begin{itemize}
\item[(i)] each boundary component vertex has a finite degree and
\item[(ii)] the number of interior vertices is finite in $\Omega$.
\end{itemize}
We are going to construct new graphs in $\bar\Omega$ by {\em resolving  interior vertices} in the following fashion. Let $x\in\mathcal N^2_\Omega(u)$ be an interior vertex. By Lemma~\ref{iso1} its degree is finite, and by Lemma~\ref{iso2} it is an even integer $2n$. Let $B$ be a small disk centered at $x$ that does not contain other vertices. By Corollary~\ref{degree2} we may assume that non-incident to $x$ nodal edges lie in the complement $\Omega\backslash B$. Besides, since the degree is finite, we may also assume  that each nodal loop incident to $x$ intersects $\partial B$ in at least two points. Consider the intersections of nodal edges with $B$, and let $\Gamma_i$, where $i=0,\ldots, 2n-1$, be their connected components incident to $x$. Pick points $y_i\in\bar\Gamma_i\cap\partial B$; one for each $i=0,\ldots,2n-1$. By the resolution of a vertex $x$ we mean a new graph obtained by removing sub-arcs between $x$ and $y_i$ in each nodal edge incident to $x$ and rounding-off them by non-intersecting arcs in $B$ joining the points $y_{2j}$ and $y_{2j+1}$. If there was an edge that starts and ends at $x$, then such a procedure may make it into a loop. We remove all such loops, if they occur.  A new graph, obtained by the resolution of one vertex, has one vertex less and at most as many faces as the original graph.

\noindent
{\em Proof of~(i).} Suppose the contrary. Let us resolve all interior vertices in $\bar{\mathcal N}_\Omega(u)$ in the way described above. The result is a graph $\Gamma$ whose only vertices are boundary component vertices in $\bar{\mathcal N}_\Omega(u)$; denote by $v$ their number. Besides, it has at most as many faces as $\bar{\mathcal N}_\Omega (u)$, that is not greater than the number of nodal domains. Since there is a boundary component vertex in $\bar{\mathcal N}_\Omega(u)$ whose degree is infinite, the same vertex has an infinite degree in $\Gamma$. Let us remove all edges in $\Gamma$ except for at least $v+m+1-\bar\chi$ of them, where $m$ is the number of nodal domains and $\bar\chi$ is the Euler-Poincare number of $\bar\Omega$. The result is a finite graph; it has precisely $v$ vertices, and we denote by $e$ and $f$ the number of its edges and faces respectively. By Euler's inequality, we obtain
$$
f\geqslant e-v+\bar\chi=m+1.
$$
On the other hand, since removing an edge does not increase the number of faces, we have $f\leqslant m$.
Thus, we arrive at a contradiction.

\noindent
{\em Proof of~(ii).} Suppose the contrary, and let $v$ be a number of boundary component vertices in $\bar{\mathcal N}_\Omega(u)$. Let us resolve all interior vertices except for $v+m+1-\bar\chi$ of them. The result is a finite graph; we denote by $v'$, $e'$, and $f'$ the number of its vertices, edges, and faces respectively. Clearly, we have
$$
v'\leqslant 2v+m+1-\bar\chi\quad\text{and}\quad e'\geqslant 2(v+m+1-\bar\chi),
$$
where in the second inequality we used Lemma~\ref{iso2}, saying that the degree of each vertex $x\in\mathcal N_\Omega^2(u)$ is at least $4$. Combining these inequalities with the Euler inequality, we obtain
$$
f'\geqslant e'-v'+\bar\chi\geqslant m+1.
$$
On the other hand, we have $f'\leqslant m$. Thus, we arrive at a contradiction.
\qed

\section{Eigenvalue problems on singular Riemannian surfaces}
\label{sing}
\subsection{Eigenvalue problems on surfaces with measures}
The purpose of this section is to discuss multiplicity bounds on singular Riemannian surfaces. We start with recalling a useful general setting of eigenvalue problems on surfaces with measures, following~\cite{GK}. 

Let $(M,g)$ be a compact Riemannian surface, possibly with boundary, and $\mu$ be a finite absolutely continuous (with respect to $\mathit{dVol}_g$) Radon measure on $M$ that satisfies the {\em decay condition}
\begin{equation}
\label{cem}
\mu(B(x,r))\leqslant Cr^\delta,\qquad\text{for any}\quad r>0\text{ and }x\in M, 
\end{equation}
and some constants $C$ and $\delta>0$. Denote by $L_2^1(M,\mathit{Vol}_g)$ the space formed by distributions whose derivatives are in $L_2(M,\mathit{Vol}_g)$. Then by the results of Maz'ja~\cite{Ma}, see also~\cite{GK}, the embedding
$$
L_2(M,\mu)\cap L_2^1(M,\mathit{Vol}_g)\subset L_2(M,\mu)
$$
is compact, the Dirichlet form $\int\abs{\nabla u}^2\mathit{dVol}_g$ is closable in $L_2(M,\mu)$, and its spectrum is discrete. We denote by
$$
0=\lambda_0(g,\mu)<\lambda_1(g,\mu)\leqslant\ldots\lambda_k(g,\mu)\leqslant\ldots
$$
the corresponding eigenvalues, and by $m_k(g,\mu)$ their multiplicities. As above, we always suppose that the Dirichlet boundary hypothesis is imposed, if the boundary of $M$ is non-empty. The eigenfunctions corresponding to an eigenvalue $\lambda_k(g,\mu)$ are  distributional solutions to the Schrodinger equation
\begin{equation}
\label{eigenf}
-\Delta_gu=\lambda_k(g,\mu)\mu u\qquad\text{on~ }M.
\end{equation}
The latter fact ensures that the analysis in Sect.~\ref{prem}-\ref{proofs} carries over to yield the following result.
\begin{theorem}
\label{t4}
Let $(M,g)$ be a smooth compact Riemannian surface, possibly with boundary, endowed with a finite absolutely continuous Radon measure $\mu$ that satisfies hypothesis~\eqref{cem}. Then the multiplicity $m_k(g,\mu)$ of a Laplace eigenvalue $\lambda_k(g,\mu)$ satisfies the inequality
$$
m_k(g,\mu)\leqslant 2(2-\chi-l)+2k+1\qquad\text{for any}\quad k=1,2,\ldots,
$$
where $\chi$ stands for the Euler-Poincare number of $M$ and $l$ is the number of boundary components.
\end{theorem}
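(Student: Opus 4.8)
The plan is to deduce Theorem~\ref{t4} from the proof of Theorem~\ref{t1}, by viewing the measure eigenvalue problem as a Schr\"odinger equation with a potential in a Kato-type class. Write $d\mu=w\,\mathit{dVol}_g$ with $w\in L^1(M,\mathit{Vol}_g)$, $w\geqslant 0$, and fix an eigenvalue $\lambda_k(g,\mu)$ with $k\geqslant 1$. Every eigenfunction $u$ of $\lambda_k(g,\mu)$ is a $W^{1,2}$-solution of \eqref{eigenf}, which is exactly the Schr\"odinger equation \eqref{schro} with the potential $V=-\lambda_k(g,\mu)\,w$. Thus the whole $\lambda_k(g,\mu)$-eigenspace consists of solutions of one and the same Schr\"odinger equation, and the only substantive thing to verify is that this $V$ belongs to $K^{2,\delta'}(M)$ for some $\delta'\in(0,1)$; once this is done, the entire apparatus of Sections~\ref{prem}--\ref{proofs} becomes available for $u$.

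The key point is that the decay hypothesis \eqref{cem} forces $w\in K^{2,\delta'}(M)$ for every $\delta'$ with $0<\delta'<\min(\delta,1)$. Indeed, for $x\in M$ and small $r>0$, decomposing $B(x,r)$ into dyadic annuli $A_j=B(x,r/2^{j})\setminus B(x,r/2^{j+1})$ and using $\abs{x-y}\geqslant r/2^{j+1}$ on $A_j$ together with \eqref{cem} gives
$$
\int\limits_{B(x,r)}\abs{x-y}^{-\delta'}\,d\mu(y)\leqslant\sum_{j=0}^{\infty}\Bigl(\frac{r}{2^{j+1}}\Bigr)^{-\delta'}\mu\bigl(B(x,r/2^{j})\bigr)\leqslant C\,2^{\delta'}r^{\delta-\delta'}\sum_{j=0}^{\infty}2^{-j(\delta-\delta')},
$$
and since $\delta'<\delta$ the series converges, so the left-hand side is $O(r^{\delta-\delta'})$ and tends to $0$ as $r\to 0$. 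Hence $V=-\lambda_k(g,\mu)\,w\in K^{2,\delta'}(M)$. In particular, by \cite{Si} the eigenfunctions are H\"older continuous (so $\mathcal{N}(u)$ is well defined), the strong unique continuation property (Prop.~\ref{uc}) holds, and the approximation estimates of Prop.~\ref{vo} and Prop.~\ref{me}, as well as the nodal regularity results of \cite{HoHo,HoHoNa}, apply to $u$.

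It remains to supply Courant's nodal domain theorem for the problem \eqref{eigenf}. The eigenvalues $\lambda_k(g,\mu)$ admit the usual variational characterisation through the Rayleigh quotient $\int\abs{\nabla u}^2\,\mathit{dVol}_g\big/\int u^2\,d\mu$ on $L_2(M,\mu)\cap L_2^1(M,\mathit{Vol}_g)$ (see \cite{GK,Ma}), so the standard Courant--Hilbert argument, see~\cite{CH}, combined with the unique continuation property just established and the continuity of eigenfunctions up to the boundary, shows that every eigenfunction of $\lambda_k(g,\mu)$ has at most $k+1$ nodal domains. Consequently $M$ is a proper subdomain with respect to any such $u$, Theorems~\ref{t2} and~\ref{t3} hold, and the counting argument at the end of the proof of Theorem~\ref{t1} applies verbatim: were the $\lambda_k(g,\mu)$-eigenspace of dimension at least $2(2-\chi-l)+2k+2$, Lemma~\ref{nn} would produce an eigenfunction vanishing to order at least $2-\chi-l+k+1$ at a chosen interior point, whence Lemma~\ref{nnd} would force at least $k+2$ nodal domains, contradicting the nodal domain bound.

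The main obstacle is bookkeeping rather than a new idea: one has to confirm that $V=-\lambda_k(g,\mu)\,w$ lies in a Kato-type class to which the regularity and unique-continuation machinery of \cite{Si,HoHo,HoHoNa} applies --- this is the content of the dyadic estimate above --- and that Courant's theorem survives the replacement of the smooth volume by the measure $\mu$. No step requires estimates beyond those already developed in Sections~\ref{prem}--\ref{proofs}, and I expect the write-up to amount to little more than making these two observations precise.
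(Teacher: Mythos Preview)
Your proposal is correct and follows essentially the same route as the paper: show that the density of $\mu$ lies in $K^{2,\delta'}(M)$ for some $\delta'<\delta$, verify that Courant's nodal domain theorem holds for the measure eigenvalue problem via the variational characterisation, and then invoke Theorem~\ref{t3} together with Lemmas~\ref{nnd} and~\ref{nn} exactly as in the proof of Theorem~\ref{t1}. The only cosmetic difference is in the Kato-class estimate: you use a dyadic-annulus decomposition, while the paper uses Fubini and a change of variable to reach the same $O(r^{\delta-\delta'})$ bound.
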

\begin{proof}
First, we claim that the decay hypothesis~\eqref{cem} on the measure $\mu$ implies that its density belongs to the space $K^{2,\delta'}(M)$ for some $0<\delta'<\delta$. Indeed, by Fubini's theorem and the change of variable formula, we obtain
\begin{multline*}
\int\limits_{B(x,r)}\abs{x-y}^{-\delta'}d\mu=\int\limits_{r^{-\delta'}}^{+\infty}\mu\{(y:\abs{x-y}^{-\delta'}\geqslant t)\}dt=\int\limits_{r^{-\delta'}}^{+\infty}\mu(B(x,t^{-1/{\delta'}}))dt\\
=\delta'\int\limits_0^r s^{-\delta'-1}\mu(B(x,s))ds\leqslant C\delta'\int\limits_0^r s^{\delta-\delta'-1}ds.
\end{multline*}
Second, using a variational characterisation of eigenvalues $\lambda_k(g,\mu)$, it is also straightforward to check that the standard proof of Courant's nodal domains theorem carries over for eigenfunctions $u$, which satisfy~\eqref{eigenf}. Hence, Theorem~\ref{t3} applies, and then the argument in the proof of Theorem~\ref{t1} carries over directly to yield the multiplicity bounds.
\end{proof}
Note that, since the Dirichlet energy is conformally invariant, if the measure $\mu$ is the volume measure of a metric $h$ conformal to $g$, then the quantities $\lambda_k(g,\mu)$ are precisely the Laplace eigenvalues of a metric $h$. More generally, the eigenvalue problems on surfaces with singular metrics can be also often viewed as particular instances of the setting of eigenvalues on measures. Below we discuss this point of view in more detail.

Let $(M,g)$ be a Riemannian surface, and $h$ be a Riemannian metric of finite volume defined on the set $M\backslash S$, where $S$ is a closed nowhere dense subset of zero measure. Here the set $S$ plays the role of a singular set of $h$ on $M$. Denote by $\mu$ the volume measure of the metric $h$. In the literature, see e.g.~\cite{Chee}, the Dirichlet spectrum of a singular metric $h$ is normally defined as  the spectrum of the Dirichlet form 
\begin{equation}
\label{d:form}
u\longmapsto\int_{M\backslash S}\abs{\nabla u}^2d\mathit{Vol}_h
\end{equation}
defined on the space $\mathcal C\subset L_2(M,\mu)$ formed by smooth compactly supported functions in $M\backslash S$. Suppose that the set $S$ has zero Dirichlet capacity, the metric $h$ is conformal on $M\backslash S$ to the metric $g$, and its volume measure $\mu$ satisfies the decay hypothesis~\eqref{cem}. Then, it is straightforward to see that the  spectrum of $h$ is discrete and coincides with the set of eigenvalues $\lambda_k(g,\mu)$ defined above. Moreover, the construction makes sense even if a metric $h$ is not smooth on $M\backslash S$ as long as the Dirichlet form~\eqref{d:form} is well-defined. Theorem~\ref{t4} gives multiplicity bounds for such eigenvalue problems. We end with discussing two examples: metrics with conical singularities and, more generally, Alexandrov surfaces of bounded integral curvature.
\subsection{Example~I: metrics with conical singularities}
Let $M$ be a closed smooth surface, and $h$ be a metric on $M$ with a number of conical singularities. Recall that a point $p\in M$ is called the {\em conical singularity} of order $\alpha>-1$ (or angle $2\pi(\alpha+1)$) if in an appropriate local complex coordinate the metric $h$ has the form $\abs{z}^{2\alpha}\rho(z)\abs{dz}^2$, where $\rho(z)>0$. In other words, near $p$ the metric is conformal to the Euclidean cone of total angle $2\pi(\alpha+1)$. As is known, such a metric $h$ is conformal to a genuine Riemannian metric $g$ on $M$ away from the singularities. If a surface $M$ has a non-empty boundary, we do not exclude an infinite number of conical singularities accumulating to the boundary, and suppose that the volume measure $\mathit{Vol}_h$ satisfies the decay hypothesis~\eqref{cem}. For a surface with a finite number of conical singularities the hypothesis on the volume measure is always satisfied. The Dirichlet integral with respect to the metric $h$ is defined as an improper integral; by the conformal invariance, it satisfies the relation
$$
\int_M\abs{\nabla u}_h^2\mathit{dVol}_h=\int_M\abs{\nabla u}_g^2\mathit{dVol}_g
$$
for any smooth function $u$. Thus, we conclude that the Laplace eigenvalues and their multiplicities of a metric $h$ coincide with the quantities $\lambda_k(g,\mathit{Vol}_h)$ and $m_k(g,\mathit{Vol}_h)$, defined above,
and Theorem~\ref{t4} yields the multiplicity bounds. Mention that if a metric $h$ has only a finite number of conical singularities, then the  multiplicity bounds can be also obtained from arguments in~\cite{KKP}.

\subsection{Example~II: Alexandrov surfaces of bounded integral curvature}
The most significant class of surfaces, illustrating our approach, is formed by the so-called Alexandrov surfaces of bounded integral curvature.  Below we recall this notion and give a brief outline of its relevance to our setting; more details and references on the subject can be found in the surveys~\cite{Re93,Tro}. Eigenvalue problems on Alexandrov surfaces of bounded integral curvature are treated in detail in~\cite{GK2}.

\begin{defin*}
A metric space $(M,d)$, where $M$ is a compact smooth surface, is called the {\em Alexandrov surface of bounded integral curvature} if:
\begin{itemize}
\item[(i)] the topology induced by $d$ coincides with the original surface topology on $M$;
\item[(ii)] the metric space $(M,d)$ is a {\em geodesic length space}, that is any two points $x$ and $y\in M$ can be joined by a path whose length is $d(x,y)$;
\item[(iii)] the metric  $d$ is a $C^0$-limit of distances of smooth Riemannian metrics $g_n$ on $M$ whose integral curvatures are bounded, that is
$$
\sup_n\int_M\abs{K_{g_n}}d\mathit{Vol}_{g_n}<+\infty,
$$
where $K_{g_n}$ stands for the Gauss curvature of a metric $g_n$.
\end{itemize}
\end{defin*}
This is a large class of singular surfaces that contains, for example, all polyhedral surfaces as well as surfaces with conical singularities and their limits under the integral curvature bound. The hypothesis~(iii) implies that after a selection of a subsequence the signed measures $K_{g_n}d\mathit{Vol}_{g_n}$ converge weakly to a measure $\omega$ on $M$. By the result of Alexandrov~\cite{AZ}, the measure $\omega$ is an intrinsic characteristic of $(M,g)$; it does not depend on an approximating sequence of Riemannian metrics $g_n$, and is called the {\em curvature measure} of an Alexandrov surface.  As an example, consider the surface of a unit cube in $\mathbf{R}^3$. The metric on it is defined as the infimum of Euclidean lengths of all paths that lie on the surface of the cube and join two given points. As is known~\cite{Re93,Tro}, its curvature measure is $\sum(\pi/2)\delta_p$, where $\delta_p$ is the Dirac mass and the sum runs over all vertices $p$ of the cube.

Recall that a point $x\in M$ is called the {\em cusp}, if $\omega(x)=2\pi$. By the results of Reshetnyak~\cite{Re} and Huber~\cite{Huber}, any Alexandrov surface of bounded integral curvature and without cusps can be regarded as being "conformally equivalent" to a smooth Riemannian metric on a background compact surface. This means that the distance function on such a surface has the form
$$
d(x,y)=\inf_\gamma\left\{\int_0^1e^{u(\gamma(t))}\abs{\dot\gamma(t)}_gdt\right\}
$$
for some function $u$ and a smooth background Riemannian metric $g$; the infimum above is taken over smooth paths $\gamma$ joining $x$ and $y$. The conformal factor $e^u$ here can be very singular, and is an $L^{2p}$-function, where $p>1$. More precisely, the function $u$ is the difference of weakly subharmonic functions~\cite{Re,Re93},  and the set
$$
S=\{x\in M : e^u(x)=0\}
$$
has zero capacity in $M$, see~\cite[Theorem 5.9]{HK}.

Thus, an Alexandrov surface without cusps can be viewed as a surface with a "Riemannian metric" $h=e^{u}g$ on $M\backslash S$, whose distance function is precisely the original metric $d$. This "Riemannian metric"  yields the {\em Alexandrov volume measure} $d\mu_h=e^{2u}d\mathit{Vol}_g$, which is an one more intrinsic characteristic of $(M,d)$; it can be also defined via approximations by Riemannian metrics. More precisely, in~\cite{AZ} Alexandrov and Zalgaller show  that if $g_n$ is a sequence of Riemannian metrics that satisfy the hypothesis~(iii) in the definition of an Alexandrov surface, then its volume measures $\mathit{Vol}_{g_n}$ converge weakly to $\mu_h$.

Since the set $S$ has zero capacity, by conformal invariance it is straightforward to conclude that the relation
$$
\int_{M\backslash S}\abs{\nabla u}_h^2d\mu_h=\int_M\abs{\nabla u}_g^2\mathit{dVol}_g
$$
holds for any smooth function $u$. Thus, the eigenvalues $\lambda_k(g,\mu_h)$ of the Dirichlet form $\int\abs{\nabla u}^2\mathit{dVol}_g$ in $L_2(M,\mu_h)$ are indeed natural versions of Laplace eigenvalues on an Alexandrov surface without cusps.  Since $e^{2u}$ is an $L^p$-function, where $p>1$, we conclude that the Alexandrov volume measure $\mu_h$ satisfies the decay hypothesis~\eqref{cem}. In particular, the multiplicities $m_k(g,\mu_h)$ are finite and satisfy inequalities in Theorem~\ref{t4}.

\appendix
\section{Appendix: Cheng's structure theorem}
\label{stru}
The purpose of this section is to give details on Cheng's structure theorem~\cite{Cheng}, mentioned in Sect.~\ref{intro}. It is based on the following lemma.
\begin{lemma}
\label{aux}
Let $u$ be a $C^{1,1}$-smooth function defined in a neighbourhood of the origin in $\mathbf{R}^n$ that satisfies the relation
\begin{equation}
\label{ap:vo}
u(x)=P_N(x)+O(\abs{x}^{N+\delta})\qquad\text{as~ }x\to 0,
\end{equation}
where $P_N$ is a homogeneous polynomial of order $N$ such that $\abs{\nabla P_N(x)}\geqslant C\abs{x}^{N-1}$. Then there exists a neighbourhood $U$ of the origin and a Lipschitz homeomorphism $\Phi$ of it that preserves the origin and such that $u(x)=P_N(\Phi(x))$ for any $x\in U$. Moreover, if $u$ is $C^2$-smooth, then $\Phi$ is a $C^1$-diffeomorphism.
\end{lemma}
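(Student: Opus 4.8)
The plan is to build $\Phi$ by matching $u$ with $P_N$ along the integral curves of a suitable vector field, in the spirit of \emph{Moser's deformation method}. On a punctured neighbourhood of the origin the hypothesis $\abs{\nabla P_N(x)}\geq C\abs{x}^{N-1}$ guarantees that $\nabla P_N$ never vanishes, so the vector field $V(y)=\nabla P_N(y)/\abs{\nabla P_N(y)}^{2}$ is well defined and smooth there, and along its integral curves $P_N$ increases at unit rate, since $\tfrac{d}{dt}P_N(\gamma(t))=\nabla P_N(\gamma)\cdot V(\gamma)=1$. I would then put $\sigma(x)=u(x)-P_N(x)$ and define $\Phi(x)$ to be the point reached from $x$ by flowing along $V$ for the signed time $\sigma(x)$, i.e.\ $\Phi(x)=\gamma_x(1)$ where $\gamma_x'=\sigma(x)\,V(\gamma_x)$ and $\gamma_x(0)=x$. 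Then $P_N(\Phi(x))=P_N(x)+\sigma(x)=u(x)$ by construction, and $\Phi(0)=0$ since $\sigma(0)=0$.

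The estimates making this rigorous all come from Gronwall's inequality together with the two bounds $\abs{\sigma(x)}\leq M\abs{x}^{N+\delta}$ and $\abs{\nabla P_N(y)}\geq C\abs{y}^{N-1}$. A \emph{continuity-method} (bootstrap) argument will show that for $\abs{x}$ small the curve $\gamma_x$ exists on $[0,1]$ and stays inside the annulus $\tfrac{2}{3}\abs{x}\leq\abs{\gamma_x(t)}\leq\tfrac{4}{3}\abs{x}$; in particular it never approaches the origin and $V$ stays bounded along it. Integrating the flow equation on this annulus gives $\abs{\Phi(x)-x}\leq C'\abs{x}^{1+\delta}$, so $\Phi$ is continuous, fixes the origin, and is differentiable there with $D\Phi(0)=\mathrm{Id}$. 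Since $\sigma\in C^{1,1}\subset\mathrm{Lip}$, the standard Lipschitz dependence of ODE solutions on a parameter --- Gronwall applied to the variational equation, the annular bounds controlling $DV(\gamma_x)=O(\abs{x}^{-N})$ --- shows that $\Phi$ is Lipschitz on each dyadic annulus with a constant independent of the annulus, hence Lipschitz near the origin.

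It remains to see that $\Phi$ is a homeomorphism of a neighbourhood of the origin. Away from $0$ the map $x\mapsto\Phi(x)$ is as smooth as $\sigma$, with differential that decomposes, by the chain rule, as the derivative in its initial point of the time-$\sigma(x)$ flow map of $V$ plus the rank-one term $V(\Phi(x))\otimes D\sigma(x)$. Rescaling on the annuli $\{2^{-k-2}\leq\abs{x}\leq 2^{-k}\}$, the maps $z\mapsto 2^{k}\Phi(2^{-k}z)$ are $C^{0}$-perturbations of the identity of size $O(2^{-k\delta})$, uniformly in $k$; combined with the comparison $\abs{\Phi(x)}\asymp\abs{x}$ this forces $\Phi$ to be injective on a punctured neighbourhood of $0$, and together with $D\Phi(0)=\mathrm{Id}$ and invariance of domain it follows that $\Phi$ is a homeomorphism onto a neighbourhood of $0$. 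If $u\in C^{2}$, then $\sigma\in C^{1}$, and the accompanying first-order bound $\abs{D\sigma(x)}=o(\abs{x}^{N-1})$ --- the derivative form of the expansion~\eqref{ap:vo} available in the smooth setting --- makes the rescaled maps $C^{1}$-perturbations of the identity and shows $D\Phi$ extends continuously to the origin with value $\mathrm{Id}$, so that $\Phi$ is a $C^{1}$-diffeomorphism. I expect the main obstacle to be exactly this behaviour at the origin: assembling the scale-by-scale (dyadic-annulus) estimates into a genuine homeomorphism up to the singular point, and, in the $C^{2}$ case, securing continuity of $D\Phi$ at $0$, which requires controlling the first derivatives of the error $u-P_N$ and not merely the error itself.
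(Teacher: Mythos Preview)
Your flow along $V=\nabla P_N/|\nabla P_N|^{2}$ for time $\sigma(x)=u(x)-P_N(x)$ is exactly the device the paper invokes (it defers to the argument of Cheng's Lemma~2.4, which is this construction), so the overall route is the same. The gap is in the Lipschitz bound. You correctly decompose $D\Phi(x)$ into the spatial derivative of the time-$\sigma(x)$ flow plus the rank-one piece $V(\Phi(x))\otimes D\sigma(x)$; Gronwall on the variational equation handles the first term uniformly on dyadic annuli, since $|\sigma(x)|\cdot\sup|DV|=O(|x|^{N+\delta})\cdot O(|x|^{-N})=O(|x|^{\delta})$. But the rank-one term has size $\asymp |x|^{-(N-1)}|D\sigma(x)|$, and the only information you feed in is ``$\sigma\in C^{1,1}\subset\mathrm{Lip}$''. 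Even using $\nabla\sigma(0)=0$ and the Lipschitz bound on $\nabla\sigma$, that gives merely $|D\sigma(x)|\le L|x|$, so the rank-one contribution is $O(|x|^{2-N})$, unbounded once $N\ge 3$. The same issue hits your $C^1$ claim: $C^2$ regularity together with $\sigma=O(|x|^{N+\delta})$ yields only $D\sigma(x)=o(|x|)$, not the $o(|x|^{N-1})$ you assert.

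This is precisely the step the paper isolates first. Before running the flow it upgrades the expansion to first order, writing the remainder as $\alpha(x)|x|^{N+\delta'-1}$ and arguing that $\alpha$ is $C^1$ at the origin, so that
\[
\nabla u(x)=\nabla P_N(x)+O(|x|^{N+\delta'-1}).
\]
With $|D\sigma(x)|=O(|x|^{N-1+\delta'})$ in hand your rank-one term becomes $O(|x|^{\delta'})$ and the rest of your scheme goes through verbatim. Be warned, though, that this gradient bound does not follow automatically from the two stated hypotheses when $N\ge 3$: one can build $C^{1,1}$ remainders with $\sigma=O(|x|^{N+\delta})$ but $\nabla\sigma$ only of order $|x|^{(N+\delta)/2}$ (take $\sigma''$ bounded and oscillating like $\cos(|x|^{-a})$), and then no homeomorphism $\Phi$ with $u=P_N\circ\Phi$ exists at all. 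The paper's ``straightforward'' is therefore optimistic for general $C^{1,1}$ functions; in the intended application to Schr\"odinger solutions the first-order expansion is supplied separately by the elliptic estimates of~\cite{HoHoNa}.
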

\begin{proof}[Comments on the proof]
The second term on the right-hand side can be viewed as the product $\alpha(x)\abs{x}^{N+\delta'-1}$, where $0<\delta'<\delta$ and $\alpha(x)$ is a function that is $C^1$-smooth away from the origin and behaves like $O(\abs{x}^{1+\delta-\delta'})$ as $x\to 0$. It is then straightforward to see that $\alpha$ is $C^1$-smooth in a neighbourhood of the origin, and differentiating relation~\eqref{ap:vo}, we obtain
$$
\nabla u(x)=\nabla P_N(x)+O(\abs{x}^{N+\delta'-1})\qquad\text{as~ }x\to 0.
$$
Given the last relation, if $u$ is $C^2$-smooth, the existence of the $C^1$-diffeomorphism $\Phi$ follows from the argument in the proof of~\cite[Lemma~2.4]{Cheng}. This argument also works when $u$ is $C^{1,1}$-smooth, and in this case it yields a local Lipschitz homeomorphism $\Phi$ such that $u(x)=P_N(\Phi(x))$.
\end{proof}

In dimension two any homogeneous harmonic polynomial of degree $N\geqslant 1$ satisfies the hypothesis $\abs{\nabla P_N(x)}\geqslant C\abs{x}^{N-1}$, and combining the lemma above with Prop.~\ref{vo}, we obtain the following improved version of Cheng's result.
\begin{CST}
Let $u$ be a $C^{1,1}$-smooth solution of the Schrodinger equation 
\begin{equation}
\label{schro2}
(-\Delta+V)u=0\qquad\text{on~}\Omega\subset\mathbf{R}^2,
\end{equation}
where $V\in K^{2,\delta}(\Omega)$. Then for any nodal point $p\in\mathcal N(u)$ there is a neighbourhood $U$ and a Lipschitz homeomorphism $\Phi$ of $U$ onto a neighborhood of the origin such that $u(x)=P_N(\Phi(x))$ for any $x\in U$, where $P_N$ is an approximating homogeneous harmonic polynomial at $p$. Moreover, if $u$ is $C^2$-smooth, then $\Phi$ is a $C^1$-diffeomorphism.
\end{CST}
In~\cite{Cheng} Cheng also states similar results in arbitrary dimension. However, in dimension $n>2$ there are homogeneous harmonic polynomials for which the hypothesis $\abs{\nabla P_N(x)}\geqslant C\abs{x}^{N-1}$ fails, and thus, Lemma~\ref{aux} can not be used. As is shown in~\cite[Appendix~E]{BM}, the latter hypothesis is necessary for the conclusion of Lemma~\ref{aux} to hold.

{\small

}

\end{document}